\documentclass[11pt, a4paper, UKenglish]{amsart}

\usepackage{amsmath,amssymb,amsfonts,amsthm}
\usepackage[utf8]{inputenc}
\usepackage{hyperref}
\usepackage{graphicx}
\usepackage{tikz-cd}
\usepackage[margin=1.2 in]{geometry}

\graphicspath{{figures/}}

\usepackage[style=alphabetic,sorting=nyt,maxbibnames=7,maxcitenames=5,block=space,backend=biber]{biblatex}
\addbibresource{biblio.bib} 

\usepackage{enumerate}

\newtheorem{thm}{Theorem}
\numberwithin{thm}{section}
\numberwithin{figure}{section}

\newtheorem{prop}[thm]{Proposition}
\newtheorem{lem}[thm]{Lemma}
\newtheorem{cor}[thm]{Corollary}

\theoremstyle{definition}
\newtheorem{dfn}[thm]{Definition}

\newtheorem{quest}[thm]{Question}
\newtheorem*{claim}{Claim}

\DeclareMathOperator{\h}{H}

\DeclareMathOperator{\TT}{T}

\newcommand{\bd}{\partial}
\newcommand{\cutalong}{\backslash\!\!\backslash}
\newcommand{\into}{\hookrightarrow}

\newcommand{\st}{\,|\,}
\newcommand{\iso}{\cong}
\renewcommand{\SS}{\mathbb{S}}
\newcommand{\DD}{\mathbb{D}}
\newcommand{\C}{\mathrm{C}}
\newcommand{\RR}{\mathbb{R}}
\newcommand{\QQ}{\mathbb{Q}}
\newcommand{\CC}{\mathbb{C}}
\newcommand{\NN}{\mathbb{N}}
\newcommand{\ZZ}{\mathbb{Z}}
\newcommand{\sub}{\subseteq}

\renewcommand{\S}{\mathcal{S}}
\newcommand{\Sd}{\mathcal{S}^\dagger}

\DeclareMathOperator{\V}{V} 
\DeclareMathOperator{\ess}{ess} 

\newcommand{\T}{\mathcal{T}}
\newcommand{\Td}{\mathcal{T}^\dagger}

\newcommand{\dSd}{d_\S^\dagger}
\newcommand{\dTd}{d_\T^\dagger}
\newcommand{\dS}{d_\S}
\newcommand{\dT}{d_\T}

\renewcommand{\tilde}{\widetilde}

\newcommand{\tautwo}{\ensuremath{\tau^{(2)}}}

\newcommand{\orientedsum}{\ensuremath{
		\mathchoice{\,\raisebox{-0.06em}{\includegraphics[scale=0.4]{orientedsumsymbol.pdf}}}
		{\kern 0.1em\raisebox{-0.06em}{\includegraphics[scale=0.4]{orientedsumsymbol.pdf}}\kern 0.1em}
		{\raisebox{-0.06em}{\includegraphics[scale=0.28]{orientedsumsymbol.pdf}}}
		{\raisebox{-0.06em}{\includegraphics[scale=0.28]{orientedsumsymbol.pdf}}}}
}

\makeatletter
\@namedef{subjclassname@2020}{%
  \textup{2020} Mathematics Subject Classification}
\makeatother

\begin{document}

\title{The complex of hypersurfaces in a homology class}

\author{Gerrit Herrmann}
\address{Fakult\"{a}t f\"{u}r Mathematik, Universit\"{a}t Regensburg, 93040 Regensburg, Germany}
\email{herrmann.gerrit@gmail.com}

\author{José Pedro Quintanilha}
\address{Fakultät für Mathematik, Universität Bielefeld, Postfach 100131, D-33501 Bielefeld, Germany}
\email{zepedro.quintanilha@gmail.com}

\date{\today.} 

\thanks{This work was supported by the CRC~1085 \emph{Higher Invariants}
  (Universit\"at Regensburg, funded by the DFG)}
  


\begin{abstract}
	For a compact oriented smooth $n$-manifold~$M$ and a codimension-$1$ homology class $\phi \in \h_{n-1}(M, \bd M)$, we investigate a simplicial complex~$\Sd(M, \phi)$ relating the properly embedded hypersurfaces in~$M$ representing~$\phi$. Its definition is akin to that of other classical complexes, such as the curve complex of a surface or the Kakimizu complex of a knot, with the difference that hypersurfaces are not taken up to isotopy.
	
	We prove that $\Sd(M, \phi)$~is connected and simply connected in every dimension~$n$. We also show connectedness of a similar complex $\Td(M, \phi)$ adapted to the $3$-dimensional case, where only Thurston norm-realizing surfaces are considered. The connectedness results are transported to the complexes $\S(M, \phi), \T(M, \phi)$ where hypersurfaces are taken up to isotopy, and for $n=2$ the simple connectedness result carries over as well. We also briefly discuss extensions to a context studied by Turaev, where regular graphs in $2$-complexes are used to represent $1$-dimensional cohomology classes.
	
	We finish with two applications: we give an alternative proof of the fact that all Seifert surfaces for a fixed knot in a rational homology sphere are tube-equivalent, and we use connectedness of~$\Td(M, \phi)$ to define a new $\ell^2$-invariant of $2$-dimensional homology classes in irreducible and boundary-irreducible oriented compact connected $3$-manifolds with empty or toroidal boundary.
\end{abstract}
\maketitle

\section{Introduction}

\subsection{Connecting homologous hypersurfaces}

Given a smooth manifold~$M$, by a \textbf{submanifold} of~$M$ we mean a smooth manifold~$S$ contained in~$M$, whose inclusion~$S \into M$ is a smooth embedding transverse to the boundary~$\partial M$. We will say that~$S$ is \textbf{properly embedded} if $S\cap \bd M = \bd S$. When its codimension $\dim M - \dim S$ equals~$1$, we call~$S$ a \textbf{hypersurface}.

For an oriented compact smooth manifold~$M$ of dimension~$n$ and a codimension-$1$ (integral) homology class~$\phi\in \h_{n-1}(M,\bd M)$,  it is well-known that there is a properly embedded oriented compact hypersurface~$S\subset M$ \textbf{representing}~$\phi$, that is,
for which $\phi$~is the image of the fundamental class~$[S]$ under the inclusion-induced map~$\h_{n-1}(S, \bd S) \to \h_{n-1}(M, \bd M)$. (One can find such an~$S$ as the pre-image of a regular value of a smooth map~$M \to \SS^1$ classifying the Poincaré dual of~$\phi$.) In this article, we relate the various such embedded hypersurfaces, the first main theorem being the following:

\begin{thm}[Sequentially disjoint hypersurfaces]\label{thm:maintheorem}
	Let $M$~be an oriented compact smooth manifold of dimension~$n$, let $\phi\in \h_{n-1}(M, \bd M)$ be a codimension-$1$ homology class, and let $S,S'$~be properly embedded oriented hypersurfaces in~$M$ representing~$\phi$. Then there is a sequence~$S=S_0, S_1,\ldots, S_m=S'$ of properly embedded oriented hypersurfaces in~$M$, all representing~$\phi$, such that each two consecutive~$S_i$ are disjoint.
\end{thm}

Our proof produces the intermediate hypersurfaces~$S_i$ rather explicitly.
Namely, it will be clear that they can all be chosen to lie in an arbitrarily small neighborhood of the union~$S \cup S'$, and if $S,S'$~have disjoint boundaries, then for every~$S_i$, each connected component of~$\bd S_i$ is isotopic in~$\bd M$ to a component of $\bd S$~or~$\bd S'$. 

Theorem~\ref{thm:maintheorem} will be re-stated and proved as Theorem~\ref{thm.connected}. It is rephrased in the language of a simplicial complex~$\Sd(M, \phi)$, whose vertices are the hypersurfaces representing~$\phi$, and whose $k$-simplices are sets of $k+1$ pairwise-disjoint hypersurfaces (Definition~\ref{dfn.Sdagger}). With that terminology in place, Theorem~\ref{thm:maintheorem} is the statement that $\Sd(M, \phi)$~is connected.

\subsection{Decomposing oriented surgeries}

The overall idea of the proof of Theorem~\ref{thm:maintheorem} is to first replace $S$~and~$S'$ with hypersurfaces intersecting transversely, and then to perform a surgery procedure that yields a hypersurface~$\Sigma$ representing the class~$2 \phi$. The surface~$\Sigma$ is then the disjoint union of two hypersurfaces~$T_0, T_1$, each representing~$\phi$, and we can observe that at least one of the $T_i$~must have strictly fewer intersections with both $S$~and~$S'$ than $S$~and~$S'$ have with one another. The theorem then follows by an inductive argument.

In fact, our proof provides a finer control on the number of components in the intersection of the~$T_i$ with $S$~and~$S'$. This yields a linear upper bound on the distance between two transverse hypersurfaces~$S,S'$ in~$\Sd(M, \phi)$, in terms of the number of components in~$S \cap S'$ (Proposition~\ref{prop.distbound}). 

These ideas are further developed into a more technically involved argument showing the second main result of this paper, which is re-stated and proved in the text as Theorem~\ref{thm.simplyconn}:

\begin{thm}[Simple connectedness of~$\Sd(M, \phi)$]\label{thm:maintheorem2}
	Let $M$~be an oriented compact smooth manifold of dimension~$n$ and let $\phi\in \h_{n-1}(M, \bd M)$ be a codimension-$1$ homology class. Then $\Sd(M, \phi)$~is simply connected.
\end{thm}

We emphasize that both Theorems \ref{thm:maintheorem}~and~\ref{thm:maintheorem2} are provided with dimension-agnostic proofs. We are not aware of similar earlier results in dimensions above~$3$.

\subsection{Other complexes of hypersurfaces}

Similar complexes of hypersurfaces have been studied before, but traditionally their vertices are embedded submanifolds \emph{up to isotopy}. A classical example is the curve complex~$\mathcal C(M)$ of a surface~$M$. The vertices of~$\mathcal C(M)$ are the isotopy classes of (unoriented) simple closed curves in~$M$, and a finite set of isotopy classes is a simplex whenever those isotopy classes can be represented by pairwise-disjoint curves. The curve complex was introduced by Harvey \cite{Har81} and has proved a fruitful tool in the study of surface mapping class groups;  see for example the exposition in the book of Farb and Margalit \cite[Section~4.1]{FM12}. By fixing a primitive homology class $x\in \h_1(S)$  and taking the subcomplex~$\mathcal{C}_x(M)$ of~$\mathcal{C}(M)$ spanned by the isotopy classes of curves that can be oriented to represent~$x$, Hatcher and Margalit have given a new proof of the fact that the Torelli group of~$M$ is generated by certain easy to understand mapping classes \cite[Theorem~1]{HM12}. A key step is showing that if $M$~has genus at least~$3$, the complex~$\mathcal{C}_x(M)$ is connected \cite[Theorem~2]{HM12}. Irmer established various geometric properties of a variant of this complex where the vertices are homotopy classes of multicurves in a fixed homology class \cite{Ir12}.

Moving to dimension~$3$, Kakimizu has studied complexes of isotopy classes of Seifert surfaces in a link exterior that are incompressible, or that have minimal genus. For non-split links, Kakimizu proved that such complexes are connected \cite[Theorem~A]{Ka92}. Przytycki and Schultens have adapted his complex of minimal Seifert surfaces in a knot exterior to a wider class of $3$-manifolds and proved contractibility of the complex in that setting \cite[Theorem~1.1]{PS12}.

More recently, Bowden, Hensel and Webb considered a variant~$\mathcal{C}^\dagger(M)$ of the curve complex where the vertices are actual curves, rather than isotopy classes. This allowed them to show that the space of unbounded quasi-morphisms on~$\mathrm{Diff}_0(M)$ (for an orientable closed surface~$M$ of positive genus) is infinite-dimensional \cite[Theorem~1.2]{BHW20}. We follow their usage of the ``dagger'' superscript in the notation for our complex~$\Sd(M, \phi)$, to emphasize that hypersurfaces are not taken up to isotopy.


\subsection{Additional results}

When $M$~is assumed to have dimension~$3$ and to be irreducible and boundary-irreducible (Definition~\ref{dfn.irred}), it is particularly interesting to focus on surfaces that are ``efficient representatives'' of~$\phi$. Concretely, they are Thurston norm-realizing (Definition~\ref{dfn.thurstonnorm}). With that in mind, we will adjust the proof of Theorem~\ref{thm:maintheorem} to the subcomplex~$\Td(M,\phi)$ of~$\Sd(M,\phi)$ spanned by the vertices that realize the Thurston norm and have no homologically trivial parts (Definition~\ref{dfn.Tdagger}):

\begin{thm}[Connectedness of $\Td(M, \phi)$]\label{thm:3mfdcaseintro}
	Let $M$~be an irreducible and boundary-irreducible oriented compact smooth $3$-manifold, and let $\phi\in \h_2(M, \bd M)$. Then the complex~$\Td(M, \phi)$ is connected.
\end{thm}

This result is re-stated and proved as Theorem~\ref{thm.3mfdcase}.

We will also consider the simplicial complexes $\S(M,\phi)$~and~$\T(M,\phi)$ defined similarly to their ``dagger'' counterparts, but with all hypersurfaces taken up to smooth proper isotopy, and with a finite set of isotopy classes spanning a simplex if they can all be simultaneously disjointly realized (Definitions \ref{dfn.S}~and~\ref{dfn.T}).

\begin{thm}[Dropping the daggers]
	Let $M$~be an oriented compact smooth manifold of dimension~$n$, let $\phi\in \h_{n-1}(M, \bd M)$ be a codimension-$1$ homology class. Then:
	\begin{itemize}
		\item the complex~$\S(M,\phi)$ is connected (Corollary~\ref{cor.nodagger}),
		\item if $M$~is a reducible and boundary-irreducible $3$-manifold, then $\T(M, \phi)$~is connected (Corollary~\ref{cor.3mfdnodagger}),
		\item if $n=2$, then $\S(M,\phi)$ is simply connected (Corollary~\ref{cor.2mfdcase}).
	\end{itemize}
\end{thm}

The first two items will follow directly from previous results, with upper bounds on distance also inherited. The third one, however, requires an additional input available only in dimension~$2$ -- the Bigon Criterion (Theorem~\ref{thm.bigoncrit}). It is unclear to us whether it is possible to circumvent it and show simple connectedness of~$\S(M, \phi)$ in all dimensions. A sufficient condition would be an affirmative answer to Question~\ref{quest.MillionDollarQuest} below.

\subsection{Outline of the article}

In Section~\ref{sec.genpos}, we set up useful terminology for discussing collections of embedded submanifolds, and prove a lemma regarding general position.

Section~\ref{sec.conn} introduces the main character of this paper, the simplicial complex $\Sd(M, \phi)$ of a manifold~$M$ and a codimension-$1$ homology class~$\phi$ of~$M$. There, we prove the first main result, Theorem~\ref{thm.connected}. The core of the argument is Proposition~\ref{prop.distbound}, which gives a distance bound between vertices of~$\Sd(M, \phi)$ corresponding to transverse hypersurfaces.
In Section~\ref{sec.thurston}, we adjust these arguments to prove connectedness of the complex~$\Td(M, \phi)$ of Thurston norm-realizing surfaces in a reducible and boundary-irreducible oriented compact smooth $3$-manifold~$M$ (Theorem~\ref{thm.3mfdcase}).

The second main result, that $\Sd(M, \phi)$~is simply connected, is presented in Section~\ref{sec.simpcon}, as Theorem~\ref{thm.simplyconn}. The proof is similar in spirit to that of connectedness, but more technically involved.

In Section~\ref{sec.dropdagger}, we consider the complexes $\S(M, \phi)$~and~$\T(M, \phi)$ where all hypersurfaces are taken up to smooth proper isotopy. We explain how connectedness of $\Sd(M, \phi)$ and $\Td(M,\phi)$ implies connectedness of these smaller complexes, with similar upper bounds on distance (Corollaries \ref{cor.nodagger}~and~\ref{cor.3mfdnodagger}). We also transfer the simple connectedness result for~$\Sd(M, \phi)$ to $\S(M,\phi$), in the case where $M$~has dimension~$2$ (Corollary~\ref{cor.2mfdcase}).

In Section~\ref{sec.turaev}, we briefly comment on a generalization of the results in the $2$-dimensional case to the setting of graphs embedded in $2$-dimensional CW-complexes (under certain regularity assumptions). This connects to work of Turaev \cite{Tu02}, who has used such graphs to represent $1$-dimensional cohomology classes, and defined an analogue of the Thurston norm.

We finish by presenting a pair of applications of our results to $3$-manifold topology. Namely, in Section~\ref{sec.seifert} we give an alternative proof of the classical theorem that all Seifert surfaces for a knot in a rational homology $3$-sphere are tube-equivalent (Theorem~\ref{thm.tubeq}), and in Section~\ref{sec:l2torsion} we explain how connectendess of the complex~$\Td(M, \phi)$ has been used to construct an $\ell^2$-invariant for $2$-dimensional homology classes in irreducible and boundary-irreducible compact oriented connected smooth $3$-manifolds with empty or toroidal boundary (Corollary~\ref{cor.l2invariant}).

\subsection{Acknowledgments}
The authors are grateful to Stefan Friedl and Clara Löh for valuable feedback on earlier versions of the manuscript, to Vladimir Turaev for pointing out the connection to his work discussed in Section~\ref{sec.turaev}, to an anonymous referee who gave an extrmely constructive report on an earlier version of the text, and to Jonathan Glöckle for several illuminating conversations about transversality of smooth submanifolds.

\section[General position]{A note on general position}\label{sec.genpos}

The proofs of all main results in this article involve performing geometric constructions on families of submanifolds of a fixed smooth ambient manifold~$M$. These procedures can only be carried out if the submanifolds involved satisfy a ``general position'' assumption, which we explain in the current section. The reader who is uninterested in the technical details involved in perturbing manifolds into general position is invited to read only until the statement of Proposition~\ref{prop.transverse}, and then skip to the next section. 

Wall's book \cite[Section~1.5]{Wa16} is the main reference for all definitions in differential topology that are not explicitly stated here. We deviate from his terminology only in the use of the word ``\textbf{proper}'': for us, it always refers to the condition $S\cap \bd M = \bd S$ on a submanifold $S \subset M$, or, more generally, to the condition $f^{-1} (\partial M) = \partial N$ on a smooth map of manifolds $f\colon N \to M$. Since we will deal exclusively with compact spaces, the alternative notion of properness \cite[Section~A.2]{Wa16} will not be needed.

We now introduce a notion of transversality for finite sets of properly embedded submanifolds in an ambient manifold.

\begin{dfn}\label{dfn.transverse}
	Let $M$ be a smooth manifold. A finite set~$\mathcal U = \{S_1, \ldots, S_k\}$ of proper submanifolds of~$M$ is \textbf{transverse} if
	for every subset $I \subset \{1, \ldots,k\}$, the intersection~$\bigcap_{i\in I} S_i$ is a submanifold of~$M$, and for every pair of disjoint subsets~$I,J \sub \{1, \ldots, k\}$, the ubmanifolds $\bigcap_{i\in I} S_i$~and~$\bigcap_{j \in J} S_j$ are transverse.
\end{dfn}

Note that since submanifolds are by definition transverse to the boundary, Definition~\ref{dfn.transverse} requires in particular that all intersections~$\bigcap_{i\in I} S_i$ be transverse to~$\bd M$. It then follows also that $\bigcap_{i\in I} S_i$~is properly embedded: indeed, one can see in general that for any submanifold~$T$ of~$M$, the fact that $T$~is transverse to~$\bd M$ implies that $T \cap \bd M \subseteq \bd T$, so in particular $\big(\bigcap_{i\in I} S_i\big) \cap \bd M \subseteq \bd \big(\bigcap_{i\in I} S_i\big)$. Conversely, each point $p \in \bd \big(\bigcap_{i\in I} S_i\big)$ is in the boundary of some~$S_i$, and hence, since $S_i$~is properly embedded, we have $p\in \bd M$, and so~$p \in \big(\bigcap_{i\in I} S_i\big) \cap \bd M$.

Figure~\ref{fig.nottransverse} exemplifies two phenomena that Definition~\ref{dfn.transverse} excludes.

\begin{figure}[h]
	\centering
	\def \svgwidth{0.7\linewidth}
	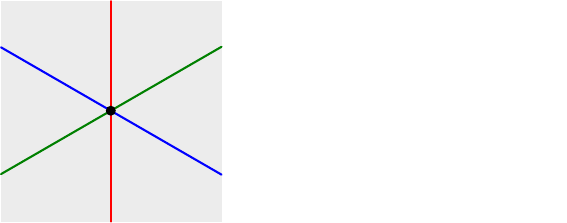
	\caption{Submanifolds of a surface~$M$ that do not form a transverse set. Left: the curves~$S_1, S_2, S_3$ meet pairwise-transversely at a point, but the intersection~$S_1 \cap S_2$ is not transverse to~$S_3$. Right: for two curves~$S_1, S_2$ meeting transversely at~$\bd M$, the intersection~$S_1 \cap S_2$ is not transverse to~$\bd M$, so it is not a submanifold of~$M$.}
	\label{fig.nottransverse}
\end{figure}

The goal of the current section is to establish the following statement, which justifies thinking of transverse sets as being in general position.

\begin{prop}[Transverse approximation]\label{prop.transverse}
	Suppose $\mathcal U$ is a transverse set of properly embedded submanifolds of a compact smooth manifold~$M$, and let~$f\colon T \into M$ be a proper embedding of a compact manifold~$T$. Then $f$~can be perturbed by an arbitrarily small proper isotopy to a proper embedding~$g\colon T \into M$, such that for the modified manifold~~$T':=g(T)$, the set~$\mathcal U \cup  \{T'\}$ is transverse.
\end{prop}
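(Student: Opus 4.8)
The plan is to reduce the assertion to ordinary transversality of~$f$ against a fixed finite list of submanifolds of~$M$, to achieve that by the standard genericity theorems for transversality of maps, and finally to upgrade the resulting small perturbation to a small proper isotopy. At the outset I would fix a collar $\bd M\times[0,1)\sub M$ adapted to~$\mathcal U$, so that each~$S_i$ — and hence each of the iterated intersections below — meets it in a product; after a further small proper isotopy straightening~$f$ near~$\bd M$ inside that collar, we may also assume that $f(T)$ meets it in the product $f(\bd T)\times[0,1)$. With this normalization, every transversality question near~$\bd M$ becomes equivalent to the corresponding question for the boundary submanifolds inside the closed manifold~$\bd M$, so the interior and the boundary can be treated independently.

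\emph{Step 1 (reduction).} For each subset~$I$ of the index set of~$\mathcal U$ put $N_I := \bigcap_{i\in I}S_i$ (with $N_\emptyset := M$); by hypothesis these are finitely many properly embedded submanifolds, with $\bd N_I = \bigcap_{i\in I}\bd S_i$. I claim it suffices to perturb~$f$ to a proper embedding~$g$ such that $T' := g(T)$ is a submanifold of~$M$ transverse to every~$N_I$ (equivalently, by the collar reduction, transverse to each~$N_I$ in the interior and with $\bd T'$ transverse to each~$\bd N_I$ in~$\bd M$). Granting this, let $I,J$ be disjoint subsets of the index set of $\mathcal U\cup\{T'\}$. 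If the index of~$T'$ lies in neither, transversality of $\bigcap_I(\cdot)$ and $\bigcap_J(\cdot)$ is part of the hypothesis on~$\mathcal U$. Otherwise write $J = J'\sqcup\{T'\}$: that $T'\cap N_{J'}$ is a submanifold follows from $T'$ being transverse to~$N_{J'}$, and at a point $p\in T'\cap N_{J'}\cap N_I = T'\cap N_{I\cup J'}$ (using $I\cap J' = \emptyset$) the identities $T_pT' + T_pN_{J'} = T_pM$, $T_pT' + T_pN_{I\cup J'} = T_pM$, $T_pN_I + T_pN_{J'} = T_pM$, together with $T_pN_{I\cup J'} = T_pN_I\cap T_pN_{J'}$ (the last two from $\mathcal U$ being transverse), force by a dimension count $T_p(T'\cap N_{J'}) + T_pN_I = (T_pT'\cap T_pN_{J'}) + T_pN_I = T_pM$; this is exactly transversality of~$N_I$ and~$T'\cap N_{J'}$. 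The same computation inside~$\bd M$ handles the boundary intersections, so $\mathcal U\cup\{T'\}$ is transverse.

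\emph{Step 2 (genericity).} For a single properly embedded submanifold $N\sub M$, the set of proper embeddings $T\into M$ transverse to~$N$ (in the manifold-with-boundary sense) is open and dense in the space of all proper embeddings, for the $C^\infty$ topology: density is the Thom transversality theorem in the form valid for manifolds with boundary (see~\cite[Section~1.5]{Wa16}), combined with the fact that embeddings form an open subset of all smooth maps out of the compact manifold~$T$, while openness of the transversality condition uses compactness of both~$T$ and~$N$. Applying this to each of the finitely many~$N_I$, and to~$\bd M$ itself (to guarantee that~$T'$ is a submanifold), and intersecting the corresponding open dense sets, we obtain a proper embedding~$g$ arbitrarily $C^\infty$-close to~$f$ with the property required in Step~1.

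\emph{Step 3 (from perturbation to isotopy), and the main difficulty.} A proper embedding~$g$ sufficiently $C^1$-close to the proper embedding~$f$ is joined to~$f$ by a proper isotopy of arbitrarily small $C^0$-size — for instance the fibrewise straight-line isotopy in a tubular neighborhood of~$f(T)$ chosen compatibly with the collar of~$\bd M$, which is an embedding at every time once~$g$ is close enough — and this gives the claimed perturbation. I expect the genuinely delicate point to be not the linear algebra of Step~1 but the boundary bookkeeping running through all three steps: every perturbation must be carried out within the class of \emph{proper} embeddings, and transversality to~$\bd M$ must be preserved along the isotopy and acquired at its end. This is exactly what the collar normalization at the outset is designed to control, by splitting each transversality problem into an interior problem and a problem inside the closed manifold~$\bd M$.
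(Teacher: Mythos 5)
Your proposal is essentially the paper's proof, with the same three components: a linear-algebraic reduction showing it suffices to make $T'$ transverse to each iterated intersection $N_I$ (your Step~1 is the content of the paper's Lemma~\ref{lem.linalgtransversality} and Lemma~\ref{lem.transversalityreduction}, collapsed into a single dimension count), a genericity argument from transversality theorems, and an upgrade from a small perturbation to a small proper isotopy (your Step~3, which the paper isolates as Proposition~\ref{prop.embnbh}).

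The one place where you are looser than the paper is the transition in Step~2 from ``boundary transversality plus interior transversality'' to ``intersect finitely many open dense sets''. The paper handles this carefully with a two-stage argument: first perturb $f|_{\bd T}$ inside $\bd M$ so that $\bd T'$ is transverse to all $\bd N_I$ (ordinary Thom transversality in the closed manifold $\bd M$); then note that this already makes $\bd T'$ transverse to the $N_I$ \emph{in $M$}; and finally use the \emph{relative} version of the transversality theorem (maps with prescribed boundary values) to achieve interior transversality to the $N_I$ without disturbing the boundary. Your collar normalization is a perfectly good substitute for this relative-transversality step — inside a product collar, transversality of $T'$ to $N_I$ at boundary points becomes literally equivalent to transversality of $\bd T'$ to $\bd N_I$ in $\bd M$ — but you should be explicit that the interior perturbation must be carried out \emph{rel boundary} (or at least rel collar), since a generic $C^\infty$-small perturbation will not preserve the product structure you set up at the outset; this is precisely where the relative transversality theorem enters, however one packages it. You rightly flag this boundary bookkeeping as the delicate point, so I read this as an omission of detail rather than a gap in understanding.
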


Here the phrase ``arbitrarily small isotopy'' warrants some explanation. Given two smooth manifolds $T, M$, the set~$\C^\infty (T,M)$ of smooth maps $T\to M$ is typically endowed with either the $C^\infty$~topology or the $W^\infty$~topology, which are the same if $T$~is compact \cite[Appendix A.4]{Wa16}; we will thus no longer care to distinguish them. If we consider the subspace~$\C_\bd^\infty (T,M)$ of proper maps, we can make the statement of Proposition~\ref{prop.transverse} precise by expressing it in terms of this topology. This translation relies on the following result.

\begin{prop}[Stability of proper embeddings]\label{prop.embnbh}
	If $T,M$ are smooth manifolds, with $T$~compact, and $f\colon T\into M$~is a proper embedding, then there is a neighborhood~$U$ of~$f$ in~$\C_\bd^\infty (T,M)$ such that every~$g\in U$ is a proper embedding that is properly isotopic to~$f$.
\end{prop}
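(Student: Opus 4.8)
The plan is to deduce the statement from two classical facts about the $C^1$-topology on mapping spaces (compare \cite[Section~1.5]{Wa16} and standard references in differential topology): \textbf{(a)} proper embeddings form an open subset of $\C_\bd^\infty(T,M)$, and \textbf{(b)} every proper map sufficiently $C^1$-close to $f$ is properly isotopic to it. Since $T$ is compact, the $C^1$-topology on $\C_\bd^\infty(T,M)$ is coarser than its $C^\infty$-topology, so a $C^1$-neighborhood of $f$ is in particular a $C^\infty$-neighborhood, and it suffices to work at the $C^1$-level throughout.

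For (a): being an immersion is an open condition on $1$-jets, so by compactness of $T$ there is a $C^1$-neighborhood $U_1$ of $f$ consisting of immersions; transversality to $\bd M$ along $\bd T$ is likewise an open $1$-jet condition and $\bd T$ is compact, so after shrinking we may assume every $g\in U_1$ is transverse to $\bd M$. Every $g\in\C_\bd^\infty(T,M)$ already satisfies $g(\bd T)\sub\bd M$ and $g(T\setminus\bd T)\sub M\setminus\bd M$, so such a $g$ is a properly embedded submanifold as soon as it is injective, because an injective immersion of a compact manifold is an embedding (a continuous injection from a compact space to a Hausdorff space is a homeomorphism onto its image). Injectivity is stable under small perturbations by the usual compactness argument: if $g_n\to f$ with $g_n(x_n)=g_n(y_n)$ and $x_n\neq y_n$, then after passing to convergent subsequences $x_n\to x$, $y_n\to y$ one gets $f(x)=f(y)$, hence $x=y$; but maps $C^1$-close to $f$ are uniformly locally injective near $x$, a contradiction. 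This produces the neighborhood $U$ of the statement, up to one further shrinking in the next step.

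For (b): choose a Riemannian metric on $M$ that restricts to a product $\bd M\times[0,\varepsilon)$ on a collar of $\bd M$, so that short geodesics between nearby points of $\bd M$ remain in $\bd M$ and short geodesics between nearby points of $M\setminus\bd M$ remain in the interior. For $g$ sufficiently $C^0$-close to $f$, the map $F(x,t):=\exp_{f(x)}\!\bigl(t\cdot\exp_{f(x)}^{-1}(g(x))\bigr)$ is well-defined and smooth, with $F(\cdot,0)=f$ and $F(\cdot,1)=g$, and the product structure of the metric guarantees $F(\bd T\times[0,1])\sub\bd M$ and $F((T\setminus\bd T)\times[0,1])\sub M\setminus\bd M$, so that each $F(\cdot,t)$ is a proper map. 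Moreover $F(\cdot,t)$ is $C^1$-close to $f$ uniformly in $t$ once $g$ is $C^1$-close to $f$; so, shrinking $U$ so that this uniform $C^1$-distance keeps the whole family inside the open set from part~(a), every $F(\cdot,t)$ is a proper embedding and $F$ is the required proper isotopy. The main obstacle is precisely this boundary bookkeeping in (b): choosing the metric (equivalently, a tubular neighborhood of $f(T)$) compatibly with $\bd M$, and checking that the \emph{entire} one-parameter family $F(\cdot,t)$, not merely its endpoints, stays within the open set of proper embeddings --- this is what upgrades ``isotopic'' to ``properly isotopic''. Away from the boundary, everything is the standard tubular-neighborhood straight-line-homotopy argument.
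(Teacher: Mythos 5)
Your proposal is correct and takes essentially the same route as the paper's sketch: both reduce to the non-proper statement (openness of embeddings plus geodesic/exponential-map interpolation, as in Wall's Proposition 4.4.4) and then upgrade to a \emph{proper} isotopy by choosing a Riemannian metric compatible with $\bd M$. Your product metric on a collar is a particular instance of the paper's ``$\bd M$ totally geodesic'' choice, and your explicit formula $F(x,t)=\exp_{f(x)}\bigl(t\cdot\exp_{f(x)}^{-1}(g(x))\bigr)$ is the map the paper describes via Wall's $H\colon W\times[0,1]\to M$; you have merely spelled out the details that the paper delegates to the reference.
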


\begin{proof}[Proof sketch]
	If we do not insist that the isotopy connecting $f$~and~$g$ be proper, then this statement is proved in Wall's book \cite[Proposition~4.4.4]{Wa16}. But the stronger result actually follows from the same argument, with almost no modification. Indeed, that proof uses a map~$H\colon W \times [0,1] \to M$, where $W$~is an appropriate neighborhood of the diagonal in~$M\times M$. This map~$H$ is constructed by putting a Riemannian metric on~$M$ and using the existence of unique geodesics between pairs of points that are close enough.
	
	But if one starts with a Riemannian metric for which $\bd M$~is totally geodesic (which we can do \cite[Proposition~2.3.7~(i)]{Wa16}), then geodesics connecting boundary points are contained in the boundary, and this fact translates into properness of the isotopy that is ultimately produced between $f$~and~$g$.
\end{proof}

One can therefore refine Proposition~\ref{prop.transverse} as follows.

\begin{prop}[Denseness of transverse proper embeddings]\label{prop.transverse'}
	Let $\mathcal U$~be a transverse set of properly embedded submanifolds of a compact smooth manifold~$M$, and let $T$~be a compact smooth manifold. Then the set of proper embeddings~$f\colon T \into M$ making $\mathcal U \cup \{f(T)\}$~transverse
	is dense in the (open) subset of~$\C_\bd^\infty (T, M)$ consisting of proper embeddings.
\end{prop}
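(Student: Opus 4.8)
The plan is to strip $f(T)$ off~$\mathcal U$ by reducing the required transversality to a \emph{finite} list of ordinary transversality conditions against fixed submanifolds of~$M$, and then to invoke Thom's transversality theorem. For $I\subseteq\{1,\ldots,k\}$ write $S_I:=\bigcap_{i\in I}S_i$, with $S_\emptyset=M$; by hypothesis each $S_I$ is a compact properly embedded submanifold, for disjoint $I,J$ one has $S_I\pitchfork S_J$ and $S_I\cap S_J=S_{I\cup J}$, and the $\bd S_I=\bigcap_{i\in I}\bd S_i$ form the corresponding transverse family in the closed manifold~$\bd M$. The reduction I have in mind is the equivalence: a proper embedding $g\colon T\into M$ makes $\mathcal U\cup\{g(T)\}$ transverse \emph{if and only if}, for every $I\subseteq\{1,\ldots,k\}$, the submanifold $g(T)$ is transverse to~$S_I$ in~$M$ and $g(\bd T)$ is transverse to~$\bd S_I$ in~$\bd M$. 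The ``only if'' direction is immediate from Definition~\ref{dfn.transverse}: apply it to the pair consisting of the singleton index of~$g(T)$ and the subset~$I$, then pass to~$\bd M$. For ``if'', after possibly interchanging the two index sets, the conditions of Definition~\ref{dfn.transverse} for $\mathcal U\cup\{g(T)\}$ not already subsumed in ``$\mathcal U$ is transverse'' amount to the following, for disjoint $I_0,J_0\subseteq\{1,\ldots,k\}$: that $g(T)\cap S_{I_0}$ be a properly embedded submanifold --- which holds as soon as $g(T)$ is transverse to~$S_{I_0}$ --- and that this submanifold be transverse to~$S_{J_0}$.

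Proving this ``if'' direction is where general position of~$\mathcal U$ genuinely enters, and I expect its bookkeeping, rather than any real difficulty, to be the most tedious thing to write out. The statement is pointwise: at $x\in g(T)\cap S_{I_0}\cap S_{J_0}=g(T)\cap S_{I_0\cup J_0}$, put $G=T_xg(T)$, $A=T_xS_{I_0}$, $B=T_xS_{J_0}$ inside $V=T_xM$. Transversality of~$\mathcal U$ gives $A+B=V$ and $A\cap B=T_xS_{I_0\cup J_0}$; applying the enforced conditions to the subsets $I_0$, $J_0$, $I_0\cup J_0$ gives $G+A=V$, $G+B=V$, $G+(A\cap B)=V$; and transversality of $g(T)$ to~$S_{I_0}$ gives $T_x\bigl(g(T)\cap S_{I_0}\bigr)=G\cap A$. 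Plugging these into the identity $\operatorname{codim}(X+Y)=\operatorname{codim}X+\operatorname{codim}Y-\operatorname{codim}(X\cap Y)$ with $X=G\cap A$ and $Y=B$ --- and using $X\cap Y=G\cap(A\cap B)$ together with $A\pitchfork B$ --- makes the right-hand side vanish, so $(G\cap A)+B=V$, the asserted transversality to~$S_{J_0}$. The corresponding transversality statements along~$\bd M$ (that $g(T)\cap S_{I_0}$ and $g(T)\cap S_{I_0\cup J_0}$ are transverse to~$\bd M$, and that the boundary submanifolds are transverse inside~$\bd M$) are obtained by running the same count inside~$\bd M$, using the enforced conditions $g(\bd T)\pitchfork\bd S_I$ and the fact that $g$, being a proper embedding, is transverse to~$\bd M$; properness of all the intersections then follows as usual.

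Granting the reduction, I would finish as follows. Write $\mathcal E\subseteq\C_\bd^\infty(T,M)$ for the set of proper embeddings, an open subset by Proposition~\ref{prop.embnbh}. For each~$I$, the set of proper maps $T\to M$ transverse in~$M$ to the compact submanifold~$S_I$, and the set of those whose restriction to~$\bd T$ is transverse in~$\bd M$ to~$\bd S_I$, are each dense in~$\C_\bd^\infty(T,M)$ --- this being Thom's transversality theorem in the proper, manifold-with-boundary setting --- and each open, the targets being compact. Hence their intersection $E'$ over the finitely many~$I$ is open and dense in~$\C_\bd^\infty(T,M)$, so $E'\cap\mathcal E$ is open and dense in~$\mathcal E$. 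By the reduction, $E'\cap\mathcal E$ is exactly the set of proper embeddings~$f$ making $\mathcal U\cup\{f(T)\}$ transverse, which is therefore dense in~$\mathcal E$, as required.

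The step I expect to be the real obstacle is the proper-category form of Thom transversality used here: the textbook statements concern $\C^\infty(T,M)$, whereas now one must stay inside~$\C_\bd^\infty(T,M)$. I would establish it by the classical parametric argument, adapted to the boundary in the same spirit as the sketch of Proposition~\ref{prop.embnbh}: fix a Riemannian metric on~$M$ for which~$\bd M$ is totally geodesic, perturb a given proper map~$f$ via $p\mapsto\exp_{f(p)}\bigl(s(p)\bigr)$ for small sections~$s$ of~$f^*TM$ tangent to~$\bd M$ along~$\bd T$ (so geodesics issuing from~$\bd M$ remain in~$\bd M$ and properness is preserved), choose a finite-dimensional family of such sections spanning $T_{f(p)}M$ for~$p$ in the interior of~$T$ and $T_{f(p)}\bd M$ for~$p\in\bd T$, and apply Sard's theorem to the evaluation map $\RR^N\times T\to M$; since properness sends~$\bd T$ into~$\bd M$, transversality over the interior of~$T$ and transversality along~$\bd M$ --- a transversality assertion in the \emph{closed} manifold~$\bd M$ --- combine to transversality with boundary. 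A more conceptual alternative would be to replace ``transverse to every~$S_I$ and~$\bd S_I$'' by ``transverse to the Whitney stratification of~$M$ induced by $\{S_1,\ldots,S_k\}$ and~$\bd M$'', which exists because the~$S_i$ are in general position; the ``if'' direction of the reduction then becomes the standard fact that a submanifold transverse to every stratum meets the stratified set, and each of its intersection strata, cleanly.
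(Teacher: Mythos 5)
Your argument is correct, and its skeleton agrees with the paper's: reduce ``$\mathcal U\cup\{f(T)\}$ transverse'' to the finite list of conditions $f(T)\pitchfork S_I$ in $M$ and $f(\bd T)\pitchfork\bd S_I$ in~$\bd M$ (this is Lemma~\ref{lem.transversalityreduction}), prove the reduction by elementary linear algebra (your codimension count is precisely Lemma~\ref{lem.linalgtransversality}), and then appeal to genericity of transversality. Where you part ways from the paper is in how the density step is organized. You treat the two families of conditions as independently open-dense subsets of $\C_\bd^\infty(T,M)$, intersect them, and accept that you must prove a proper-category Thom transversality theorem from scratch, which your parametric/exponential-map sketch does supply. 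The paper instead sidesteps the need for such a theorem by a two-step perturbation: first make $\iota|_{\bd T}$ transverse to all $\bd S_I$ \emph{inside the closed manifold $\bd M$} (ordinary Thom transversality suffices there); then observe that transversality to $\bd S_I$ in $\bd M$ already forces transversality to $S_I$ in $M$ along $\bd T$ (you use the same fact, slightly buried, when you say interior transversality and boundary transversality ``combine''); and finally perturb $f_0$ \emph{relative to $\bd T$} using the second, constrained form of the Elementary Transversality Theorem (Theorem~\ref{thm.elementtransv}) cited from Wall. Your route is more self-contained but asks you to re-derive a proper-category version of the transversality theorem; the paper's route is a bit more bookkeeping but only uses the two off-the-shelf forms already stated. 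Both are sound, and the gap you flag as ``the real obstacle'' is exactly the place where the paper's two-step organization earns its keep.
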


The main tool one uses in order to establish statements of this type is Thom's Transversality Theorem \cite[Theorem~4.5.6]{Wa16}. We will not need its full power, only the following corollary.

\begin{thm}[Elementary Transversality Theorem]\label{thm.elementtransv}
	Let $T,M$ be smooth manifolds, with $T$~compact, and let $S$~be a closed submanifold of~$M$. Then the set of maps~$f \colon T \to M$~transverse to~$S$ is open and dense in~$\C^\infty (T, M)$.
	
	Suppose further that $f_0 \colon T \to M$~is a smooth map such that the restriction $f_0|_{\bd T}$ is transverse to~$S$, and consider the subspace $\C^\infty (T, M; f_0, \bd T) \sub \C^\infty (T, M)$ of maps whose restriction to~$\bd T$ agrees with~$f_0$. Then the set of maps $f \in \C^\infty (T, M; f_0, \bd T)$ transverse to~$S$ is open and dense in~$\C^\infty (T, M; f_0, \bd T)$.
\end{thm}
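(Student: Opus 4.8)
The plan is to deduce the theorem from Thom's Transversality Theorem \cite[Theorem~4.5.6]{Wa16} by recognizing transversality to~$S$ as a jet-transversality condition for $0$-jets. Recall that $J^0(T,M) = T\times M$, and that the $0$-jet extension of a smooth map $f\colon T\to M$ is $j^0 f\colon T\to T\times M$, $x\mapsto(x,f(x))$. Put $W := T\times S\sub T\times M$; since $S$~is a closed submanifold of~$M$, the set~$W$ is a closed submanifold of~$J^0(T,M)$. A short tangent-space computation — using $d(j^0 f)_x(v) = (v, df_x(v))$ and $\TT_{(x,f(x))}W = \TT_x T\times\TT_{f(x)}S$ — shows that $j^0 f$~is transverse to~$W$ exactly when $f$~is transverse to~$S$. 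With this translation in hand, most of the theorem is a direct restatement of Thom's.

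For the first assertion, Thom's Transversality Theorem applied to~$W\sub J^0(T,M)$ gives that the set of $f\in\C^\infty(T,M)$ with $j^0 f$ transverse to~$W$, i.e. with $f$ transverse to~$S$, is dense. Openness I would prove directly: if $f$~is transverse to~$S$, then $f^{-1}(S)$ is compact (as $T$~is compact and $S$~closed); transversality at a point is an open condition on the $1$-jet there, so finitely many coordinate patches around points of $f^{-1}(S)$ exhibit transversality of all sufficiently $C^1$-close maps on a neighborhood~$N$ of $f^{-1}(S)$, while on the compact set $T\setminus N$ the map~$f$ avoids the closed set~$S$ and hence so does every sufficiently $C^0$-close map. (If one's convention for transversality of a map out of a manifold with boundary also demands transversality of the boundary restriction, the same argument applied to $f|_{\bd T}$ covers that clause.)

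For the relative assertion, openness is formal — the transverse maps in $\C^\infty(T,M;f_0,\bd T)$ are the intersection of that subspace with the open set of transverse maps in $\C^\infty(T,M)$. For density I would first note the elementary fact that any $g\colon T\to M$ with $g|_{\bd T}$ transverse to~$S$ is automatically transverse to~$S$ on a neighborhood of~$\bd T$ in~$T$: at $x\in\bd T$ with $g(x)\in S$ this follows from $\TT_x\bd T\sub\TT_x T$ together with the openness of pointwise transversality, and at $x\in\bd T$ with $g(x)\notin S$ it holds because $S$~is closed. Applying this to an arbitrary $f\in\C^\infty(T,M;f_0,\bd T)$, whose restriction to~$\bd T$ is the transverse map $f_0|_{\bd T}$, we see that $f$ is already transverse to~$S$ near~$\bd T$. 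Feeding this into the relative form of Thom's Transversality Theorem — perturbation rel a neighborhood of a closed subset of~$T$ on which jet-transversality already holds — with the closed subset~$\bd T$, we obtain a map~$g$ arbitrarily close to~$f$, agreeing with~$f$ (hence with~$f_0$) near~$\bd T$, with $j^0 g$ transverse to~$W$ everywhere; thus $g\in\C^\infty(T,M;f_0,\bd T)$ is transverse to~$S$, giving density.

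The one point requiring attention is the precise form in which Thom's theorem is available: if \cite[Theorem~4.5.6]{Wa16} is stated only in its absolute form, I would derive the relative version from it by the usual localization device — choose a collar $\bd T\times[0,1)\into T$ and a bump function vanishing near~$\bd T$ and equal to~$1$ off a slightly larger neighborhood, then apply parametric transversality to the resulting finite-dimensional family of maps — which is routine and changes nothing essential. I do not anticipate a genuine obstacle anywhere: the substance of the theorem is carried entirely by the $0$-jet reduction together with an invocation of Thom's theorem, with the openness claim and the boundary bookkeeping added by hand.
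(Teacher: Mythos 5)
Your proposal is correct and takes essentially the same route the paper intends: the paper presents this statement as a corollary of Thom's Transversality Theorem, citing \cite[Corollary~4.12]{GG86} for the first part and the relative form \cite[Proposition~4.5.7]{Wa16} for the second, which is precisely the $0$-jet reduction plus perturbation-rel-a-neighborhood-of-$\bd T$ argument you spell out. The details you add by hand (openness via compactness of $f^{-1}(S)$ and closedness of~$S$, and the propagation of transversality from $\bd T$ to a neighborhood of~$\bd T$) are correct and match what those references supply.
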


The proof of the first part of the Elementary Transversality Theorem can be found in the book by Golubitsky and Guillemin \cite[Corollary~4.12]{GG73}, and the second statement follows from a stronger version of Thom's Transversality Theorem \cite[Proposition~4.5.7]{Wa16}, using the same argument.



Before proving Proposition~\ref{prop.transverse'}, we state and prove two lemmas, the first of which is a mere linear-algebraic observation.

\begin{lem}[3-fold transversality]\label{lem.linalgtransversality}
	Let $V$~be a finite-dimensional vector space (over any field), and let $T,S,R$~be pairwise transverse subspaces of~$V$, that is, $T+S=T+R =S+R = V$. Then the following conditions are equivalent:
	\begin{itemize}
		\item $T + (S \cap R) = V$,
		\item $S + (T \cap R) = V$,
		\item $R + (T \cap S) = V$.
	\end{itemize}
\end{lem}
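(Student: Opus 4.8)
The plan is to reduce all three conditions to one and the same symmetric numerical identity, so that their equivalence becomes manifest. The key observation is that each of the three subspaces $T + (S\cap R)$, $S + (T\cap R)$, $R + (T\cap S)$ has the \emph{same} dimension, namely
\[
\dim T + \dim S + \dim R - \dim V - \dim(T\cap S\cap R).
\]
Since each of them is a subspace of~$V$, it equals~$V$ exactly when this common quantity equals~$\dim V$; this immediately yields the claimed equivalence.

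To establish the displayed formula for, say, $T + (S\cap R)$, I would first use transversality of the pair~$S,R$: from $S+R=V$ and the Grassmann dimension formula, $\dim(S\cap R) = \dim S + \dim R - \dim V$. Then I would apply the dimension formula again to the sum $T + (S\cap R)$, using that $T\cap(S\cap R) = T\cap S\cap R$, to get
\[
\dim\bigl(T + (S\cap R)\bigr) = \dim T + \dim(S\cap R) - \dim(T\cap S\cap R),
\]
and substitute the previous expression for $\dim(S\cap R)$. The computations for $S + (T\cap R)$ and for $R + (T\cap S)$ are word-for-word the same, except that they invoke the transversality of the pairs~$T,R$ and~$T,S$ respectively; this is precisely where the three pairwise-transversality hypotheses each get used exactly once. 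In every case one lands on the same symmetric expression in $\dim T$, $\dim S$, $\dim R$, $\dim V$ and $\dim(T\cap S\cap R)$, which completes the proof.

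I do not expect a genuine obstacle here: the statement is a bookkeeping exercise with the dimension formula for sums of subspaces. The only two points requiring a little care are to record which transversality hypothesis feeds into which of the three identities, and to remember explicitly that a subspace of~$V$ equals~$V$ if and only if it has dimension~$\dim V$, so that one may pass freely between the subspace equalities in the statement and the single numerical identity that drives the argument.
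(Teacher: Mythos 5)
Your proof is correct and is essentially the paper's own argument: both boil the three conditions down to the single symmetric identity $\dim T + \dim S + \dim R - \dim(T\cap S\cap R) = 2\dim V$ via the Grassmann dimension formula, with the three pairwise-transversality hypotheses each consumed once. You have merely spelled out the "straightforward dimension count" that the paper leaves implicit.
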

\begin{proof}
	A straightforward dimension count shows that each condition is equivalent to
	\[ \dim T + \dim S + \dim R - \dim (T \cap S \cap R) =  2 \dim V. \qedhere\]
\end{proof}

\begin{lem}[Transversality criterion]\label{lem.transversalityreduction}
	Let $\mathcal{U}:=\{S_1, \ldots, S_k\}$ be a transverse set of properly embedded submanifolds of a compact smooth manifold~$M$, and let $T$~be a properly embedded submanifold of~$M$ such that for every non-empty subset~$I \sub \{1, \ldots, k\}$ the following conditions hold:
	\begin{itemize}
		\item $T$ is transverse to~$\bigcap_{i\in I} S_i$, and
		\item $\bd T$ is transverse to~$\bigcap_{i \in I} \bd S_i$ in~$\bd M$.
	\end{itemize}
	Then $\mathcal U \cup \{T\}$~is a transverse set.
\end{lem}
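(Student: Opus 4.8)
The plan is to verify Definition~\ref{dfn.transverse} directly for the set~$\mathcal U\cup\{T\}$, which I index by $\{0,1,\dots,k\}$ with $S_0:=T$. Given disjoint $I,J\sub\{0,\dots,k\}$, if $0\notin I\cup J$ then transversality of $\bigcap_{i\in I}S_i$ and $\bigcap_{j\in J}S_j$ is already part of the hypothesis that $\mathcal U$ is transverse, so I may assume $0\in I$ and write $I=\{0\}\sqcup I'$ with $I',J\sub\{1,\dots,k\}$ disjoint. The case $J=\varnothing$ is vacuous (one of the two intersections is all of~$M$), so I will take $J\ne\varnothing$; the value $I'=\varnothing$ is allowed, and is handled below by the convention that the empty intersection of subspaces of a vector space~$V$ equals~$V$.

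First I would record that the intersections involved are honest submanifolds. For non-empty $I''\sub\{1,\dots,k\}$, the submanifold $\bigcap_{i\in I''}S_i$ is properly embedded and transverse to~$\bd M$ because $\mathcal U$ is transverse, and the two bullet hypotheses together say exactly that $T$ is transverse to it in Wall's sense (i.e.\ including behaviour along~$\bd M$). Hence, by the reasoning recalled just after Definition~\ref{dfn.transverse} — together with the routine fact that a transverse intersection has tangent spaces equal to the intersections of the tangent spaces — $\bigcap_{i\in\{0\}\cup I''}S_i=T\cap\bigcap_{i\in I''}S_i$ is again a properly embedded submanifold transverse to~$\bd M$, with boundary $\bigcap_{i\in\{0\}\cup I''}\bd S_i$ and, at each of its points~$p$, with $\TT_p\bigl(\bigcap_{i\in\{0\}\cup I''}S_i\bigr)=\bigcap_{i\in\{0\}\cup I''}\TT_pS_i$ (and similarly for the boundaries inside~$\bd M$). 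Granting this, transversality of $\bigcap_{i\in I}S_i$ with $\bigcap_{j\in J}S_j$ unwinds into two pointwise linear conditions at each point~$p$ of their intersection: the identity $\TT_p\bigl(\bigcap_{i\in I}S_i\bigr)+\TT_p\bigl(\bigcap_{j\in J}S_j\bigr)=\TT_pM$, and, when $p\in\bd M$, its analogue with $\TT_p\bd M$ and the relevant boundaries in place of~$\TT_pM$ and the submanifolds.

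The core of the argument is then a pointwise application of Lemma~\ref{lem.linalgtransversality}. Fixing such a~$p$ and putting $V:=\TT_pM$, I would apply that lemma to the subspaces $A:=\TT_pT$, $B:=\bigcap_{i\in I'}\TT_pS_i$ and $C:=\bigcap_{j\in J}\TT_pS_j$ of~$V$, which equal the tangent spaces at~$p$ of $T$, of $\bigcap_{i\in I'}S_i$, and of $\bigcap_{j\in J}S_j$ respectively (by the previous paragraph and transversality of~$\mathcal U$). They are pairwise transverse in~$V$: $B+C=V$ because $\mathcal U$ is transverse, and $A+B=V$, $A+C=V$ by the first bullet applied to~$I'$ and to~$J$ (vacuously when $I'=\varnothing$, as then $B=V$). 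The first bullet applied to the non-empty set $I'\cup J$ gives $A+(B\cap C)=V$, so Lemma~\ref{lem.linalgtransversality} yields $C+(A\cap B)=V$; since $C=\TT_p\bigl(\bigcap_{j\in J}S_j\bigr)$ and $A\cap B=\TT_pT\cap\bigcap_{i\in I'}\TT_pS_i=\TT_p\bigl(\bigcap_{i\in I}S_i\bigr)$, this is exactly the interior identity. When $p\in\bd M$, the boundary identity follows by the same computation inside $V:=\TT_p\bd M$, now with $A:=\TT_p\bd T$, $B:=\bigcap_{i\in I'}\TT_p\bd S_i$, $C:=\bigcap_{j\in J}\TT_p\bd S_j$, using the boundary part of the transversality of~$\mathcal U$ and the \emph{second} bullet in place of the first.

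I expect the proof to be, in the end, mostly bookkeeping: the only things needing care are the degenerate cases ($I'=\varnothing$, $J=\varnothing$, and~$p$ interior versus~$p\in\bd M$) and the observation — used in the first step — that the submanifold-structure facts follow from the lemma's hypotheses themselves, not from the transversality of $\mathcal U\cup\{T\}$ that we are trying to prove. The one genuine idea is recognising that Lemma~\ref{lem.linalgtransversality} is precisely the linear-algebraic mechanism that converts ``$T$ is transverse to every multi-intersection of the~$S_i$'' into ``every multi-intersection involving~$T$ is transverse to every multi-intersection not involving~$T$''.
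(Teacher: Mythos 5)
Your overall strategy matches the paper's: reduce the statement to pointwise linear algebra and apply Lemma~\ref{lem.linalgtransversality}. Your pointwise argument for the interior transversality condition (showing $C+(A\cap B)=V$ from $A+(B\cap C)=V$ and pairwise transversality) is correct and is exactly the paper's condition~(2). However, there is a gap in the ``First I would record\ldots'' paragraph, and it sits precisely where the paper spends most of its effort, namely its condition~(1): that $T\cap\bigcap_{i\in I''}S_i$ is a submanifold of~$M$ \emph{transverse to~$\bd M$}.

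Two problems with how you handle this. First, you dismiss the case $J=\varnothing$ as vacuous, but that is exactly the case whose content is the submanifold-ness of $T\cap\bigcap_{i\in I''}S_i$ (including transversality to~$\bd M$); it cannot be waved away and then reintroduced as a side remark. Second, and more seriously, your justification is circular: the ``reasoning recalled just after Definition~\ref{dfn.transverse}'' \emph{assumes} the set under consideration is already transverse and deduces consequences of that; you cannot invoke it to establish transversality of $\mathcal U\cup\{T\}$. Likewise, ``the two bullets say exactly that $T$ is transverse in Wall's sense including behaviour along~$\bd M$'' asserts rather than proves that the intersection is transverse to~$\bd M$. This implication is true, but it requires an argument: the paper proves it by the chain of equalities
\[
\TT_p(M)=\TT_p\Bigl(\bigcap_{i\in I}\bd S_i\Bigr)+\TT_p(\bd T)+R=\cdots=\TT_p\Bigl(T\cap\bigcap_{i\in I}S_i\Bigr)+\TT_p(\bd M),
\]
which involves a second, separate application of Lemma~\ref{lem.linalgtransversality} (to $\TT_p(\bigcap_i S_i)$, $\TT_pT$, and $\TT_p\bd M$). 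Equivalently, one can run a dimension count using $\TT_pT\cap\TT_pS\cap\TT_p\bd M=\TT_p\bd T\cap\TT_p\bd S$, but one way or another this step needs to be made explicit — it is where the second bullet earns its keep. Your explicit boundary computation in the last paragraph is not in the paper and is, strictly speaking, redundant once transversality to~$\bd M$ of the triple intersections is known; ironically, if you had instead used that boundary computation as the input to \emph{derive} transversality to~$\bd M$, the circularity would disappear and your proof would be complete.
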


\begin{proof}
	Two conditions need to be verified, for all disjoint subsets $I, J \sub \{1, \ldots, k\}$:
	\begin{enumerate}
		\item the intersection~$T \cap \bigcap_{i \in I}S_i$ is a submanifold of~$M$,
		\item the submanifold~$T \cap \bigcap_{i \in I}S_i$ is transverse to~$\bigcap_{j \in J}S_j$.
	\end{enumerate}
	
	For proving (1), the fact that $T$~is transverse to~$\bigcap_{i\in I} S_i$ tells us that $T \cap \bigcap_{i \in I}S_i$~is a manifold embedded in~$T$ \cite[Lemma~4.5.1]{Wa16}, and hence in~$M$.
	We are thus left to show that $T \cap \bigcap_{i \in I}S_i$~is transverse to~$\bd M$.
	
	Since $T$~is transverse to~$\bd M$, the tangent space~$\TT_p (T)$ at each boundary point $p\in \bd T$ has a $1$-dimensional subspace~$R$ such that
	\[\TT_p(T) = \TT_p(\bd T) \oplus R,\ \qquad	\TT_p(M) = \TT_p(\bd M) \oplus R.\]
	Assuming now that $p$~is in~$\bd M \cap T \cap \bigcap_{i\in I} S_i$, we obtain
	\begin{align*}
		\TT_p(M) &= \TT_p(\bd M) + R  &\text{(second equality)}\\
		&= \TT_p\biggl(\bigcap_{i \in I}\bd S_i\biggr) + \TT_p(\bd T) +  R &\textstyle \text{($\bd T$ transverse to~$\bigcap_{i \in I}\bd S_i$ in $\bd M$)}\\
		&= \TT_p\biggl(\bigcap_{i \in I}\bd S_i\biggr) + \TT_p(T)  &\text{(first equality)}\\
		&= \biggl(\TT_p\biggl(\bigcap_{i \in I} S_i\biggr) \cap \TT_p(\bd M)\biggr) + \TT_p(T)  & \text{(all $S_i$ properly embedded)}\\
		&= \biggl(\TT_p\biggl(\bigcap_{i \in I} S_i\biggr) \cap \TT_p(T)\biggr) + \TT_p(\bd M) & \textstyle \text{($T$~transverse to~$\bigcap_{i\in I}S_i$, Lemma~\ref{lem.linalgtransversality})}\\
		&= \biggl(\TT_p\biggl(T\cap \bigcap_{i \in I} S_i\biggr)\biggr) + \TT_p(\bd M).&
	\end{align*}
	Therefore, $T \cap \bigcap_{i \in I}S_i$~is transverse to~$\bd M$.
	
	Condition (2) follows from a straightforward application of Lemma~\ref{lem.linalgtransversality} to the tangent spaces of $T$, $\bigcap_{i\in I}S_i$ and $\bigcap_{j\in J}S_j$, at points where all these submanifolds meet.
\end{proof}

Finally, we tackle the main result of this section.

\begin{proof}[Proof of Proposition~\ref{prop.transverse'}]
	We will show that every proper submanifold~$T \sub M$ (or, to be more precise, its inclusion $\iota \colon T \into M$) can be approximated arbitrarily well by a proper embedding~$f\colon T \into M$ for which $f(T)$~satisfies the conditions in Lemma~\ref{lem.transversalityreduction}.
	
	Applying the first part of the Elementary Transversality Theorem, we see that for each~$I\sub \{1, \ldots, k\}$, the set of embeddings $\bd T \into \bd M$ transverse to~$\bd \bigl(\bigcap_{i \in I}S_i\bigr)$ in~$\bd M$ is open and dense in~$\C^\infty(\bd T, \bd M)$. Hence, so is the set of embeddings simultaneously satisfying this transversality condition for all (finitely many) subsets~$I$. We can thus approximate the restriction~$\iota|_{\bd T}$ arbitrarily well by a map~$f_\bd\colon \bd T \into \bd M$ transverse to all~$\bd\bigl(\bigcap_{i \in I}S_i\bigr)$. By Proposition~\ref{prop.embnbh}, we may take $f_\bd$~to be an embedding.
	
	One can now use a small isotopy from~$\iota|_{\bd T}$ to~$f_\bd$ in order to approximate~$\iota$ by a proper embedding~$f_0 \colon T \to M$ that differs from~$\iota$ by a small proper isotopy supported in a collar neighborhood of~$\bd M$, and such that $f_0|_{\bd T} = f_\bd$ and $f_0$~is transverse to~$\bd M$. This construction relies on the existence of tubular neighborhoods for submanifolds with boundary \cite[Theorem~2.3.8]{Wa16}.
	
	Finally, we note that for each~$I \sub \{1, \ldots, k\}$, the fact that $f_\bd$~is transverse to~$\bd \bigl(\bigcap_{i\in I} S_i\bigr)$ in~$\bd M$ implies that $f_\bd$~is transverse to~$\bigcap_{i \in I} S_i$ in~$M$, and so we can apply the second part of the Elementary Transversality Theorem to conclude that the set of maps in~$\C^\infty(T, M; f_0, \bd T)$ that are transverse to~$\bigcap_{i \in I} S_i$ is open and dense. Thus, as before, the set of maps satisfying this transversality condition for all subsets~$I$ is also dense, and so we can approximate~$f_0$ arbitrarily well by such a map~$f$. Again by Proposition~\ref{prop.embnbh} we can take $f$ to be a proper embedding. The submanifold $f(T)$~satisfies the conditions in Lemma~\ref{lem.transversalityreduction}, so we are done.
\end{proof}

\section[$\Sd(M, \phi)$ is connected.]{The complex $\Sd(M, \phi)$ is connected.}\label{sec.conn}

Throughout this article, we will make heavy usage of the (combinatorial) notion of a simplicial complex, so we briefly recall the basic definitions.

\begin{dfn} A \textbf{simplicial complex}~$S$ is the data of a set~$\V(S)$, called the \textbf{vertex set} of~$S$, and a subset of the power set of~$\V(S)$, whose elements are called \textbf{simplices}, such that:
	\begin{itemize}
		\item Each simplex~$\sigma$ is non-empty and finite. If $\sigma$~has $k+1$~elements, we say $\sigma$~has \textbf{dimension}~$k$, or that it is a \textbf{$k$-simplex}. Simplices of dimension~$1$ and~$2$ will sometimes be called \textbf{edges} and \textbf{triangles}, respectively.
		\item Every non-empty subset of a simplex~$\sigma$ is also a simplex, and is said to be a \textbf{face} of~$\sigma$.
	\end{itemize}
	
	The (possibly infinite) supremum among the dimensions of all simplices in~$S$ is the \textbf{dimension} of~$S$.
	
	A \textbf{map of simplicial complexes} $T\to S$ is a function $\V(T) \to \V(S)$ taking simplices of~$T$ to simplices of~$S$ (although not necessarily preserving their dimensions). A \textbf{subcomplex}~$T$ of~$S$ is a simplicial complex with $\V(T) \subseteq \V(S)$ and whose simplices are also simplices of~$S$. In particular, for each $k \in \NN$, the \textbf{$k$-skeleton} of~$S$  is the subcomplex of~$S$ with the same vertex set as~$S$, but only the simplices of dimension at most~$k$. 
	
	Every simplicial complex~$S$ gives rise to a topological space~$|S|$, its \textbf{geometric realization},
	constructed as follows: for each simplex~$\sigma$ of~$S$, let $\Delta_\sigma$ be a copy of the standard simplex of dimension~$\dim(\sigma)$ in~$\RR^{\dim(\sigma)+1}$, with the vertices of $\Delta_\sigma$~labeled by the vertices of~$\sigma$. Then take
	\[|S| := \bigg( \bigsqcup_{\text{$\sigma$ simplex of $S$}} \Delta_\sigma \bigg) / {\sim},\]
	where $\sim$~is generated by the affine maps $\Delta_\tau \into \Delta_\sigma$ given on vertices by the inclusions $\tau \into \sigma$ whenever $\tau$~is a face of~$\sigma$.
	This is a functorial construction: every map of simplicial complexes~$S\to T$ induces a continuous map $|S| \to |T|$ by extending the assignment on vertices $\V(S) \to \V(T)$ to affine maps on the~$\Delta_\sigma$ for all simplices~$\sigma$ of~$S$.
\end{dfn}

We will often blur the distinction between a simplicial complex~$S$ and its geometric realization, writing statements like ``$S$~is connected'' when referring to topological features of~$|S|$. Also, when showing connectedness of a simplicial complex, it suffices to prove it for the $1$-skeleton, which is a graph. In that case, we will always employ the equivalent combinatorial notion of connectedness for graphs, as the existence of a sequence of adjacent edges between any two vertices.

This article is devoted to the study of the following simplicial complex.

\begin{dfn}\label{dfn.Sdagger}
	Let $M$ be an oriented smooth manifold of dimension~$n$, and $\phi\in \h_{n-1}(M,\bd M)$~a codimension-$1$ homology class. We denote by~$\Sd(M, \phi)$ the simplicial complex defined as follows:
	\begin{itemize}
		\item The vertices are the (possibly disconnected) properly embedded oriented smooth hypersurfaces in~$M$ representing~$\phi$.
		\item A set of $k+1$ hypersurfaces as above is a $k$-simplex if those hypersurfaces are pairwise-disjoint.
	\end{itemize}
\end{dfn}

The first main result of this paper, which we shall prove in this section, is the following.

\begin{thm}[Connectedness of~$\Sd(M, \phi)$]\label{thm.connected}
	Let $M$~be an oriented compact smooth $n$-manifold, and $\phi\in \h_{n-1}(M,\bd M)$~a codimension-$1$ homology class. Then $\Sd(M, \phi)$~is connected.
\end{thm}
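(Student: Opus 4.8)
The plan is to prove connectedness of $\S(M,\phi)$ by showing that any two vertices $S, T$ can be joined by an edge-path, proceeding by induction on some measure of the ``complexity'' of the intersection $S \cap T$. First I would use Proposition~\ref{prop.transverse'} to isotope $T$ so that $\{S, T\}$ is a transverse set; then $S \cap T$ is a properly embedded codimension-$2$ submanifold of $M$, and $\Sigma := S \cap T$ has finitely many connected components. If $\Sigma = \emptyset$, then $S$ and $T$ are already disjoint and span an edge, so we are done. The complexity I would induct on is something like the number of components of $\Sigma$, or more robustly a lexicographic pair (number of components of $\Sigma$, total ``size''), designed so that the surgery step below strictly decreases it.

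The key geometric step is an \emph{oriented cut-and-paste} (oriented surgery) of $S$ and $T$ along (a component of) $\Sigma$. Since $S$ and $T$ are coherently oriented and represent the same class $\phi$, at each component $C$ of $\Sigma = S \cap T$ the two sheets can be resolved in the orientation-compatible way: locally $S \cup T$ looks like two transverse hyperplanes meeting in $C$, and there is a canonical way to smooth the intersection consistent with the co-orientations so that the result again represents $\phi$. Doing this simultaneously along all of $\Sigma$ produces a new properly embedded hypersurface $S'$ representing $\phi$, lying in an arbitrarily small neighborhood of $S \cup T$. The crucial properties to verify are: (i) $S'$ again represents $\phi$ — this is where coherent orientations are used, via a Mayer--Vietoris / local computation that the oriented sum $S \orientedsum T$ is homologous to $S$ (equivalently to $T$); (ii) after a further small perturbation making $\{S, S'\}$ and $\{S', T\}$ transverse, we have that $S' \cap S$ and $S' \cap T$ are each ``smaller'' than $S \cap T$ in the chosen complexity — morally because near each component of $\Sigma$ the surgered surface $S'$ no longer crosses $S$ (resp.\ $T$) but runs parallel to it. Concretely, after pushing $S'$ slightly off $S \cup T$, one arranges $|S' \cap S| \le |S \cap T|$ with the strict inequality coming from at least one resolved component, and similarly for $T$; the boundary bookkeeping (each component of $\bd S'$ isotopic in $\bd M$ to a component of $\bd S$ or $\bd T$) follows from the same local picture applied along $\bd\Sigma = \bd S \cap \bd T$.

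By the inductive hypothesis applied to the pairs $(S, S')$ and $(S', T)$, there are edge-paths in $\S(M,\phi)$ from $S$ to $S'$ and from $S'$ to $T$; concatenating them gives an edge-path from $S$ to $T$, completing the induction and hence the proof. The main obstacle I anticipate is making the complexity argument genuinely monotone: a naive count of intersection components can fail to drop (resolving one circle of $\Sigma$ might create new intersections between $S'$ and $S$ coming from other, unrelated components, or from the perturbation used to restore transversality). I would address this by choosing the surgery locus carefully — e.g.\ performing oriented surgery along \emph{all} of $\Sigma$ at once inside a tubular neighborhood of $\Sigma$ in which $S$ and $T$ are standard, so that outside this neighborhood $S' $ coincides with $S$ (resp.\ with $T$) and inside it is disjoint from the cores — and by tracking a complexity that is insensitive to the final $C^\infty$-small transversality perturbation (for instance, defining it before perturbing, or taking the perturbation small enough not to create intersections where there were none). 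Getting this localization and the accompanying general-position housekeeping exactly right, rather than the homological bookkeeping, is where the real work lies.
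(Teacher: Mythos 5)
The proposal misidentifies the homology class of the surgered hypersurface, and this is a genuine gap. If $S$ and $T$ are transverse hypersurfaces both representing~$\phi$ and you resolve their intersection in the orientation-compatible way, the resulting hypersurface $\Sigma$ is homologous to the chain-level sum $S+T$ and hence represents $2\phi$, not~$\phi$: the difference between $\Sigma$ and $S\cup T$ is the boundary of a small region near $S\cap T$ (this is exactly the shaded region in the bottom-left of Figure~\ref{fig.orientedsum}). Your claim~(i) --- that ``the oriented sum $S\orientedsum T$ is homologous to $S$ (equivalently to $T$)'' --- is therefore false unless $\phi=0$. Since $\Sigma$ is not a vertex of $\S(M,\phi)$, you cannot apply the inductive hypothesis to the pairs $(S,\Sigma)$ and $(\Sigma,T)$, and the argument as written does not close.

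The missing ingredient, which is the real content of the paper's proof, is a decomposition step: one must show that $\Sigma$ is the disjoint union of two (possibly disconnected) hypersurfaces $T_0\sqcup T_1$, \emph{each} representing~$\phi$. For this, one forms the directed graph $\Gamma$ whose vertices are the components of $M\cutalong\Sigma$ and whose edges are the components of $\Sigma$, oriented by the co-orientation of~$\Sigma$. Because $[\Sigma]=2\phi$, every loop in $M$ meets $\Sigma$ with even algebraic multiplicity, so every circuit in $\Gamma$ has even length; $\Gamma$ is therefore bipartite by Lemma~\ref{lem.oddcircuits}. Grouping the edges according to the direction in which they cross the bipartition yields $\Sigma=T_0\sqcup T_1$ with $[T_0]=[T_1]$, and torsion-freeness of $\h_{n-1}(M,\bd M)$ then forces $[T_0]=[T_1]=\phi$. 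A local count near $C=S\cap T$ then shows that at least one of $T_0, T_1$ meets each of $S, T$ in at most $\tfrac12|C|$ components, which drives the induction. The remainder of your outline --- the local model at $C$, the push-off, the general-position housekeeping via Proposition~\ref{prop.transverse'}, and the induction on $|S\cap T|$ --- is sound and matches the paper's argument; but the decomposition step, and the bipartiteness observation behind it, is exactly the ``homological bookkeeping'' you characterized as the easy part, and it is in fact where the essential idea lives.
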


The codimension hypothesis is essential. For example, every non-trivial element~$\phi$ of~$\h_2 (\CC \mathrm P^2)$ has non-zero algebraic self-intersection, so the analogously defined complex~$\Sd(\CC \mathrm P^2, \phi)$ has no edges, although there are clearly infinitely many vertices. Similarly, the assumption that $M$~is orientable cannot be dropped, as one sees by taking $M=\RR\mathrm P^2$. In this case, the generator of $\h_1(\RR\mathrm P^2) \iso \ZZ/2$, when reduced to $\ZZ /2$-coefficients, has non-trivial algebraic self-intersection, and this again obstructs the existence of edges in the simplicial complex.

Throughout this section,  $M$ will always be an oriented compact smooth manifold of dimension~$n$, and $\phi\in \h_{n-1}(M,\bd M)$~a codimension-$1$ homology class. We will also denote by~$|X|$ the number of connected components of a topological space~$X$ and by~$\dSd(S_0,S_1)$ the path length distance between hypersurfaces $S_0$~and~$S_1$ in the $1$-skeleton of~$\Sd(M, \phi)$.

Theorem~\ref{thm.connected} will follow from the following statement, where we focus only on connecting \textit{transverse} vertices (see Definition~\ref{dfn.transverse}).

\begin{prop}[Distance bound on transverse hypersurfaces]\label{prop.distbound}
	Let  $S_0,S_1 \subset M$ be a transverse pair of hypersurfaces representing~$\phi$. If $k$~is a non-negative integer such that $|S_0 \cap S_1|<2^k$, then $\dSd(S_0,S_1) \le 2^k$.
\end{prop}

In particular, if $S_0 \cap S_1 \ne \emptyset$, then by choosing $k$~to satisfy $2^{k-1} \le |S_0 \cap S_1| <2^k$ we obtain the coarser (but easier to remember) estimate \[\dSd(S_0,S_1) \le 2^k = 2 \cdot 2^{k-1} \le 2 \, |S_0 \cap S_1|.\]

\begin{proof}
	We proceed by induction over~$k$, and we will abbreviate $C:=S_0 \cap S_1$. If $k=0$, then we have~$|C|=0$, which means the~$S_i$ are disjoint. Hence they are connected by an edge and $\dSd(S_0,S_1)=1\le 2^0$.
	
	For positive~$k$, we will find a hypersurface~$T$ such that
	\begin{itemize}
		\item $T$ represents~$\phi$,
		\item $T$ is transverse to both~$S_i$, and
		\item $T$ has controlled intersection with each~$S_i$, in the following sense: \[|T \cap S_i| \le \frac{|C|}2 < 2^{k-1}.\]
	\end{itemize}
	By induction, it will follow that $\dSd(T, S_i) \le 2^{k-1}$ for each~$i$, and hence from the triangle inequality $\dSd(S_0, S_1) \le 2^k$.
	
	The overall strategy for finding~$T$ is to perform a certain surgery procedure on~$S_0, S_1$ in order to produce a third hypersurface~$\Sigma\subset M$ representing the homology class~$2 \phi$.  We then show that $\Sigma$~is the disjoint union of two hypersurfaces~$T_0, T_1$, each representing~$\phi$. Moreover, the set~$\{S_0, S_1, T_0, T_1\}$ is transverse, and we will observe that at least one of the~$T_m$ satisfies
	\[|T_m \cap S_i| \le \frac{|C|}2 \quad \text{for each~$i\in \{0,1\}$}\]
	(all these intersections are compact submanifolds of~$M$, and hence have finitely many components). This~$T_m$ will be our desired~$T$.
	
	To construct~$\Sigma$, we begin by observing that the normal bundle of the codimension-$2$ submanifold~$C$ of~$M$ is trivial. Indeed, since $S_0, S_1$~are both oriented, the orientation of~$M$ induces framings of~$S_0, S_1$, which jointly provide a framing of~$C$. Hence, there is an open neighborhood~$U$ of~$C$ in~$M$ that is diffeomorphic to~$C \times \RR^2$ via a diffeomorphism that identifies ${S_0 \cap U}$~with~$C \times \RR \times 0$, and $S_1 \cap U$~with~$C \times 0  \times \RR$, all respecting orientations.
	
	We construct~$\Sigma$ as follows (see Figure \ref{fig.orientedsum}):
	\begin{enumerate}
		\item Start with the union~$S_0 \cup S_1$.
		
		\item Replace a small neighborhood of~$C$ in~$S_0\cup S_1$ by a pair of smooth ramps connecting each side of~$C$ in~$S_0$ to a side of~$C$ in~$S_1$, in such a way that the resulting hypersurface inherits a consistent orientation from the~$S_i$. We make this construction more precise in the following paragraph, but the idea should be clear from the top right of Figure~\ref{fig.orientedsum}. 
		
		Consider the bump function~$\mathrm{Bp} \colon \RR \to \RR$ defined in Wall's book \cite[Section~1.1]{Wa16}, which satisfies
		\[ \begin{cases}
			\mathrm{Bp}(t) = 0 & \text{for $t\le 0$,}\\
			\mathrm{Bp}(t) \in {]0,1[}  & \text{for $0 < t < 1$, \quad and}\\
			\mathrm{Bp}(t) = 1  & \text{for $t\ge 1$.}
		\end{cases}\]
		We replace~$(S_0 \cup S_1) \cap U$ by the hypersurface corresponding, in~$C\times \RR^2$, to $C\times R$, where $R\subset \RR^2$~is the union of the two curves parameterized by \label{rampnotation}
		\begin{align*}
			t \mapsto \mathrm{Bp}(t)(t,0) + (1-\mathrm{Bp}(t))(0, t),\\
			t \mapsto \mathrm{Bp}(t)(0,t) + (1-\mathrm{Bp}(t))(t,0),
		\end{align*}
		with~$t\in \RR$.
		
		Note that the resulting hypersurface represents the homology class~$2\phi$, since the region~$C \times K$ of~$M$, where
		\[ K:=\{ t X \in \RR^2 \st t\in[0,1], X\in R \cap \DD^2 \}\quad \text{(suitably oriented),}\]
		exhibits the new hypersurface as homologous to~$[S_0] + [S_1]$. 
		
		\item Push this hypersurface slightly along its framing, so it intersects $S_0$~and~$S_1$ transversely, along a pair of copies of~$C$.
	\end{enumerate}
	We will say that any hypersurface~$\Sigma$ constructed in this manner is an \textbf{oriented surgery} of $S_0$~and~$S_1$.
	
	\begin{figure}[h]
		\centering
		\def \svgwidth{0.6\linewidth}
\begingroup%
  \makeatletter%
  \providecommand\color[2][]{%
    \errmessage{(Inkscape) Color is used for the text in Inkscape, but the package 'color.sty' is not loaded}%
    \renewcommand\color[2][]{}%
  }%
  \providecommand\transparent[1]{%
    \errmessage{(Inkscape) Transparency is used (non-zero) for the text in Inkscape, but the package 'transparent.sty' is not loaded}%
    \renewcommand\transparent[1]{}%
  }%
  \providecommand\rotatebox[2]{#2}%
  \newcommand*\fsize{\dimexpr\f@size pt\relax}%
  \newcommand*\lineheight[1]{\fontsize{\fsize}{#1\fsize}\selectfont}%
  \ifx\svgwidth\undefined%
    \setlength{\unitlength}{412.73454801bp}%
    \ifx\svgscale\undefined%
      \relax%
    \else%
      \setlength{\unitlength}{\unitlength * \real{\svgscale}}%
    \fi%
  \else%
    \setlength{\unitlength}{\svgwidth}%
  \fi%
  \global\let\svgwidth\undefined%
  \global\let\svgscale\undefined%
  \makeatother%
  \begin{picture}(1,0.86625371)%
    \lineheight{1}%
    \setlength\tabcolsep{0pt}%
    \put(0,0){\includegraphics[width=\unitlength,page=1]{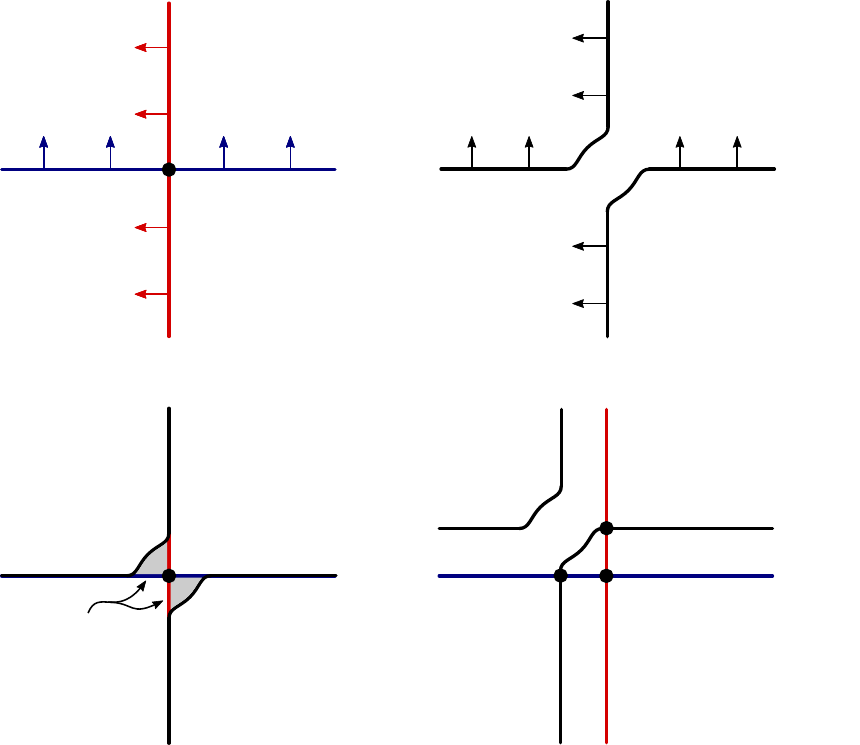}}%
    \put(3.57323592,-1.89394812){\color[rgb]{0,0,0}\makebox(0,0)[lt]{\lineheight{1.25}\smash{\begin{tabular}[t]{l}$\tilde P$\end{tabular}}}}%
    \put(4.12989236,-2.04137612){\color[rgb]{0,0,0}\makebox(0,0)[lt]{\lineheight{1.25}\smash{\begin{tabular}[t]{l}$w$\end{tabular}}}}%
    \put(0.86079722,0.26990931){\color[rgb]{0,0,0}\makebox(0,0)[lt]{\lineheight{1.25}\smash{\begin{tabular}[t]{l}$\Sigma$\end{tabular}}}}%
    \put(0.21123068,0.82569621){\color[rgb]{0.83137255,0,0}\makebox(0,0)[lt]{\lineheight{1.25}\smash{\begin{tabular}[t]{l}$S_1$\end{tabular}}}}%
    \put(0.35118851,0.6204962){\color[rgb]{0,0,0.50196078}\makebox(0,0)[lt]{\lineheight{1.25}\smash{\begin{tabular}[t]{l}$S_0$\end{tabular}}}}%
    \put(0.21279468,0.61971982){\color[rgb]{0,0,0}\makebox(0,0)[lt]{\lineheight{1.25}\smash{\begin{tabular}[t]{l}$C$\end{tabular}}}}%
    \put(0.02691094,0.11286208){\color[rgb]{0,0,0}\makebox(0,0)[lt]{\lineheight{1.25}\smash{\begin{tabular}[t]{l}$C \times K$\end{tabular}}}}%
  \end{picture}%
\endgroup%

		\caption{Performing oriented surgery on $S_0$~and~$S_1$. Top left: the local picture of~$S_0 \cup S_1$ in a neighborhood of~$C$, with framings of the~$S_i$ indicated by arrows. Top right: replacing a small neighborhood of~$C$ with a pair of ramps. The induced framing on the new hypersurface is illustrated. Bottom left: The shaded region corresponding to~$C \times K$ shows that the new hypersurface represents the class~$[S_0]+[S_1]$. Bottom right: properly isotoping this hypersurface along its framing yields the oriented surgery~$\Sigma$.}
		\label{fig.orientedsum}
	\end{figure}
	
	Our next goal is to show that $\Sigma$~is the disjoint union of two (possibly disconnected) hypersurfaces~$T_0, T_1$, each representing~$\phi$.
	We will say that the hypersurfaces~$T_0, T_1$ are obtained by \textbf{decomposing the oriented surgery~$\Sigma$} of~$S_0, S_1$. Let $f \colon M \to \SS^1$ be a continuous map with $f^{-1}(1) = \Sigma$  (such a map can be constructed by collapsing to a point the complement of an open tubular neighborhood of~$\Sigma$ in~$M$). Regarding~$\phi$ as an element of~$\h^1(M)$ and $f$~as a classifying map for~$2\phi$, we see that (for any basepoint) the induced map $f_* \colon \pi_1(M) \to \ZZ$ factors through $2\ZZ \into \ZZ$, and hence $f$~lifts to the double covering $\SS^1 \to \SS^1$, $z \mapsto z^2$. If $g \colon M \to \SS^1$~is the lifted map, then taking $T_0 := g^{-1}(1)$ and $T_1 := g^{-1}(-1)$ yields $\Sigma = T_0 \sqcup T_1$. Moreover, since $g$~is a classifying map for~$\phi$, we conclude $T_0$~and~$T_1$ both represent~$\phi$.

	All that is left is to see that at least one among~$T_0, T_1$ satisfies the claimed control on intersection with both~$S_i$. In fact, we will show more: there are non-negative integers~$N_0,N_1$ with $|C| = N_0 + N_1$, such that, for each~$m\in\{0,1\}$,
	\[|T_m \cap S_0| = |T_m \cap S_1| = N_m.\]
	Intuitively, ``the components of~$C$ are distributed among the~$T_m$''. In particular, for some~$m$ we have $N_m \le \frac{|C|}2$ and thus $T_m$~satisfies our claim.
	
	The existence of~$N_0, N_1$ is a consequence of the observation, plainly on display on the bottom right of Figure~\ref{fig.orientedsum}, that each component~$c$ of~$C$ gives rise to either:
	\begin{itemize}
		\item one component in each of the~$T_0\cap S_i$ and no component in either of the~$T_1 \cap S_i$ (if the sheet of~$\Sigma$ on the bottom right belongs to~$T_0$), or
		\item one component in each of the~$T_1\cap S_i$ and no component in either of the~$T_0 \cap S_i$ (if the sheet of~$\Sigma$ on the bottom right belongs to~$T_1$).
	\end{itemize}
	Each~$N_m$ is then the number of components of~$C$ that contribute to one (hence both) of the~$T_m \cap S_i$.
\end{proof}

\begin{proof}[Proof of Theorem~\ref{thm.connected}]
	Let $S_0, S_1$ be a (not necessarily transverse) pair of hypersurfaces representing~$\phi$. We produce a new properly embedded hypersurface~$S_1'$ by pushing-off~$S_1$ along its normal bundle, and perturbing it slightly to make it transverse to~$S_0$, while keeping it disjoint from~$S_1$ (this uses Proposition~\ref{prop.transverse}). Now apply Proposition~\ref{prop.distbound} to $S_0$~and~$S_1'$ to conclude that
	\[\dSd(S_0, S_1) \le \dSd(S_0, S_1') + \dSd(S_1', S_1) \le 2\,|S_0 \cap S_1'|+1,\]
	which proves the theorem.
\end{proof}

\section[Thurston norm-realizing surfaces]{Thurston norm-realizing surfaces in 3-manifolds}\label{sec.thurston}

We now study a variation of the simplicial complex from the previous section, where we consider only certain surfaces representing $2$-homology classes in irreducible and boundary-irreducible oriented compact smooth $3$-manifolds (see Definition~\ref{dfn.irred} below). These surfaces are, in a sense, most efficient: they realize the Thurston norm and have no homologically trivial parts (Definition~\ref{dfn.thurstonnorm}). Our goal is to show that restricting the complex from the previous section to the Thurston norm-realizing surfaces for a homology class still results in a connected complex. This will be accomplished simply by adjusting the proof of Proposition~\ref{prop.distbound}.

We begin by recalling some standard terminology.

\begin{dfn}\label{dfn.irred}
	Let $S$ be a compact smooth surface.
	\begin{itemize}
		\item A properly embedded circle in~$S$ is \textbf{inessential} if it bounds an embedded disc in~$S$. Otherwise, it is \textbf{essential}.
		
		\item A properly embedded arc in~$S$ is \textbf{inessential} if, together with an arc in~$\bd S$, it bounds an embedded disc in~$S$. Otherwise, it is \textbf{inessential}.
	\end{itemize}
	
	Let $M$~be a compact smooth $3$-manifold.
	\begin{itemize}
		\item $M$ is \textbf{irreducible} if every embedded $2$-sphere in~$M$ bounds an embedded $3$-ball.
		
		\item An embedded circle in~$\bd M$ is called a \textbf{meridian} if it is essential in~$\bd M$ but bounds a properly embedded disc in~$M$.
		
		\item $M$ is said to be \textbf{boundary-irreducible} if it contains no meridians.
	\end{itemize}
	
	Let $S$~be a properly embedded compact surface in~$M$.
	
	\begin{itemize}
		\item A \textbf{compressing disc} for~$S$ is a disc~$D$ embedded in~$M$ as a submanifold, with interior disjoint from~$S$, and whose boundary is either:
		\begin{itemize}
			\item an essential circle in~$S$, or
			\item the union of an essential arc in~$S$ and an embedded arc in~$\bd M$ (in which case $D$ is a submanifold with corner).
		\end{itemize} 
		We also demand that~$D$ intersect~$S$ transversely.
		\item If $S$~has a compressing disc, then $S$~is called \textbf{compressible}; otherwise it is \textbf{incompressible}.
	\end{itemize}
\end{dfn}

Note that if $S \subset M$~as above is a sphere or a disc, then $S$~is automatically incompressible. We also collect the following observation.

\begin{lem}[Incompressibility via connected components]\label{lem.compressiblecomponents}
	A properly embedded compact surface $S$~in a compact smooth $3$-manifold~$M$ is incompressible if and only if all its components are incompressible.
\end{lem}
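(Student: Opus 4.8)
The equivalence is proved by relating compressing disks for $S$ to compressing disks for its components; one direction is immediate, and the other rests on a standard innermost-disk surgery argument. For the easy direction, I argue that incompressibility of every component forces incompressibility of $S$. Contrapositively, let $D$ be a compressing disk for $S$. The subset $\partial D \cap S$ --- which is either the essential circle $\partial D$ itself, or the essential arc that forms the part of $\partial D$ lying in $S$ --- is connected, hence contained in a single component $S_0$, in which it is essential because it is essential in $S$; moreover $\operatorname{int}(D) \cap S_0 \subseteq \operatorname{int}(D) \cap S = \emptyset$. Thus $D$ is already a compressing disk for $S_0$, so $S_0$ is compressible.

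For the reverse direction, suppose some component $S_0$ of $S$ is compressible; I will produce a compressing disk for $S$. Among all disks $D$ with $\partial D$ an essential circle in $S_0$ (or an essential arc in $S_0$ together with an arc in $\partial M$), with $\operatorname{int}(D) \cap S_0 = \emptyset$, and transverse to $S$, choose one for which $|\operatorname{int}(D) \cap S|$ is minimal; such $D$ exist, since any compressing disk for $S_0$ can be made transverse to $S$ by a small isotopy supported away from $S_0$. If $\operatorname{int}(D) \cap S = \emptyset$, then $D$ is a compressing disk for $S$ and we are done. Otherwise, $\operatorname{int}(D) \cap S$ is a nonempty disjoint union of circles --- and, when $\partial S \neq \emptyset$, possibly arcs properly embedded in $D$ --- all lying in $S \setminus S_0$. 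Pick a component $\gamma$ of this intersection that is innermost (or, if it is an arc, outermost) in $D$, so that it cuts off a subdisk $D' \subseteq D$ with $\operatorname{int}(D') \cap S = \emptyset$, and let $S_j \neq S_0$ be the component containing $\gamma$. If $\gamma$ is essential in $S_j$, then $D'$ is a compressing disk for $S$, and we are done. If $\gamma$ is inessential, it bounds a disk (or, in the arc case, cuts off a half-disk together with an arc of $\partial S_j$) in $S_j$; surgering $D$ along an innermost such disk in $S_j$ --- excising from $D$ the subdisk bounded by the corresponding curve and regluing a parallel copy pushed slightly off $S_j$ --- yields a disk in the same class with strictly fewer intersection components, contradicting minimality.

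The step demanding real care is this surgery. One must take the disk in $S_j$ to be innermost \emph{there}, so that its interior is disjoint from $D$ and the push-off can be taken inside a collar of $S_j$ thin enough to meet neither $S$ nor the rest of $D$, guaranteeing that the new disk is embedded; a subsequent small perturbation restores transversality to $S$ without introducing new intersection components; and when $\partial S \neq \emptyset$ the arc components of $\operatorname{int}(D) \cap S$ are removed by the analogous outermost-arc (boundary-compression) move. Checking in each case that $|\operatorname{int}(D) \cap S|$ strictly decreases --- so that the appeal to minimality is legitimate --- is routine, and completes the argument.
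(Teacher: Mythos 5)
Your proof is correct and follows essentially the same strategy as the paper: the easy direction observes that a compressing disk for $S$ is automatically a compressing disk for the single component it touches, and the hard direction removes interior intersections of a compressing disk for $S_0$ with the other components by an innermost-disk (and outermost-arc) surgery argument. The one small point where you are more careful than the paper is in the surgery step itself: the paper replaces the innermost subdisk $D'\subseteq D$ by a disk $D_S\subset S$ with $\partial D_S=\gamma$, which may a priori meet $D$ again in its interior, whereas you surger along a disk that is innermost \emph{in}~$S_j$, so that the replacement is automatically disjoint from $D$ and the new disk is visibly embedded; this tidies up a detail the paper leaves implicit, but the idea is the same.
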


\begin{proof}
	Clearly, if $S$~is compressible with compressing disc~$D$, then the component of~$S$ that intersects~$\bd D$ also has~$D$ as a compressing disc.
	
	Conversely, suppose $S_0$ is a component of~$S$ that is compressible. A compressing disc~$D$ for~$S_0$ may fail to be a compressing disc for~$S$ because its interior may intersect other components of~$S$. In that case, we first perturb~$D$ slightly to make it transverse to~$S$, and then look at an intersection~$\gamma$ with~$S$ that is innermost in~$D$. Let $D'\sub D$~be a disc bounded by~$\gamma$ (possibly together with an arc in~$\bd S$). If $\gamma$ is an essential curve or arc of~$S$, then $D'$~is a compressing disc for~$S$ and we are done. Otherwise, one can modify~$D$ by replacing~$D'$ with a parallel copy of a disc~$D_S \subset S$ witnessing that $\gamma$~is inessential. The interior of this new compressing disc for~$S_0$ has fewer intersections with~$S$, so an inductive argument finishes the proof.
\end{proof}

Throughout the remainder of this section, $M$~will denote an irreducible and bound\-ary-irreducible oriented compact smooth $3$-manifold.

\begin{dfn}\label{dfn.thurstonnorm}
	Given an orientable compact surface~$S$, we define the non-negative integer
	\[\chi_-(S):= \sum_\text{$C$ component of~$S$} \max\{0, -\chi(C)\}, \]
	where $\chi$~is the Euler characteristic.
	
	For a homology class~$\phi \in \h_2(M, \bd M)$, the \textbf{Thurston norm} of~$\phi$, denoted~$\|\phi\|_M$,  is the minimal value of~$\chi_-(S)$, over all properly embedded surfaces~$S \subset M$ representing~$\phi$. Such a surface~$S$ is said to be \textbf{Thurston norm-realizing} if it realizes this minimum (that is, if~$\|[S]\|_M = \chi_-(S)$) and no union of components of~$S$ represents the zero class in~$\h_2(M, \bd M)$.
\end{dfn}

It is well-known that $\|\cdot\|_M$~extends to a norm on~$\h_2(M, \bd M; \RR)$, which was first observed by Thurston \cite[Theorem 1]{Th86}. We now collect some straightforward facts about Thurston norm-realizing surfaces:

\begin{enumerate}
	\item The only Thurston norm-realizing surface for the class $0 \in \h_2 (M, \bd M)$ is the empty surface.
	
	\item If a properly embedded surface~$S \subset M$ satisfies  $\|[S]\|_M = \chi_-(S)$, one can produce from~$S$ a Thurston norm-realizing surface simply by discarding a maximal null-homologous union of components of~$S$. Each discarded component is necessarily of non-negative Euler characteristic.
	
	\item The fact that $M$~is irreducible and boundary-irreducible implies that properly embedded spheres and discs are null-homologous, so no component of a Thurston norm-realizing surface in~$M$ is a sphere or a disc.
\end{enumerate}

The next property requires a bit more thought, so we promote it to a lemma:

\begin{lem}[Incompressibility of Thurston norm-realizing surfaces]\label{lem.incompressible}
	Every Thurston norm-realizing surface~$S \subset M$ is incompressible.
\end{lem}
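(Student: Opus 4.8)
The plan is to prove the contrapositive: if a properly embedded surface $S \subset M$ representing a class $\phi$ is compressible, then we can produce another surface $S'$ representing $\phi$ with $\chi_-(S') < \chi_-(S)$ (or $\chi_-(S') = \chi_-(S)$ but with fewer components or a null-homologous piece to discard), so $S$ cannot be Thurston norm-realizing. The main tool is the standard \emph{compression} move: given a compressing disk $D$ for $S$, we cut $S$ along $\partial D$ and glue in two parallel copies of $D$. Concretely, in a tubular neighborhood $\partial D \times [-1,1]$ inside $S$ (in the circle case) or a bigon neighborhood (in the arc case), we replace $\partial D \times [-1,1]$ by two parallel copies of $D$, obtaining a new properly embedded surface $S'$.

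The key steps, in order, are as follows. First I would treat the two types of compressing disk. If $\partial D$ is an essential circle in $S$, surgering along $D$ replaces a neighborhood annulus by two disks, which changes the Euler characteristic by $\chi(S') = \chi(S) + 2$; I would then argue that $\chi_-(S') \le \chi_-(S)$, with equality only if the components affected have $\chi \ge 0$ or become disconnected into sphere/disk pieces. Since $M$ is irreducible and boundary-irreducible, any sphere or disk component of $S'$ is null-homologous, so after discarding a maximal null-homologous union of components (as in fact (2) before the lemma) we obtain a surface $S''$ representing $\phi$ with $\chi_-(S'') < \chi_-(S)$, or $\le$; I need to rule out the equality case by analysing exactly what a compression does to each component — essentially that compressing an essential curve in a surface of $\chi_- \ge 1$ either strictly decreases $\chi_-$ or splits off a piece that is then discarded. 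Second, for the arc case, $\partial D$ is an essential arc in $S$ together with an arc in $\partial M$; here the compression changes $\chi(S)$ by $+1$, and a parallel argument applies, using boundary-irreducibility to ensure that the new disk components are null-homologous. Third, I would confirm that the surgered surface $S'$ is still homologous to $S$ rel boundary: this is because $S'$ and $S$ cobound the region swept out by $D$ (more precisely $S \cup S'$ bounds a $3$-chain that is a neighborhood of $D$), so $[S'] = [S] = \phi$.

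The main obstacle I anticipate is the bookkeeping in the equality case: one must show that compressing a \emph{Thurston norm-realizing} surface (where every subcollection of components is homologically nontrivial, so nothing can be discarded without changing $\phi$) strictly decreases $\chi_-$. The clean way to handle this is: a compressing disk exists only along an essential curve/arc, which by definition does not bound a disk in $S$ (resp. does not cut off a disk with a boundary arc), hence the component $S_0$ of $S$ meeting $\partial D$ satisfies $\chi(S_0) \le 0$ and in fact $\chi_-(S_0) = -\chi(S_0) \ge 1$ unless $S_0$ is an annulus or torus — and in those low-complexity cases one checks directly that the curve being compressible would force it to be inessential, a contradiction, or that the compression produces a sphere/disk which, being null-homologous, can be discarded, lowering $\chi_-$ while preserving $\phi$ since the \emph{other} components still carry $\phi$. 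Thus in every case we contradict minimality of $\chi_-(S) = \|\phi\|_M$ together with the no-null-homologous-subcollection condition, and the lemma follows. I would also remark that Lemma~\ref{lem.compressiblecomponents} lets us reduce immediately to the case where $S$ is connected, which slightly streamlines the analysis.
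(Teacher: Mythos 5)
Your overall strategy matches the paper's: compress $S$ along the disk $D$, note that the Euler characteristic increases, and derive a contradiction with $\chi_-(S) = \|\phi\|_M$ together with the requirement that no union of components of $S$ be null-homologous. The paper likewise compresses, observes that since $\chi_-$ cannot drop the component $C$ of $S$ meeting $\bd D$ must have $\chi(C) \ge 0$, hence is a torus or annulus (spheres and disks being automatically incompressible), and then finishes by noting that the compression exhibits $C$ as homologous to a sphere or to disk(s), which are null-homologous by irreducibility and boundary-irreducibility, so $C$ was itself a null-homologous component of~$S$ --- forbidden by Definition~\ref{dfn.thurstonnorm}.

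Your handling of the torus/annulus case, however, contains two errors. First, the claim that for a torus or annulus ``the curve being compressible would force it to be inessential'' is false: a torus $C \subset M$ can perfectly well carry an essential circle bounding a disk in~$M$; this is exactly the generic compressible torus (take the boundary of a solid-torus neighbourhood of an unknotted circle). Second, discarding the sphere or disk produced by compression does not lower~$\chi_-$: a torus and a sphere both contribute~$0$ to~$\chi_-$, as do an annulus and one or two disks, so in these cases $\chi_-(S') = \chi_-(S)$ and no norm contradiction is available. The actual contradiction comes entirely from the homology clause of the definition: the compressed sphere/disk is null-homologous (by irreducibility, respectively boundary-irreducibility), and compression does not change the relative homology class of the affected component, so $C$ was itself a null-homologous component of the norm-realizing surface~$S$, which the definition forbids. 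You invoke this clause in your closing sentence, but the path you sketch to reach it --- via a purported strict decrease in~$\chi_-$ --- does not actually get there; you need to argue directly that $C$ is null-homologous.
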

\begin{proof}
	Suppose for contradiction that $D$~is a compressing disc for~$S$. We modify~$S$ by removing a small open neighborhood of~$\bd D$ and capping the resulting boundary components with two discs parallel to~$D$. After smoothening, the newly-formed surface~$S'$ is homologous to~$S$ and satisfies $\chi(S')  = \chi(S)+2$. Since $S$~is Thurston norm-realizing, this increase in~$\chi$ cannot amount to a decrease in~$\chi_-$, so $\bd D$~intersects a compressible component~$C$ of~$S$ with non-negative Euler characteristic. But spheres and discs are always incompressible, so~$C$ must be a torus or an annulus. Modifying $C$ by the surgery along~$D$ just described shows that $C$~is homologous to a sphere or a pair of discs, hence null-homologous. This is not allowed by $S$~being Thurston norm-realizing, so we ruled out all possibilities for~$C$, and thus $D$~cannot exist.
\end{proof}

We now introduce the main result in this section.

\begin{dfn}\label{dfn.Tdagger}
	Given a homology class~$\phi \in \h_2(M, \bd M)$, we define $\Td(M,\phi)$~to be the full subcomplex of~$\Sd(M, \phi)$ spanned by the vertices that are Thurston norm-realizing surfaces.
\end{dfn}

\begin{thm}[Connectedness of~$\Td(M, \phi)$]\label{thm.3mfdcase}
	Let $M$~be an irreducible and bound\-ary-irreducible oriented compact smooth $3$-manifold, and let~$\phi\in \h_{2}(M,\bd M)$. Then $\Td(M, \phi)$ is connected. 
\end{thm}

In order to prove Theorem~\ref{thm.3mfdcase}, we will establish a distance bound similar to the one given in Proposition~\ref{prop.distbound}. The corresponding statement in this setting is the following, where $\dTd$~denotes the path length distance in the $1$-skeleton of~$\Td(M, \phi)$:

\begin{prop}[Distance bound on surfaces intersecting essentially]\label{prop.3mfddistbound}
	Let $S_0 ,S_1 \subset M$ be a transverse pair of Thurston norm-realizing surfaces representing~$\phi$, and assume moreover that each component of $S_0 \cap S_1$ is essential in both $S_0$~and~$S_1$. If $k$~is a non-negative integer with $|S_0 \cap S_1|<2^k$, then $\dTd(S_1, S_2) \le 2^k$.
\end{prop}

\begin{proof}
	The proof structure is the same as that of Proposition~\ref{prop.distbound}, where we induct over~$k$. Again we will write $C:=S_0 \cap S_1$. The case $k=0$ is immediate.
	
	For the induction step, we will use a similar argument to the one given earlier, where a new surface~$T$ was constructed by decomposing an oriented surgery of $S_0$~and~$S_1$. We need only ensure that $T$~is Thurston norm-realizing and satisfies the additional requirement that for each $i\in\{0,1\}$, every component of $T \cap S_i$ is essential in both $T$~and~$S_i$.
	
	Performing oriented surgery on the~$S_i$ via the three-step procedure described in the proof of Proposition~\ref{prop.distbound} yields an oriented surface~$\Sigma_0$ representing the class~$2\phi$. For our proof, however, we need an additional step in the construction:
	\begin{enumerate}
		\setcounter{enumi}{3}
		\item Remove a maximal null-homologous union of components of~$\Sigma_0$.
	\end{enumerate}
	Denote the resulting surface by~$\Sigma$. Note that irreducibility and boundary-irreducibility of~$M$ imply that Step 4 removes every sphere or disc component of~$\Sigma_0$ -- although in fact we will soon see that none were present in~$\Sigma_0$ to begin with.
	
	We make three observations concerning $\Sigma_0$~and~$\Sigma$:
	\begin{enumerate}
		\item We have $\chi(\Sigma_0) = \chi(S_0 \sqcup S_1)$. Indeed, as an abstract surface, $\Sigma_0$~can be constructed from the disjoint union~$S_0 \sqcup S_1$ by cutting off small neighborhoods of both copies of~$C$, and gluing them back along the newly formed boundary (Figure \ref{fig.surgery}). This does not alter the Euler characteristic.
		
		\begin{figure}[h]
			\centering
			\def \svgwidth{0.8\linewidth}
			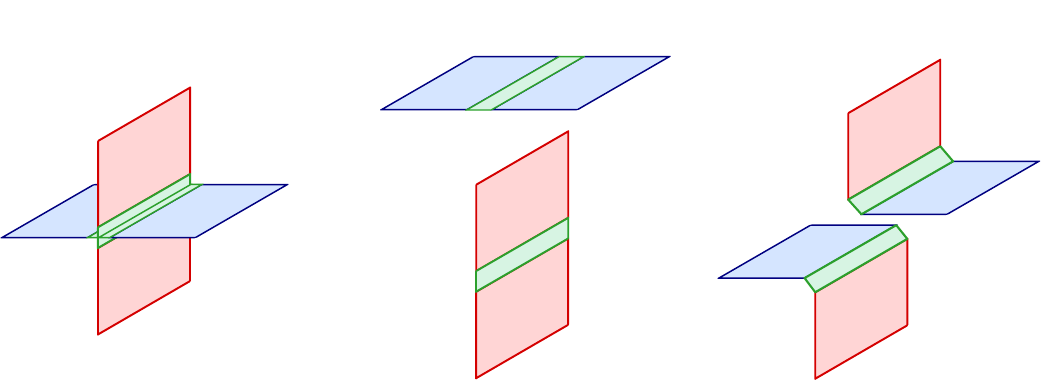
			\caption{Constructing~$\Sigma_0$ as an abstract surface by performing surgery on $S_0 \sqcup S_1$.}
			\label{fig.surgery}
		\end{figure}
		
		\item The surface $\Sigma_0$~has no sphere or disc components, so $\chi(\Sigma_0) \leq \chi(\Sigma)$. To see this, consider the ``seams'' in~$\Sigma_0$ that result from surgery along~$C$. Explicitly, these seams correspond, in the notation of page~\pageref{rampnotation}, to the connected components of~$C\times \left\{ \pm \left(\frac 12,- \frac 12\right)\right\} \subset C\times R$.
		Since there are no sphere or disc components in either of the~$S_i$, any sphere or disc component in~$\Sigma_0$ would have been produced during surgery, and thus have a seam. An innermost seam would then correspond to a component of~$C$ that is inessential in one of the~$S_i$. Such components are excluded by assumption.
		
		\item For each $i\in\{1,2\}$, every component~$\gamma$ of~$\Sigma_0 \cap S_i$ is essential in both $\Sigma_0$~and~$S_i$. Indeed, $\gamma$~is parallel in~$S_i$ to a component of~$C$, and hence essential in~$S_i$. Moreover, $\gamma$~is parallel in~$\Sigma_0$ to a seam, so if it were inessential in~$\Sigma_0$, then $\Sigma_0$~would have an inessential seam. As every innermost inessential seam arises from a component of~$C$ that is inessential in $S_0$~or~$S_1$, we conclude $\gamma$~is essential in~$\Sigma_0$. This observation remains valid when we replace~$\Sigma_0$ with~$\Sigma$.
	\end{enumerate}
	
	Now, the same argument as in the proof of Proposition~\ref{prop.3mfddistbound} shows that $\Sigma$~is the disjoint union of two surfaces~$T_0, T_1$, each representing the class~$\phi$. Step~4 in the construction of~$\Sigma$ ensures that no~$T_m$ contains a union of null-homologous surfaces. Hence, to prove that the~$T_m$ are Thurston norm-realizing, we need only argue that both satisfy $\chi_-(T_m) = \|\phi\|_M$.
	
	Consider the following sequence of (in)equalities:
	\begin{align*}
		\chi_-(T_0)  + \chi_-(T_1) &= -\chi(T_0)-\chi(T_1) & \text{(no spheres or discs in $\Sigma$)}\\
		&= -\chi (\Sigma)\\
		&\le -\chi (\Sigma_0) & \text{(Observation 2)}\\
		&= -\chi (S_0 \sqcup S_1) & \text{(Observation 1)}\\
		&= -\chi(S_0) - \chi(S_1)\\
		&= \chi_-(S_0)+ \chi_-(S_1) & \text{(no spheres or discs in the~$S_i$)}\\
		&= 2 \|\phi\|_M & \text{(the~$S_i$ are Thurston norm-realizing)}&.
	\end{align*}
	Since we cannot have $\chi_-(T_m) <  \|\phi\|_M$ for either~$m$, this shows $\chi_-(T_0) = \chi_-(T_1) = \|\phi\|_M$.
	
	The final step of the proof of Proposition~\ref{prop.distbound} consisted of observing that each component of~$C$ contributes with one component to both~$T_0 \cap S_i$ and none to either~$T_1 \cap S_i$, or vice-versa. In view of Step 4 of the construction of~$\Sigma$, this observation should now be adapted to: each component of~$C$ contributes with \emph{at most} one component to both~$T_0 \cap S_i$ and none to either~$T_1 \cap S_i$, or vice-versa. This change does not affect the conclusion that for some $T\in \{T_0,T_1\}$ we have $|T \cap S_0| = |T \cap S_1| \le \frac{|C|}2$.
	
	The reduction to the induction hypothesis then works as before, provided that for each $i\in\{0,1\}$, every component of $T \cap S_i$ is essential in both $T$~and~$S_i$. But this is a direct consequence of Observation 3 above.
\end{proof}

Before using Proposition~\ref{prop.3mfddistbound} to prove Theorem~\ref{thm.3mfdcase}, we need an auxiliary result to reduce us to the case where all intersections between the~$S_i$ are essential in both~$S_i$. Note that whenever $S_0, S_1$ are transverse properly embedded surfaces in~$M$ that are incompressible, each component of~$S_0 \cap S_1$ that is inessential in one of the~$S_i$ is automatically inessential in the other as well. We will thus simply call a component \textbf{essential} or \textbf{inessential} accordingly, and we will denote by $\ess(S_0, S_1)$ the number of essential components.

\begin{lem}[Removing inessential intersections]\label{lem.inessential}
	Let $S_0, S_1$~be a transverse pair of properly embedded incompressible surfaces in~$M$, and suppose $S_0 \cap S_1$~has an inessential component. Then there exists a properly embedded surface~$S_1'\subset M$ such that:
	\begin{itemize}
		\item $S_1'$~is properly isotopic to and disjoint from~$S_1$,
		\item $S_0$~and~$S_1'$ are a transverse pair, 
		\item $|S_0 \cap S_1'| < |S_0 \cap S_1|$, and
		\item $\ess(S_0, S_1') = \ess(S_0, S_1)$.
	\end{itemize}  
\end{lem}

We point out that we will not need the fourth item in the current section, but it will be useful later, when proving Corollary~\ref{cor.3mfdnodagger}.

\begin{proof}
	
	Let $\gamma$~be an inessential component of $S_0 \cap S_1$, and suppose further that it is innermost in~$S_0$ -- that is, it cuts off a disc in~$S_0$ with interior disjoint from~$S_1$.
	
	We consider first the case where $\gamma$~is a circle (Figure~\ref{fig.inessential}, left). For each $i \in \{0,1\}$, let $D_i \subset S_i$ be the disc it bounds in the relevant surfaces (where $D_0$~has interior disjoint from~$S_1$). Since the~$D_i$ intersect only at~$\gamma$, they form an embedded sphere in~$M$, which by irreducibility of~$M$ bounds a ball~$B$. Note that the interior of~$B$ is disjoint from~$S_1$: indeed, $S_1$~is disjoint from the interior of~$D_0$ (because $D_0$~is innermost), and any component of~$S_1$ contained in the interior of~$B$ would be compressible or a sphere. We can thus properly isotope~$S_1$ by pushing it in the direction of its normal bundle that points towards the interior of~$B$, and then use a proper isotopy supported in a small neighborhood of the parallel copy~$D_1'$ of~$D_1$ to push~$D_1'$ across~$B$ and past~$D_0$. For the resulting surface~$S_1'$, the intersection~$S_0\cap S_1'$ is comprised precisely of one parallel copy of each component of~$S_0 \cap S_1$ away from~$D_1$. In particular, $\gamma$~does not contribute to~$S_0 \cap S_1'$, and so $|S_0 \cap S_1'|<|S_0 \cap S_1|$. As any other intersections getting removed are contained in~$D_1$, they are inessential, and so we also have $\ess(S_0, S_1') = \ess(S_0, S_1)$.
	
	\begin{figure}[h]
		\centering
		\def \svgwidth{0.7\linewidth}
		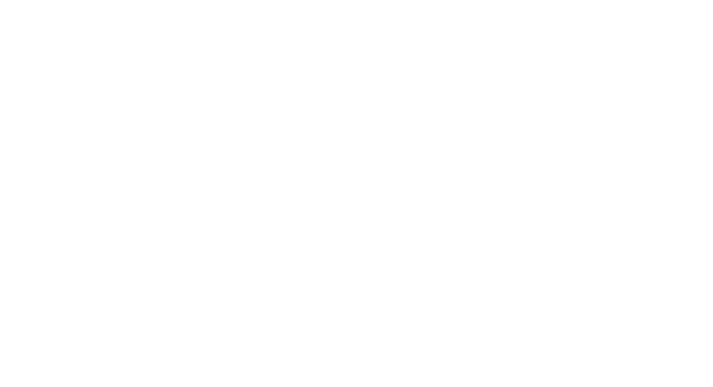
		\caption{Using incompressibility of $S_0$~and~$S_1$ to remove an inessential intersection~$\gamma$ that is innermost in~$S_1$. We depict the cases where $\gamma$~is a circle (left) and where it is an arc (right).}
		\label{fig.inessential}
	\end{figure}
	
	If $\gamma$~is an arc, one proceeds in analogous fashion (Figure \ref{fig.inessential}, right). Since~$\gamma$ is inessential in both~$S_i$, we have:
	\begin{itemize}
		\item a disc~$D_0 \subset S_0$ jointly bounded by $\gamma$~and by an arc~$\beta_0 \subset \bd S_0$, such that the interior of~$D_0$ is disjoint from~$S_1$, and
		\item a disc~$D_1 \subset S_1$ jointly bounded by $\alpha$~and by an arc~$\beta_1 \subset \bd S_1$.				
	\end{itemize}
	Since $D_0 \cup D_1$~is a properly embedded disc in~$M$ with boundary $\beta_0 \cup \beta_1$, boundary-irreducibility of~$\bd M$ guarantees we also have:
	\begin{itemize}
		\item a disc~$E \subset \bd M$ bounded by~$\beta_0 \cup \beta_1$.
	\end{itemize}
	Irreducibility of~$M$ again provides a 3-ball~$B$ with interior disjoint from~$S_0$, and whose boundary is~$D_0 \cup D_1 \cup E$. We push~$S_1$ off of itself in the direction of~$B$ and use~$B$ to properly isotope~$D_0$ through~$D_1$. Again, we have $|S_0 \cap S_1'|<|S_0 \cap S_1|$ and $\ess(S_0, S_1') = \ess(S_0, S_1)$.
\end{proof}

\begin{proof}[Proof of Theorem~\ref{thm.3mfdcase}]
	Let $S_0, S_1$~be Thurston norm-realizing surfaces for~$\phi$, which we aim to show are connected by a path in the $1$-skeleton of~$\Td(M, \phi)$. We may assume that $S_0, S_1$~form a transverse pair -- if not, take a parallel copy of~$S_1$ and perturb it using Proposition~\ref{prop.transverse} to produce another Thurston norm-realizing surface for~$\phi$ disjoint from~$S_1$ and forming a transverse pair with~$S_0$.
	
	Since the~$S_i$ are incompressible by Lemma~\ref{lem.incompressible}, we know that either all components of~$S_0 \cap S_1$ are essential in both~$S_i$, or, by Lemma~\ref{lem.inessential}, there is a surface~$S_1'$ disjoint from~$S_1$, forming a transverse pair with~$S_0$, and such that $|S_0 \cap S_1'| < |S_0 \cap S_1|$. Repeating this argument with $S_1'$~in place of~$S_1$ enough times, we eventually find a surface $S_1^\circ$~connected by a path to~$S_1$ and whose intersections with~$S_0$ are all essential in both $S_0$~and~$S_1^\circ$. Now we apply Proposition~\ref{prop.3mfddistbound} to $S_0$~and~$S_1^\circ$, which finishes the construction of a path from~$S_0$ to~$S_1$.
\end{proof}

\section[$\Sd(M, \phi)$ is simply connected]{The complex $\Sd(M, \phi)$ is simply connected}\label{sec.simpcon}

For this section, we return to the setting where $M$~is an oriented compact smooth manifold of arbitrary dimension~$n$, and $\phi \in \h_{n-1}(M, \partial M)$ is a codimension-$1$ homology class. We will expand the techniques used in the proof of Theorem~\ref{thm.connected} to establish the following result:

\begin{thm}[Simple connectedness of~$\Sd(M, \phi)$]\label{thm.simplyconn}
	Let $M$~be an oriented compact smooth $n$-manifold and let $\phi\in\h_{n-1}(M, \bd M)$. Then the complex~$\Sd(M, \phi)$ is simply connected.
\end{thm}

Similarly to the previous sections, we will start by proving a weaker version of Theorem~\ref{thm.simplyconn} that deals only with collections of hypersurfaces that are transverse, in the sense of Definition~\ref{dfn.transverse}. Subcomplexes of~$\Sd (M, \phi)$ will be called \textbf{transverse} if their set of vertices is transverse (extending the terminology established in Section~\ref{sec.genpos} for finite subsets of~$\V(\Sd(M, \phi))$).

Before presenting this weaker statement (Proposition~\ref{prop.transimplyconn}), we remind the reader of the notions of a simplicial cone, and of a subdivision of a simplicial complex.

\begin{dfn}
	Let $S$~be a simplicial complex. A \textbf{simplicial cone} of~$S$ is a simplicial complex whose vertex set is obtained from~$\V(S)$ by adding one new element $v \not  \in \V(S)$, and whose $k$-simplices are the  $k$-simplices of~$S$ and the $\sigma\cup \{v\}$, with $\sigma$~a $(k-1)$-simplex of~$S$. The new vertex~$v$ is called the \textbf{cone point}.
\end{dfn}

Of course all simplicial cones of~$S$ are isomorphic via a unique isomorphism restricting to the identity on~$S$, so we may talk about ``the'' simplicial cone.

\begin{dfn}
	A \textbf{subdivision} of a simplicial complex~$S$ is a simplicial complex~$T$ with $\V(S) \subseteq \V(T)$, such that there is a homeomorphism $f\colon |S| \to |T|$ satisfying:
	\begin{itemize}
		\item on the (geometrical realization of the) $0$-skeleton of~$S$, the map~$f$ restricts to the inclusion $\V(S) \hookrightarrow \V(T)$, and
		\item for each geometric simplex~$|\tau| := \mathrm{Hull}(\tau) \subseteq |T|$ of~$T$, the pre-image~$f^{-1}(|\tau|)$ is contained in some geometric simplex~$|\sigma|$ of~$S$.
	\end{itemize} 
\end{dfn}

\begin{prop}[Contractibility of $1$-subcomplexes with transverse edges]\label{prop.transimplyconn}
	For each transverse $1$-dimensional subcomplex~$P$ of~$\Sd(M,\phi)$, the inclusion $P \into \Sd(M,\phi)$ extends to some subdivision of the simplicial cone of~$P$.
\end{prop}

Proving Proposition~\ref{prop.transimplyconn} will take most of the present section.
It would be a straightforward affair if we were able to find a vertex~$T$ in~$\Sd(M, \phi)$ such that for each simplex~$\sigma$ of~$P$, the set~$\sigma \cup \{ T\}$ is a simplex of~$\Sd(M, \phi)$. There is however no reason to expect such a hypersurface~$T$ to exist, so we will instead just pick some~$T$ (satisfying a transversality assumption) and ask how far the~$\sigma \cup \{ T\}$ are from being simplices. This approach relies on a family of notions of complexity for simplices in the cone of~$P$, or, more generally, for simplices in a complex~$Q$ with $\V(Q)$~a ``nice enough'' subset of~$\V(\Sd(M, \phi))$.

\begin{dfn}
	Let $Q$~be a nonempty finite simplicial complex such that $\V(Q) \subset \V(\Sd(M, \phi))$ and every simplex of~$Q$ is a transverse set. For each $l \in \NN_{\ge 1}$:
	\begin{itemize}
		\item The \textbf{$l$-complexity} of a $k$-simplex~$\sigma = \{S_0, \ldots, S_k\}$ of~$Q$ is
		\[ \langle\sigma\rangle_l := \begin{cases}
			0 & \text{if $k < l$,}\\
			| S_0 \cap \ldots \cap S_l | & \text{if $k=l$,}\\
			\text{the maximal $l$-complexity among the $l$-dimensional faces of~$\sigma$} & \text{if $k>l$.}
		\end{cases}\]
		\item The \textbf{$l$-complexity} of~$Q$ is
		\[\langle Q \rangle_l := \max \{ \langle \sigma\rangle_l \in \NN \st \text{$\sigma$ is a simplex of~$Q$} \}.\]
	\end{itemize}
\end{dfn}

Note that if $\langle Q\rangle_l=0$ for a certain~$l$, then $\langle Q \rangle_m=0$ for all~$m\ge l$. Moreover, $Q$~is a subcomplex of~$\Sd(M,\phi)$ precisely when $\langle Q\rangle_1=0$.

The definition we just gave is more general than necessary for our purposes. Indeed, as the cone of~$P$ mentioned in the preceding discussion is $2$-dimensional, we will only make use of the notions of $1$- and $2$-complexity.

\begin{prop}[Reducing $2$-complexity]\label{prop.2cpx}
	Let $Q$~be a finite $2$-dimensional simplicial complex with $\V(Q) \subset \V(\Sd(M, \phi))$, and whose simplices are transverse. 
	If $\langle Q\rangle _2>0$, then there is a subdivision~$Q'$ of~$Q$ with $\V(Q') \subset \V(\Sd(M, \phi))$ and transverse simplices, such that
	$\langle Q'\rangle_2 <\langle Q\rangle_2$.
\end{prop}
\begin{proof}
	The proof scheme is similar to that of Proposition~\ref{prop.distbound}. The first step will be to construct, for each $2$-simplex $\sigma = \{S_0, S_1, S_2\}$ of~$Q$ of maximal $2$-complexity~$N$, an oriented surgery~$\Sigma$ of the~$S_i$. Of course we have to clarify what this means for a set of three hypersurfaces. We will then decompose~$\Sigma$ as a disjoint union of three hypersurfaces~$T_0, T_1, T_2$, each representing~$\phi$. Finally, we will use a counting argument to see that adding one of the~$T_m$ to the vertex set of~$Q$ allows us to  subdivide~$\sigma$ into three triangles of smaller $2$-complexity than~$\sigma$.
	
	As before, the construction of the oriented surgery~$\Sigma$ starts with the union~$S_0 \cup S_1\cup S_2$, which we wish to modify near points where the~$S_i$ meet. However, this time we have to consider not only the local model near the intersection of precisely two hypersurfaces  described in the proof of Proposition~\ref{prop.distbound}, but also the neighborhoods of triple points.
	
	Let~$P_0, P_1, P_2$ be the three coordinate planes in~$\RR^3$. Denoting $C:= S_0 \cap S_1 \cap S_2$, the fact that all involved hypersurfaces are oriented implies that there is a neighborhood~$U$ of~$C$ in~$M$ that is diffeomorphic to~$C \times \RR^3$, via a diffeomorphism identifying $S_i \cap U$ with~$C \times P_i$ in a way that preserves all framings (Figure~\ref{fig.triple}, top left).
	
	\begin{figure}[h]
		\centering
		\def \svgwidth{0.75\linewidth}
		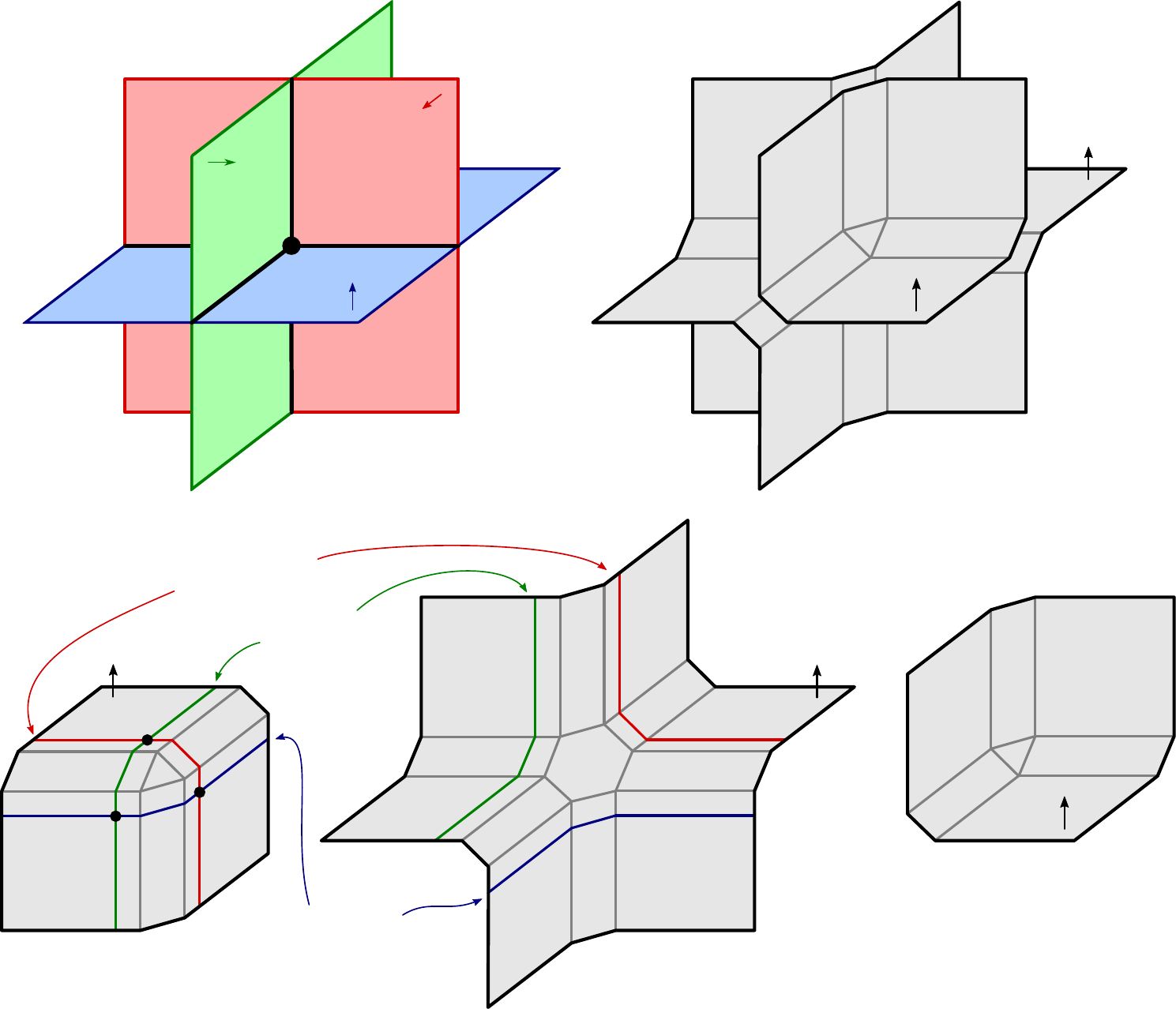
		\caption{The oriented surgery construction near triple points. Top left: the local picture of~$S_0 \cup S_1 \cup S_2$ near~$C$, with framings of the~$S_i$ indicated by arrows. Top right: the oriented surgery~$\Sigma$ of the~$S_i$ near~$C$. Bottom: the three sheets of~$\Sigma$ formed near~$C$, and their intersections with the~$S_i$. The bottom sheet has exactly one triple intersection with each two among the~$S_i$, the middle sheet intersects each~$S_i$ but forming no triple points, and the top sheet is disjoint from all~$S_i$.}
		\label{fig.triple}
	\end{figure}
	
	Whereas the ``ramp'' construction in the proof of Proposition~\ref{prop.distbound} replaces each double intersection with two sheets (Figure~\ref{fig.orientedsum}, top right), its analogue for triple intersections gives rise to three sheets (Figure~\ref{fig.triple}, top right). Performing this modification near triple points, the ramp construction near double points, and then pushing everything in the direction of the framings of the~$S_i$ yields the oriented surgery~$\Sigma$. 
	The resulting~$\Sigma$ represents the homology class~$3 \phi$ and, together with the~$S_i$, forms a transverse set.
	
	The oriented surgery $\Sigma$~can be decomposed as the disjoint union of three properly embedded hypersurfaces $T_0, T_1, T_2$, each representing~$\phi$:
	As in the proof of Proposition~\ref{prop.distbound}, choose a map $f\colon M \to \SS^1$ for which $f^{-1}(1) = \Sigma$. Regarding $\phi$~as a $1$-dimensional cohomology class, the map~$f$ is a classifying map for~$3\phi$, and so it factors through the $3$-fold cover $\SS^1 \to \SS^1$, $z \mapsto z^3$. Denoting by~$g$ the lifted map, and writing $\omega := \mathrm{e}^{\frac{2\pi}{3} \mathrm{i}}$, we may take $T_0 := g^{-1}(1)$, $T_1 := g^{-1}(\omega)$ and $T_2 := g^{-1}(\omega^2)$.

	At the bottom of Figure~\ref{fig.triple}, we see that the lowest of the three sheets of~$\Sigma$ produced near each component~$c$ of~$C$ forms one triple intersection with each two of the~$S_i$, whereas the other two sheets form no triple intersections. Hence, for exactly one~$m \in \{0,1,2\}$, the component~$c$ contributes with $+1$ to each of the numbers $|T_m \cap S_0 \cap S_1|$, $|T_m \cap S_0 \cap S_2|$ and $|T_m \cap S_1 \cap S_2|$. Therefore, the total number~$N$ of triple intersections among the~$S_i$ decomposes as a sum $N = N_0 + N_1 + N_2$ of non-negative integers, where for each~$m\in \{0,1,2\}$ we have
	\[N_m = |T_m \cap S_0 \cap S_1| = |T_m \cap S_0 \cap S_2| = |T_m \cap S_1 \cap S_2|.\]
	Some~$m$ must then satisfy $N_m \le \frac N3$, so let $T:= T_m$.
	
	We produce the desired subdivision~$Q'$ of~$Q$ as follows: for each triangle~$\sigma$ of~$Q$ with $\langle \sigma\rangle_2 = N$, add the vertex~$T$ just described to~$\V(Q)$, and replace~$\sigma$ with the triangles $\{ T, S_0, S_1\}$, $\{ T, S_0, S_2\}$ and $\{ T, S_1, S_2\}$ (as well as the necessary edges), as illustrated in Figure~\ref{fig.2subdiv}.
	\begin{figure}[h]
		\centering
		\def \svgwidth{0.6\linewidth}
		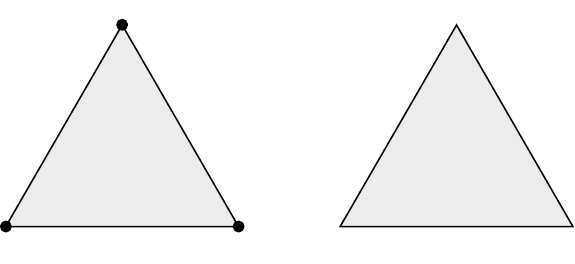
		\caption{$Q$ is subdivided into $Q'$ by replacing each triangle~$\sigma$ of $2$-complexity~$N$ by three triangles with $2$-complexity at most~$\frac N 3$.}
		\label{fig.2subdiv}
	\end{figure}
	
	Each of the new triangles has $2$-complexity at most~$\frac N3 < N$, and so ${\langle Q'\rangle_2 < \langle Q \rangle}$.
\end{proof}

We point out that there seems to be no obstacle to generalizing the above argument beyond complexes of dimension~$2$. More concretely, let $k\in \NN_{\ge 1}$, and let $Q$~be a finite $k$-dimensional simplicial complex with $\V(Q) \subset \V(\Sd(M, \phi))$ and transverse simplices, such that $\langle Q\rangle_k>0$. Then $Q$~can be subdivided into $Q'$~with $\V(Q') \subset \V(\Sd(M, \phi))$ and transverse simplices, such that $\langle Q'\rangle _k <\langle Q\rangle_k$. Indeed, each application of the construction and decomposition of an oriented surgery of the vertices in a $k$-simplex of~$Q$ with maximal $k$-complexity~$N$ would subdivide that simplex into $k+1$~simplices of complexity at most~$\frac{N}{k+1}$.
The main technical annoyance is in showing that the local models such as the one in Figure~\ref{fig.triple}, which for small~$k$ fit our low-dimensional pictures, behave as one would expect when $k$~is larger. As such a statement is unnecessary for our purposes, we will not pursue these details.

\begin{prop}[Reducing $1$-complexity]\label{prop.1cpx}
	Let $Q$~be a finite $2$-dimensional simplicial complex with $\V(Q) \subset \V(\Sd(M, \phi))$, and whose simplices are transverse.
	Suppose ${\langle Q\rangle_2 = 0}$ and ${\langle Q\rangle_1>0}$. Then there is a subdivision~$Q'$ of~$Q$ with $\V(Q') \subset \V(\Sd(M, \phi))$ and transverse simplices, such that $\langle Q'\rangle_2 = 0$ and $\langle Q'\rangle_1 <\langle Q\rangle_1$.
\end{prop}

\begin{proof}
	For each simplex in~$Q$ of maximal $1$-complexity~$N$ (whether it is an edge or a triangle), we perform the oriented surgery construction described in the proofs of Propositions~\ref{prop.distbound} and Proposition~\ref{prop.2cpx} on its vertex set. Note that in the case of three-fold oriented surgeries, there are no triple points, by our assumption that~$\langle Q\rangle_2 = 0$. It will be crucial for our intersection count that when performing the push-off step of the oriented surgeries, the $3$-fold surgeries (that is, the ones originating from triangles) be pushed farther than the two-fold ones (that is, coming from edges), as illustrated in Figure~\ref{fig.pushfarther}.
	
	\begin{figure}[h]
		\centering
		\def \svgwidth{0.7\linewidth}
		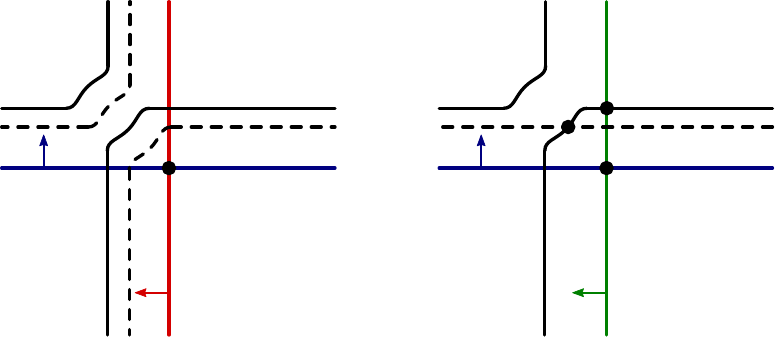
		\caption{The oriented surgery~$\Sigma_{012}$ at a triangle~$\{S_0, S_1, S_2\}$ of~$Q$, and the oriented surgery~$\Sigma_{ij}$ at one of its sides~$\{S_i, S_j\}$. Note that $\Sigma_{012}$~should lie farther from the original surfaces than each~$\Sigma_{ij}$. We depict the intersection pattern between $\Sigma_{012}$~and~$\Sigma_{ij}$ near~$S_i \cap S_j$ (left) and near the intersection of~$S_i$ with the third surface~$S_k$ (right).}
		\label{fig.pushfarther}
	\end{figure}
	
	We wish to decompose these oriented surgeries, producing new hypersurfaces representing~$\phi$ to be added to~$\V(Q')$. Let us begin with the edges.
	
	If $\{S_0, S_1\}$~is an edge of~$Q$ with complexity~$N$, then the corresponding oriented surgery~$\Sigma_{01}$ represents the class~$2\phi$. The argument given in the proof of Proposition~\ref{prop.distbound} shows that one can write $\Sigma_{01} = T_{01}^0 \sqcup T_{01}^1$ with each~$T_{01}^m$ representing~$\phi$, and moreover, for some $m \in \{0,1\}$, we have
	\[|T_{01}^m \cap S_0| = |T_{01}^m \cap S_1| \le \frac N 2.\]
	We take $T_{01}$~to be a~$T_{01}^m$ satisfying the above condition.
	
	The hypersurfaces constructed in this manner over all edges of~$Q$ with complexity~$N$ are now used for subdividing the $1$-skeleton of~$Q$: each such edge~$\{S_0, S_1\}$ is replaced by the edges $\{S_0, T_{01}\}$ and $\{S_1, T_{01}\}$. Thus this subdivision of the $1$-skeleton of~$Q$ has  $1$-complexity strictly lower than~$N$; our goal is to extend it to all of~$Q$.
	
	We argue as in the proof of Proposition~\ref{prop.2cpx} to claim that for each triangle $\sigma := \{S_0, S_1, S_2\}$ of~$Q$ with $\langle \sigma\rangle_1=N$, the oriented surgery~$\Sigma_{012}$ constructed above decomposes as
	\[\Sigma_{012} = T^0_{012} \sqcup T^1_{012} \sqcup T^2_{012},\]
	with each $T_{012}^m$ representing~$\phi$.
	Our goal now is to extend the subdivision of the $1$-skeleton of~$Q$ to all of~$Q$ by choosing an appropriate $m\in \{0,1,2\}$ and adding $T_{012} : = T_{012}^m$ to~$\V(Q')$. The triangles~$\sigma$ will then be replaced with the simplices~$\tau \cup \{T_{012}\}$, where $\tau$~is a simplex of the subdivision of the boundary of~$\sigma$. This is illustrated in Figure~\ref{fig.1subdv}.
	
	\begin{figure}[h]
		\centering
		\def \svgwidth{0.9\linewidth}
		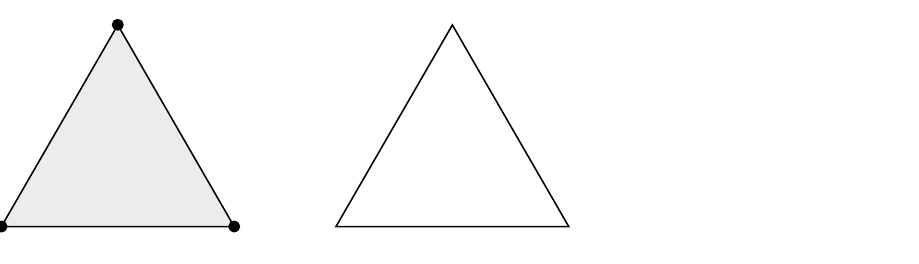
		\caption{Subdividing the simplices in~$Q$ of maximal $1$-complexity~$N$. Left: a triangle~$\sigma = \{S_0, S_1, S_2\}$ with $\langle \sigma\rangle_1 = N$; for this example, we assume $\{S_0, S_1\}$~and~$\{S_0, S_2\}$ are its sides with $1$-complexity~$N$. Middle: we begin by subdividing the $1$-skeleton of~$Q$ by adding ``midpoints'' to the edges of maximal complexity. Right: the subdivision of the $1$-skeleton is extended to all of~$Q$ by replacing each triangle~$\sigma$ with the simplices $\tau \cup \{T_{012}\}$, with $\tau$~ranging over the simplices in the subdivided boundary of~$\sigma$.}
		\label{fig.1subdv}
	\end{figure}
	
	The proof would be complete if we could show that for each triangle~$\sigma = \{S_0, S_1, S_2\}$ of~$Q$, there is a choice of~$m \in \{0,1,2\}$ such that all triangles in the subdivision given by $T_{012} :=T_{012}^m$, as explained above, have $1$-complexity strictly less than~$N$. We will prove a weaker assertion, which will nevertheless suffice:
	
	\begin{claim}
		There is $m\in \{0,1,2\}$ such that all triangles in the subdivision of~$\sigma$ given by $T_{012} :=T_{012}^m$ have $1$-complexity at most~$N$. Moreover, if some side of $\sigma$~has $1$-complexity strictly less than~$N$, then $m$~can be chosen so that all triangles in the subdivision have $1$-complexity strictly less than~$N$.
	\end{claim}
	
	Before proving this claim, we explain how it implies the statement of Proposition~\ref{prop.1cpx}. If all triangles of~$Q$ have a side with $1$-complexity strictly less than~$N$, then the conclusion follows immediately. Suppose then that there exists a triangle~$\sigma$ whose sides all have $1$-complexity~$N$, so the claim does not guarantee a strict decrease in $1$-complexity. The key observation is that this will no longer be true of the triangles formed when subdividing~$\sigma$. Indeed, all these triangles have an edge in the subdivided boundary of~$\sigma$, and those have $1$-complexity strictly less than~$N$. Hence, repeating the whole procedure of subdividing edges and triangles of $1$-complexity~$N$, the resulting subdivision~$Q'$ will satisfy~$\langle Q'\rangle_1 < N$.
	
	\begin{proof}[Proof of the claim]
		All we need to do is show that for some~$m$, all newly-formed edges in the subdivision of~$\sigma$ satisfy the claimed control on $1$-complexity. As already observed, independently of the choice of~$m$, all edges in the subdivided boundary of~$\sigma$ have~$1$-complexity strictly less than~$N$. We therefore need only show that for some~$m$, the edges containing~$T^m_{012}$ satisfy the asserted bound on $1$-complexity. The claim thus follows from the following two statements, which we will now show:
		\begin{enumerate}
			\item Independently of the choice of~$m$, the $1$-complexity of each newly-formed ``short edge'' is bounded by that of the opposite ``long edge''. Formally: let $m\in \{0,1,2\}$, let $\{S_i, S_j\}$ be a side of~$\sigma$ with complexity~$N$, and let $\{i,j,k\} = \{0,1,2\}$ (that is, the $i,j,k$ are three distinct indices). Then we have
			\[|T_{012}^m\cap T_{ij} | \le |T_{012}^m \cap S_{k}|.\]
			\item For some $m$, the newly-formed long edges satisfy the claimed bound on $1$-complexity. In other words, there exists $m \in \{0,1,2\}$ such that for all $i \in \{0,1,2\}$ we have 
			\[|T_{012}^m \cap S_i| \le N,\]
			with strict inequality if some side of~$\sigma$ has $1$-complexity strictly less than~$N$.
		\end{enumerate}
		
		For the first item: clearly $T^m_{012}$~only intersects~$T_{ij}$ near points where two among~$S_i$, $S_j$, $S_k$ meet. Thus the proof of this assertion is almost entirely contained in Figure~\ref{fig.pushfarther}. Indeed, the left side of that figure shows that near~$S_i \cap S_j$ there are no components of~$T_{012}^m \cap T_{ij}$, and on the right we see that each component of~$S_i \cap S_k$ contributes with at most one component to~$T_{012}^m \cap T_{ij}$. Explicitly, there is a contribution precisely if the sheet of~$\Sigma_{ij}$ parallel to~$S_i$ belongs to~$T_{ij}$ and the lower sheet of~$\Sigma_{012}$ belongs to~$T^m_{012}$. But when this happens, this component of~$S_i \cap S_k$ also contributes with one component to~$T_{012}^m \cap S_k$. The behavior near components of~$S_j \cap S_k$ is obviously similar. Overall, we conclude that components of~$T_{012}^m \cap T_{ij}$ correspond injectively to components of~$T_{012}^m \cap S_k$, and so $|T_{012}^m \cap T_{ij}| \le |T_{012}^m\cap S_{k}|$.
		
		For the second item: Let $\{i,j,k\} = \{0,1,2\}$. As in the proof of Proposition~\ref{prop.distbound}, the crucial observation is that each component of~$S_i \cap S_j$ contributes with exactly one component to~$T_{012}^m \cap S_i$ and one to~$T_{012}^m \cap S_j$, for precisely one~$m\in \{0,1,2\}$ (refer back to Figure~\ref{fig.orientedsum}, bottom right). Denoting this number of contributions by~$N_{ij}^m$, we thus have
		\[ N_{ij}^0 + N_{ij}^1 + N_{ij}^2 = |S_i \cap S_j| \le N.\]
		
		On the other hand, $|T_{012}^m \cap S_i|$~is the sum of the contributions from~$S_i \cap S_j$ and from~$S_i \cap S_k$. 
		In other words, the $1$-complexity of $\{T^m_{012}, S_i\}$ is given by
		\[|T_{012}^m \cap S_i|= N_{ij}^m + N_{ik}^m.\]
		
		Let us organize all these numbers into a grid, each column corresponding to a choice of~$m$, and each row to a side of~$\sigma$:
		\[\begin{matrix}
			N_{01}^0 & N_{01}^1 & N_{01}^2\\[1em]
			N_{02}^0 & N_{02}^1 & N_{02}^2\\[1em]
			N_{12}^0 & N_{12}^1 & N_{12}^2
		\end{matrix} \]
		The entries in each row add up to at most~$N$, so the sum of all entries on the grid is at most~$3N$, and thus some column~$m$ adds up to at most~$N$. Since the numbers~$|T_{012}^m \cap S_i|$ are precisely the sums of pairs of entries in the $m$-th column, this~$m$ satisfies the first part of item 2. Under the additional hypothesis that some side of~$\sigma$ has $1$-complexity strictly less than~$N$, the corresponding row adds up to strictly less than~$N$, and so the sum of all entries in the grid is strictly less than~$3N$. The same argument then yields~$m$ satisfying the stronger conclusion.  \phantom{\qedhere}
	\end{proof}
	With the claim justified, the proof of Proposition~\ref{prop.1cpx} is complete.
\end{proof}

Having established Propositions \ref{prop.2cpx}~and~\ref{prop.1cpx}, proving Proposition~\ref{prop.transimplyconn}~is straightforward:

\begin{proof}[Proof of Proposition~\ref{prop.transimplyconn}]
	Choose any vertex $T$ of~$\Sd(M, \phi)$, adjusted using Proposition~\ref{prop.transverse} so that $T \not \in \V(P)$ and the set $\V(P) \cup \{T\}$ is transverse. For $Q$~the ($2$-dimensional) simplicial cone of~$P$ with cone point~$T$, it suffices to show that the inclusion of $P \into \Sd(M, \phi)$ extends to some subdivision of~$Q$. By an iterated application of Proposition~\ref{prop.2cpx}, we may subdivide~$Q$ into a simplicial complex~$Q'$ with $\V(Q') \subset \V(\Sd(M, \phi))$ and transverse simplices, such that $\langle Q'\rangle_2=0$. Then an iterated application of Proposition~\ref{prop.1cpx} subdivides $Q'$~into $Q''$~again with $\V(Q'') \subset \V(\Sd(M, \phi))$ and transverse simplices, such that $\langle Q''\rangle_1=0$. By definition this means that all simplices of~$Q''$ are also simplices of~$\Sd(M, \phi)$, so the inclusion $\V(Q'') \into \V(\Sd(M, \phi))$ is a map of simplicial complexes.
\end{proof}

Before establishing simple connectedness of~$\Sd(M, \phi)$, we need an additional observation that reduces us to the setting where the involved hypersurfaces form a transverse set:

\begin{lem} [Perturbing to transverse subcomplexes] \label{lem.transubcpx}
	Every finite subcomplex~$P$ of~$\Sd(M, \phi)$ is isotopic to an isomorphic subcomplex with transverse vertex set.
\end{lem}
\begin{proof}
	The lemma is a consequence of the following statement:
	\begin{claim}
		Let $\mathcal{U} \subset \V(P)$ be a transverse subset and let $S \in \V(P) \setminus \mathcal{U}$. Then there exists a hypersurface $S' \in \V(\Sd(M, \phi)) \setminus \V(P)$ such that:
		\begin{itemize}
			\item the set $\mathcal{U}\cup \{S'\}$ is transverse, and 
			\item the simplicial complex~$P'$ obtained from~$P$ by replacing~$S$ with~$S'$ (in~$\V(P)$ and in all simplices containing~$S$) is a subcomplex of~$\Sd(M, \phi)$ isotopic to~$P$.
		\end{itemize}
	\end{claim}
	
	If we prove this claim, then starting with $\mathcal{U}=\emptyset$, we can iteratively enlarge~$\mathcal{U}$ by replacing each vertex~$S$ of~$P$ by~$S'$ and adding~$S'$ to~$\mathcal U$, ultimately reaching a subcomplex of~$\Sd(M, \phi)$ isotopic to~$P$, whose vertex set is the transverse set~$\mathcal{U}$.
	
	\begin{proof}[Proof of the Claim]
		Let $S_0$ be a push-off of~$S$ disjoint from all the hypersurfaces in~$\V(P)$ that are disjoint from~$S$. We construct~$S'$ by applying Proposition~\ref{prop.transverse} to~$S_0$ and the set~$\mathcal{U}$. Since $S'$~can be chosen to live in an arbitrarily small neighborhood of~$S_0$, we can take $S'$~disjoint from~$S$ and from all hypersurfaces in~$\V(P)$ that are disjoint from~$S$. Hence, for every simplex~$\sigma$ of $P$~containing~$S$, the set~$\sigma':=\sigma \cup \{S'\} \setminus \{S\}$  is a simplex of~$\Sd(M, \phi)$, and thus $P'$~is a subcomplex of~$\Sd(M, \phi)$.
		
		Moreover, $S'$~being disjoint from~$S$ implies that every such $\sigma$, also $\sigma \cup \{S'\}$ is a simplex of~$\Sd(M, \phi)$. Therefore, the path from~$S$ to~$S'$ along the edge $\{S, S'\}$ of~$\Sd(M, \phi)$ extends to an isotopy from each~$\sigma$ to~$\sigma'$, fixing the remaining vertices of~$\sigma$. We extend this as the constant isotopy away from simplices containing~$S$, producing an isotopy from~$P$ to~$P'$. \phantom{\qedhere}
	\end{proof}
	
	Having proved the claim, Lemma~\ref{lem.transubcpx} follows as explained above.
\end{proof}

Theorem~\ref{thm.simplyconn} now follows directly:

\begin{proof}[Proof of Theorem~\ref{thm.simplyconn}]
	By cellular approximation, in order to prove simple connectedness of~$\Sd(M, \phi)$, it suffices to show that the inclusion~$P \into \Sd(M, \phi)$ of every finite $1$-subcomplex~$P$ is null-homotopic. Lemma~\ref{lem.transubcpx} reduces us to the case where~$P$ has transverse vertex set, and Proposition~\ref{prop.transimplyconn} shows that every such~$P$ is null-homotopic in~$\Sd(M, \phi)$.
\end{proof}

\section{Dropping the daggers}\label{sec.dropdagger}

We now discuss the implications of the results in the preceding sections to the complex~$\S(M, \phi)$, defined similarly to~$\Sd(M, \phi)$, except that everything is taken ``up to proper isotopy''.

\begin{dfn}\label{dfn.S}
	Let $M$~be an oriented compact smooth $n$-manifold and let $\phi\in\h_{n-1}(M, \bd M)$. We denote by~$\S(M, \phi)$ the simplicial complex defined as follows:
	\begin{itemize}
		\item The vertices are the (possibly disconnected) oriented smooth properly embedded hypersurfaces in~$M$ representing~$\phi$, up to smooth proper isotopy.
		\item A set of $k+1$ isotopy classes forms a $k$-simplex if those classes admit representatives that are pairwise disjoint.
	\end{itemize}
\end{dfn}

There is an obvious map of simplicial complexes~$p\colon \Sd(M,\phi) \to \S(M, \phi)$ sending each vertex to its isotopy class. This map is clearly surjective on simplices of all dimensions.

If $S_0, S_1 \subset M$ are hypersurfaces representing~$\phi$, we denote by $d_\S(S_0, S_1)$ the path-length distance in the $1$-skeleton of~$\S(M, \phi)$ between their isotopy classes. From the results in Section~\ref{sec.conn} we easily deduce the following corollary:

\begin{cor}[Connectedness of~$\S(M, \phi)$]\label{cor.nodagger}
	Let $M$~be an oriented compact smooth $n$-manifold and let $\phi\in\h_{n-1}(M, \bd M)$. Then $\S(M, \phi)$~is connected. Moreover, if $S_0, S_1$~are transverse representatives of two vertices of~$\S(M, \phi)$ and $k\in \NN$ satisfies $|S_0 \cap S_1|<2^k$, then $\dS(S_0, S_1) \le 2^k$.
\end{cor}
\begin{proof}
	Since $p\colon \Sd(M,\phi) \to \S(M, \phi)$ is surjective on vertices, Theorem~\ref{thm.connected} immediately implies the first part. The second is a direct consequence of Proposition~\ref{prop.distbound}.
\end{proof}

Similarly, for the $3$-dimensional setting, we consider the following simplicial complex:

\begin{dfn}\label{dfn.T}
	Let $M$~be an irreducible and boundary-irreducible $3$-manifold and let $\phi\in\h_{2}(M, \bd M)$. We define~$\T(M, \phi)$ to be the full subcomplex of~$\S(M, \phi)$ spanned by the isotopy classes of Thurston norm-realizing surfaces.
\end{dfn}

Let $d_\T(S_0, S_1)$ denote the distance between the classes of surfaces~$S_0, S_1$ in the $1$-skeleton of~$\T(M, \phi)$. Restricting~$p$ to~$\Td(M, \phi)$ yields a surjective map $\Td(M, \phi) \to \T(M, \phi)$, and we have:

\begin{cor}[Connectedness of~$\T(M, \phi)$]\label{cor.3mfdnodagger}
	Let $M$~be an irreducible and boundary-irreducible oriented compact smooth $3$-manifold, and let $\phi\in\h_2(M, \bd M)$. Then $\T(M, \phi)$ is connected. Moreover, if $S_0, S_1$~are transverse representatives of vertices of~$\T(M, \phi)$ and $k\in \NN$ satisfies  $\ess(S_0 , S_1)<2^k$, then $\dT(S_0, S_1) \le 2^k$.
\end{cor}

We remind the reader that by Lemma~\ref{lem.incompressible}, the~$S_i$ are incompressible and thus each component of~$S_0 \cap S_1$ is essential either in both or in neither of them. The quantity~$\ess(S_0, S_1)$ denotes the number of essential such components.

\begin{proof}
	Surjectivity of the map $\Td(M, \phi) \to \T(M, \phi)$ and the fact that $\Td(M, \phi)$~is connected (Theorem~\ref{thm.3mfdcase}) immediately imply the first part. For the distance bound: by an iterated application of Lemma~\ref{lem.inessential}, we may assume $S_0$~and~$S_1$ intersect only essentially (without changing $\ess(S_0, S_1)$). After this reduction, we have $|S_0 \cap S_1| = \ess(S_0, S_1) < 2^k$ and the statement follows from Proposition~\ref{prop.3mfddistbound}.
\end{proof}

We now turn to the question of whether simple connectedness of~$\Sd(M, \phi)$, established as Theorem~\ref{thm.simplyconn}, descends to~$\S(M, \phi)$. Answering this question will require additional topological input, and we obtain an answer only when $M$ has dimension~$2$.

\begin{cor}[Simple connectedness in the case of surfaces]\label{cor.2mfdcase}
	Let $M$ be an oriented compact smooth surface and let $\phi \in \h_1(M, \bd M)$. Then $\S(M, \phi)$ is simply connected.
\end{cor}

The remainder of this section is dedicated to proving Corollary~\ref{cor.2mfdcase}. Throughout, $M$~ will denote an oriented compact smooth surface, and $\phi\in \h_{1}(M,\bd M)$~a $1$-dimensional homology class.

The key to transferring simple connectedness of~$\Sd(M, \phi)$ to~$\S(M, \phi)$ is the following assertion.

\begin{lem}[Paths within an isotopy class]\label{lem.pathwithinisoclass}
	Let $S, T$ be a transverse pair of oriented properly embedded $1$-dimensional submanifolds of~$M$. If $S$~and~$T$ are in the same proper isotopy class, then there exists a sequence
	\[S = S_0, S_1, \ldots, S_k = T\]
	of oriented properly embedded $1$-dimensional submanifolds of~$M$ all in the same proper isotopy class, such that for each $i \in \{1,\ldots , k\}$, we have $S_{i-1} \cap S_{i} = \emptyset$.
	
	Moreover, if $S$~and~$T$ are part of a transverse family~$\mathcal U$ of properly embedded $1$-dimensional submanifolds of~$M$, then the~$S_i$ can be chosen so that $\mathcal U \cup \{S_1, \ldots, S_{k-1}\}$~is transverse.
\end{lem}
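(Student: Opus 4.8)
The plan is to leverage the hypothesis that $S$ and $T$ are properly isotopic: there is an ambient isotopy $(\Phi_t)_{t\in[0,1]}$ of $M$ with $\Phi_0 = \id$ and $\Phi_1(S) = T$. Subdivide $[0,1]$ finely enough that on each subinterval $[t_{j},t_{j+1}]$ the submanifold $\Phi_{t}(S)$ moves by only a "small" amount. Concretely, choose a proper tubular neighborhood of $S$ and refine the partition so that $\Phi_{t_{j+1}}\circ\Phi_{t_j}^{-1}$ is $C^\infty$-close to the identity, hence carries $\Phi_{t_j}(S)$ into an arbitrarily small neighborhood of itself. This produces a sequence of representatives $R_j := \Phi_{t_j}(S)$, all in the common isotopy class, with consecutive $R_j$, $R_{j+1}$ cobounding a thin "I-bundle" region. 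The issue is that consecutive $R_j$ need not be \emph{disjoint} — they may intersect transversely in finitely many points after a small perturbation. So the real content of the lemma is the local move: given two transverse curves $S'$, $S''$ in the same isotopy class that are already very close (one lies in a tubular neighborhood of the other), interpolate between them by a short chain of \emph{pairwise disjoint} curves.

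For this local move I would argue in the tubular neighborhood $N \cong S' \times \RR$ (or the appropriate twisted $I$-bundle, but in dimension $2$ with everything oriented it is a product). The curve $S''$, sitting inside $N$ and isotopic to $S' = S'\times\{0\}$, is the graph of a function — or, more precisely, each component of $S''$ is a section-like curve over a component of $S'$, possibly wrapping, but being isotopic to the zero section forces it to be (properly isotopic within $N$ to) an actual graph $\{(x, f(x))\}$. First push $S''$ to such a graph by a small isotopy (using a straightening/$1$-dimensional Morse-theoretic argument on the components: a properly embedded arc or circle in $S'\times\RR$ isotopic to $S'\times\{0\}$ can be isotoped to a graph by cancelling the max–min pairs of the height function, each cancellation being a small isotopy that can be kept transverse to the ambient family $\mathcal U$). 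Once $S''$ is a graph over $S'$, insert the single intermediate curve $S'_{\mathrm{mid}} := \{(x, \max(f(x), \epsilon) + \epsilon)\}$ — or more cleanly, a constant section $S'\times\{c\}$ with $c$ larger than $\max f$ and smaller than the top of $N$: this is disjoint from $S'\times\{0\}$ and, after a further small isotopy pushing it down past $f$, disjoint from $S''$. So the chain $S' , \, S'\times\{c\} , \, S''$ does the job with $k=2$ for the local step, and concatenating over all $j$ gives the global sequence.

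The transversality addendum is handled by invoking Proposition~\ref{prop.transverse'} at every step: each intermediate curve we produce is first constructed, and then perturbed by an arbitrarily small proper isotopy to make the union with $\mathcal U$ (and with the previously constructed intermediate curves) transverse. Since all the moves above — subdividing the ambient isotopy, straightening $S''$ to a graph, inserting a constant section — are achieved by isotopies that can be taken as small as we like, such perturbations do not destroy the disjointness of consecutive curves (disjointness is an open condition) nor the property of lying in the prescribed isotopy class. One must order things carefully: build the finite list of intermediate curves one at a time from $S$ toward $T$, and when adding the $i$-th curve, apply Proposition~\ref{prop.transverse'} with the transverse set $\mathcal U \cup \{S_1, \ldots, S_{i-1}\}$ already in hand, choosing the perturbation small enough to retain $S_{i-1}\cap S_i = \emptyset$.

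I expect the main obstacle to be the "straightening to a graph" step: making precise, in the smooth category and with control over the size of the isotopy, the claim that a properly embedded $1$-submanifold of $S'\times\RR$ that is isotopic to the zero section can be isotoped to a graph by a small isotopy. For circle components one can use that the projection $S''\to S'$ is a degree-$\pm1$ map homotopic to a diffeomorphism, and cancel critical points of the height function in pairs; for arc components the endpoints are pinned on $\bd M \cap N$ and the same Morse-cancellation argument applies relative to the boundary. The remaining steps — producing the ambient isotopy, subdividing it, inserting constant sections, and applying Proposition~\ref{prop.transverse'} — are routine, though the bookkeeping of "keep every isotopy small enough" must be stated with care.
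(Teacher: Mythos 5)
Your approach is genuinely different from the paper's. The paper uses the bigon criterion (Theorem~\ref{thm.bigoncrit}): if components $\sigma \subset S$, $\tau \subset T$ meet, they cannot be in minimal position (one can isotope $S$ off $T$), so they form a (half-)bigon; using it, one isotopes $S$ to a new curve disjoint from $S$ and meeting $T$ strictly fewer times, and iterates. In that proof, the intermediate curves all stay near $S\cup T$ and the length of the chain is bounded by $|S\cap T|$. You instead extend the proper isotopy to an ambient one $\Phi_t$, subdivide $[0,1]$ finely, and interpolate between consecutive $R_j:=\Phi_{t_j}(S)$ by inserting parallel sections in tubular neighborhoods. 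Both strategies can be made to work, but yours needs one repair.

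The step you yourself flag as the main obstacle, straightening $S''$ to a graph, is both a genuine gap and an avoidable detour. As written it is problematic: if you straighten $R_{j+1}$ by a small isotopy you land on the straightened curve rather than on $R_{j+1}$ itself, so the chain toward $T$ is broken; and the Morse cancellations are not obviously ``small'' or realizable by parallel push-offs. The fix is to subdivide more finely: if $\psi_j:=\Phi_{t_{j+1}}\circ\Phi_{t_j}^{-1}$ is sufficiently $C^1$-close to the identity, then the composite $\pi_j\circ\psi_j|_{R_j}\colon R_j\to R_j$ (with $\pi_j$ the tubular-neighborhood projection onto $R_j$) is $C^1$-close to $\mathrm{id}_{R_j}$, hence a diffeomorphism, which is exactly the condition for $R_{j+1}$ to already be a graph over $R_j$. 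Uniformity of the required closeness over all $j$ follows from compactness of $[0,1]$ and openness of the diffeomorphism condition, so a single fine subdivision works; no straightening is needed, and the insertion of the constant section $R_j\times\{c_j\}$ with $c_j$ beyond the range of the graphing function then gives the local chain $R_j$, $R_j\times\{c_j\}$, $R_{j+1}$ directly. One thing you should then confront: once the straightening is removed, nothing in your argument uses $\dim M=2$ (the normal bundle is trivial in any dimension for oriented hypersurfaces in oriented $M$, and isotopy extension and tubular neighborhoods are general). That would answer Question~\ref{quest.MillionDollarQuest} affirmatively, which the authors present as open after noting that their bigon-based method does not generalize. Either you have found an argument the authors missed, or there is a hidden dimension-specific assumption somewhere (candidates: the proper isotopy extension theorem for manifolds with boundary, or the uniform graph estimate along the isotopy) — you should track this down before trusting the proof.
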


Our proof of Lemma~\ref{lem.pathwithinisoclass} relies on the bigon criterion (Theorem~\ref{thm.bigoncrit} below), a tool available specifically for manifolds of dimension~$2$. 

\begin{dfn}
	Let $M$~be a compact smooth surface, and $\sigma,\tau$~a transverse pair of properly embedded connected $1$-submanifolds of~$M$ (so each of~$\sigma, \tau$ is either a circle or an arc).
	\begin{itemize}
		\item The submanifolds~$\sigma, \tau$ are said to be in \textbf{minimal position} if they cannot be properly isotoped to submanifolds~$\sigma', \tau'$, respectively, such that $|\sigma' \cap \tau'| < |\sigma\cap \tau|$.
		\item We say that $\sigma, \tau$ form a \textbf{bigon} (Figure~\ref{fig.bigon}, left) if there exist two distinct points~$p, q\in \sigma\cap \tau$ and arcs~$\alpha \subset \sigma$, $\beta \subset \tau$ connecting $p$~and~$q$, such that $\alpha\cup\beta$~is a circle (with corners) bounding a disc in~$M$.
		\item We say that $\sigma, \tau$ form a \textbf{half-bigon} (Figure~\ref{fig.bigon}, right) if there exist
		\begin{itemize}
			\item points~$p\in\sigma\cap \tau$, $q_\sigma \in \sigma\cap \bd M$, $q_\tau \in \tau\cap \bd M$,
			\item an arc~$\alpha \subset \sigma$ from~$p$ to~$q_\sigma$,
			\item an arc~$\beta \subset \tau$ from~$p$ to~$q_\tau$, and
			\item an arc~$\gamma \subset \bd M$ from~$q_\sigma$ to~$q_\tau$,
		\end{itemize}
		such that $\alpha\cup\beta\cup \gamma$~is a circle (with corners) bounding a disc in~$M$.
	\end{itemize}
\end{dfn}

\begin{figure}[h]
	\centering
	\def \svgwidth{0.8\linewidth}
	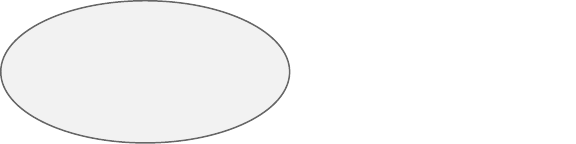
	\caption{Example of a bigon (left) and a half-bigon (right).}
	\label{fig.bigon}
\end{figure}

If $\sigma$~and~$\tau$ form a bigon or a half-bigon, then they are certainly not in minimal position: indeed, after choosing a (half-)bigon that is innermost (meaning, for which the disc in the definition is innermost), we may use it to isotope a small neighborhood of the arc~$\alpha$ past~$\beta$, and then push everything slightly off of~$\sigma$ in the direction away from the bigon. This produces a new $1$-submanifold~$\sigma'$ properly isotopic to~$\sigma$ and having fewer intersections with~$\tau$ (Figure~\ref{fig.bigonfix}). The bigon criterion is a converse to this statement.

\begin{figure}[h]
	\centering
	\def \svgwidth{0.8\linewidth}
\begingroup%
  \makeatletter%
  \providecommand\color[2][]{%
    \errmessage{(Inkscape) Color is used for the text in Inkscape, but the package 'color.sty' is not loaded}%
    \renewcommand\color[2][]{}%
  }%
  \providecommand\transparent[1]{%
    \errmessage{(Inkscape) Transparency is used (non-zero) for the text in Inkscape, but the package 'transparent.sty' is not loaded}%
    \renewcommand\transparent[1]{}%
  }%
  \providecommand\rotatebox[2]{#2}%
  \newcommand*\fsize{\dimexpr\f@size pt\relax}%
  \newcommand*\lineheight[1]{\fontsize{\fsize}{#1\fsize}\selectfont}%
  \ifx\svgwidth\undefined%
    \setlength{\unitlength}{296.34004608bp}%
    \ifx\svgscale\undefined%
      \relax%
    \else%
      \setlength{\unitlength}{\unitlength * \real{\svgscale}}%
    \fi%
  \else%
    \setlength{\unitlength}{\svgwidth}%
  \fi%
  \global\let\svgwidth\undefined%
  \global\let\svgscale\undefined%
  \makeatother%
  \begin{picture}(1,0.27079054)%
    \lineheight{1}%
    \setlength\tabcolsep{0pt}%
    \put(0,0){\includegraphics[width=\unitlength,page=1]{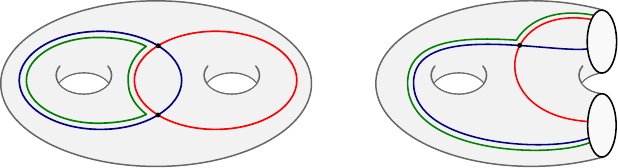}}%
    \put(0.32260268,0.03711671){\color[rgb]{1,0,0}\makebox(0,0)[lt]{\lineheight{1.25}\smash{\begin{tabular}[t]{l}$\tau$\end{tabular}}}}%
    \put(0.17074979,0.03711671){\color[rgb]{0,0,0.50196078}\makebox(0,0)[lt]{\lineheight{1.25}\smash{\begin{tabular}[t]{l}$\sigma$\end{tabular}}}}%
    \put(0.8341399,0.09033181){\color[rgb]{1,0,0}\makebox(0,0)[lt]{\lineheight{1.25}\smash{\begin{tabular}[t]{l}$\tau$\end{tabular}}}}%
    \put(0.73324561,0.07093681){\color[rgb]{0,0,0.50196078}\makebox(0,0)[lt]{\lineheight{1.25}\smash{\begin{tabular}[t]{l}$\sigma$\end{tabular}}}}%
    \put(0.17913848,0.0862268){\color[rgb]{0,0.50196078,0}\makebox(0,0)[lt]{\lineheight{1.25}\smash{\begin{tabular}[t]{l}$\sigma'$\end{tabular}}}}%
    \put(0.80792738,0.21677958){\color[rgb]{0,0.50196078,0}\makebox(0,0)[lt]{\lineheight{1.25}\smash{\begin{tabular}[t]{l}$\sigma'$\end{tabular}}}}%
  \end{picture}%
\endgroup%

	\caption{If two curves~$\sigma,\tau$ form a bigon, one can produce a new curve~$\sigma'$ that is isotopic to~$\sigma$ and has fewer intersections with~$\tau$ (left). Similarly for a half-bigon (right).}
	\label{fig.bigonfix}
\end{figure}

\begin{thm}[The Bigon Criterion]\label{thm.bigoncrit}
	Let $M$~be an oriented compact smooth surface, and let $\sigma,\tau$~be a transverse pair of properly embedded connected $1$-submanifolds of~$M$. If $\sigma$~and~$\tau$ are not in minimal position, then they form a bigon or a half-bigon.
\end{thm}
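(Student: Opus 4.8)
The plan is to prove the Bigon Criterion by a standard covering-space argument, following the classical proof for closed curves (as in Farb--Margalit) but carefully handling the boundary case that produces half-bigons. Suppose $\sigma$ and $\tau$ are transverse, connected, properly embedded $1$-submanifolds that are \emph{not} in minimal position; we must produce a bigon or a half-bigon. The first step is to reduce to the case where $\sigma$ and $\tau$ are \emph{essential} (not inessential in the sense of Definition~\ref{dfn.irred}): if, say, $\sigma$ is an inessential circle bounding a disk $D\sub M$, then an innermost intersection arc of $\tau$ with $D$ (together with a subarc of $\sigma$) immediately cuts off a bigon or half-bigon, and we are done; similarly if $\sigma$ is an inessential arc. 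So assume both are essential. One also disposes quickly of the degenerate situations where $|\sigma\cap\tau|$ is already $0$ or where $\sigma$, $\tau$ are isotopic to disjoint curves for trivial reasons.

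The heart of the argument is to pass to the universal cover $\pi\colon \tilde M \to M$ (or, when $M$ has boundary, to first double $M$ along $\bd M$ to get a closed surface $\hat M$, work in $\tilde{\hat M}$, and then use the reflection symmetry to keep track of where $\bd M$ sits). Lift $\sigma$ and $\tau$ to (disjoint unions of) properly embedded lines/arcs in $\tilde M$. The key geometric input is that since $\sigma$, $\tau$ are essential, each lift $\tilde\sigma$ is a properly embedded line separating $\tilde M$ into two half-planes, and two distinct lifts of the same curve intersect at most once (this uses that $M$ is a surface and the curves are essential, via the fact that the relevant covering-space pieces are disks/half-planes). Now, ``not in minimal position'' is translated homotopically: there exist lifts $\tilde\sigma$ of $\sigma$ and $\tilde\tau$ of $\tau$ that intersect in at least two points (if every pair of lifts met at most once, a counting argument via the algebraic intersection number would show $|\sigma\cap\tau|$ is already minimal). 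Pick two intersection points $p,q$ of $\tilde\sigma\cap\tilde\tau$ that are \emph{adjacent} along $\tilde\sigma$ (consecutive among $\tilde\sigma\cap\tilde\tau$ on the line $\tilde\sigma$); the subarcs $\tilde\alpha\sub\tilde\sigma$ and $\tilde\beta\sub\tilde\tau$ between $p$ and $q$ bound a disk region in $\tilde M$ by the Jordan-curve/Schoenflies argument in the plane (or half-plane). Its interior may still meet other lifts of $\sigma$ or $\tau$; an innermost-disk induction on those intersections produces an \emph{embedded} bigon (or, if the innermost disk abuts the boundary line $\bd\tilde M$, an embedded half-bigon) in $\tilde M$, and pushing it down by $\pi$ gives an immersed bigon/half-bigon in $M$. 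A final innermost-argument in $M$ upgrades the immersed (half-)bigon to an embedded one, completing the proof.

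The main obstacle, and the place where dimension $2$ and the boundary genuinely interact, is the boundary bookkeeping: one must ensure that when an innermost region in the cover touches $\bd\tilde M$, the three arcs $\tilde\alpha\sub\tilde\sigma$, $\tilde\beta\sub\tilde\tau$, $\tilde\gamma\sub\bd\tilde M$ actually close up into a disk (rather than, say, a region bounded by more boundary arcs), and that this descends correctly. Using the double $\hat M$ is the cleanest way to manage this: in $\hat M$ there is no boundary, the closed-surface proof yields an honest bigon, and then one checks that a bigon in $\hat M$ meeting the mirror curve $\bd M$ either avoids it entirely (giving a bigon in $M$) or is cut by it into a half-bigon in $M$; the reflection involution guarantees the arc of $\bd M$ appearing is a single arc. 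I would also need to be slightly careful that $\sigma$, $\tau$ being arcs (not circles) is handled — their lifts are then arcs in $\tilde{\hat M}$ with endpoints on lifts of $\bd M$, but the separation and ``meet at most once'' properties still hold for essential arcs, so the argument goes through mutatis mutandis.

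I expect the write-up to be somewhat long because each innermost-disk induction (first in $\tilde M$ to embed the bigon there, then in $M$) needs to be spelled out, and the reduction to essential curves plus the doubling trick each require a short verification; but no single step is deep, and all of it is in the spirit of the classical proof. The one genuinely delicate point to get right is the claim that non-minimal position implies the existence of two lifts meeting twice — this is where the hypothesis is actually used, and it relies on the combinatorial count of intersections being computed lift-by-lift, which in turn needs ``$M$ is a surface'' so that lifts are lines separating a plane.
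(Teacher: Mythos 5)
The paper does not actually give a proof of the Bigon Criterion; it cites Farb--Margalit for the closed case and remarks that the argument adapts to surfaces with boundary. Your proposal reproduces precisely that strategy---the universal-cover argument with the boundary handled by doubling---so the overall route is the one the paper intends, and the plan is sound.

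There is, however, one step that would not survive as written. You assert that if every pair of lifts of $\sigma$ and $\tau$ met at most once, then ``a counting argument via the algebraic intersection number'' would force $|\sigma\cap\tau|$ to be minimal. The algebraic (signed) intersection number of $\sigma$ and $\tau$ in $M$ can vanish even when the geometric intersection number is positive, because signs cancel, so it cannot certify minimal position. The invariant Farb--Margalit actually count is the number of \emph{linked} pairs of lifts, modulo the deck action: a pair of lifts is linked when the endpoints at infinity of one separate the endpoints at infinity of the other (using a chosen hyperbolic or Euclidean structure, or just the topology of the circle/line at infinity). Linking is a homotopy invariant, linked pairs meet an odd number of times, the linked-pair count is always a lower bound for $|\sigma\cap\tau|$, and when every pair of lifts meets at most once it equals $|\sigma\cap\tau|$---this is what forces minimal position. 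Replacing your ``algebraic intersection number'' step with this linking argument is essential; as stated the parenthetical is false. The remaining boundary bookkeeping via the doubled surface $\hat M$ is plausible but genuinely needs the verification you flag (in particular that a bigon in $\hat M$ meeting the mirror curve can be cut down to an honest half-bigon in $M$---one should work with an innermost bigon among all those formed by $\hat\sigma,\hat\tau$, minimizing intersections with $\bd M$, rather than invoking the reflection symmetry, since a given bigon need not be $r$-invariant). With these repairs the proof goes through.
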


For a proof of the bigon criterion in the closed case, see the book of Farb and Margalit \cite[Proposition~1.6]{FM12}. The same argument can be adapted to the case of surfaces with boundary.

\begin{proof}[Proof of Lemma~\ref{lem.pathwithinisoclass}]
	Suppose $\sigma, \tau$ are components of $S,T$, respectively, that have non-empty intersection. We first note that $\sigma, \tau$~are not in minimal position: indeed, if we properly isotope~$S$ to~$T$ and then push it slightly along the positive direction of the normal bundle of~$T$, then we will have isotoped~$S$ to be disjoint from~$T$, and thus also~$\sigma$ to be disjoint from~$\tau$. Hence, by the bigon criterion, $\sigma, \tau$ form a (half-)bigon.
	
	By the Diffeotopy Extension Theorem \cite[Theorem~2.4.6]{Wal68}, the isotopy from~$\sigma$ to~$\sigma'$ illustrated in Figure~\ref{fig.bigonfix} can then be extended to an ambient isotopy of~$M$ supported in a small neighborhood of the union of~$\sigma$ and the (half-)bigon. This ambient isotopy restricts to a proper isotopy from~$S$ to a proper $1$-submanifold~$S_1$ disjoint from~$S$. This submanifold~$S_1$ is transverse to~$T$ and satisfies ${|S_1 \cap T| < |S\cap T|}$. Applying Proposition~\ref{prop.transverse}, we may if necessary perturb~$S_1$ to make $\mathcal U \cup \{S_1\}$~a transverse set. Since $S_1$~is transverse to~$T$, a small enough perturbation will not change the topology of~$S_1 \cap T$, and in particular no new intersections between $S_1$~and~$T$ are formed.
	
	This procedure can be iterated until all intersections are removed.
\end{proof}

With this extra topological input, the problem of showing that finite $1$-subcomplexes of~$\S(M, \phi)$ are null-homotopic can be ``lifted along~$p$ to~$\Sd(M, \phi)$'':

\begin{lem}[Lifting $1$-subcomplexes]\label{lem.polylift}
	Let $M$~be an oriented compact smooth surface and let $\phi\in \h_1(M, \bd M)$. Any finite $1$-subcomplex~$P$ of~$\S(M, \phi)$ is the $p$-image of a transverse $1$-subcomplex~$\tilde P$ of~$\tilde{\S}(M,\phi)$ such that the restriction~$p|_{\tilde P}\colon \tilde P \to P$ is a homotopy equivalence.
\end{lem}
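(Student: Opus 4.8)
The plan is to build $\tilde P$ directly over the given subcomplex $P$, vertex by vertex and edge by edge, choosing actual submanifolds in the isotopy classes named by the vertices of $P$ and actual isotopies realizing the incidences. Let the vertices of $P$ be isotopy classes $v_1, \ldots, v_r$, and let the edges of $P$ record which pairs $v_a, v_b$ admit disjoint representatives. First I would pick, for each vertex $v_a$ of $P$, a representative hypersurface; applying Proposition~\ref{prop.transverse} repeatedly, I can arrange that the whole collection $\{S_1, \ldots, S_r\}$ is transverse. These will \emph{not} in general be pairwise disjoint along the edges of $P$, so they do not yet span a copy of $P$ inside $\tilde\S(M,\phi)$; the point of the construction is to interpolate.

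For each edge $e = \{v_a, v_b\}$ of $P$, fix disjoint representatives $S_a^e, S_b^e$ of the two classes (these exist by definition of $P$). Now use Lemma~\ref{lem.pathwithinisoclass}: since $S_a$ and $S_a^e$ are properly isotopic (both represent $v_a$), and live in a transverse family together with everything chosen so far, there is a finite chain $S_a = S_a^{(0)}, S_a^{(1)}, \ldots, S_a^{(p)} = S_a^e$ of submanifolds in the class $v_a$, consecutive ones disjoint, with the whole chain transverse to the ambient family. Do the same for the $v_b$-end, producing a chain from $S_b$ to $S_b^e$. Concatenating the $v_a$-chain, then the edge $\{S_a^e, S_b^e\}$ (whose endpoints are disjoint by choice), then the reversed $v_b$-chain, gives a path $\pi_e$ in $\tilde\S(M,\phi)$ from $S_a$ to $S_b$. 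Because Lemma~\ref{lem.pathwithinisoclass} lets us keep the whole configuration transverse, all these paths, for all edges $e$, can be taken simultaneously transverse to one another; let $\tilde P$ be the union of the vertices $S_1, \ldots, S_r$ and all the path-edges $\pi_e$. By construction $\tilde P$ is a transverse $1$-subcomplex of $\tilde\S(M,\phi)$.

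It remains to identify $p|_{\tilde P}$ and check it is a homotopy equivalence onto $P$. Every vertex of $\tilde P$ in the chain $\pi_e$ lies in the isotopy class of one of the two endpoints of $e$, hence maps under $p$ into the closed edge $e \subset P$; the endpoints $S_a$ map to $v_a$. Thus $p|_{\tilde P}$ maps each path $\pi_e$ into the $1$-cell $e$, sending its two ends to the two ends of $e$, and is the identity on the vertex set $\{v_1,\ldots,v_r\}$. Collapsing each $\pi_e$ to its image $e$ is a homotopy equivalence (a path mapping to an interval by a map that is injective, hence a homeomorphism, on endpoints, and constant-to-monotone on the interior, deformation retracts onto the interval), and these collapses can be performed simultaneously since the $\pi_e$ meet only at the common vertices $S_a$. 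Hence $p|_{\tilde P}\colon \tilde P \to P$ is a homotopy equivalence, and $P = p(\tilde P)$, as required.

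\textbf{Main obstacle.} The delicate point is the \emph{simultaneous} transversality bookkeeping: at each application of Lemma~\ref{lem.pathwithinisoclass} and of Proposition~\ref{prop.transverse}, one must enlarge the ambient transverse family by the newly produced intermediate submanifolds without disturbing the disjointness relations already arranged (in particular without creating spurious intersections between consecutive members of a chain, or between a chain and the disjoint edge-representatives $S_a^e, S_b^e$). The statement of Lemma~\ref{lem.pathwithinisoclass} is designed exactly to permit this—its ``moreover'' clause keeps a prescribed transverse family transverse, and the perturbations can be taken small enough not to alter which pairs are disjoint—so the obstacle is really one of careful ordering of the construction rather than a genuine difficulty, but it is where the proof must be written with care.
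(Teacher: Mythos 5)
Your proposal is correct and takes essentially the same route as the paper: lift edges to disjoint pairs of representatives, use Lemma~\ref{lem.pathwithinisoclass} (with Proposition~\ref{prop.transverse} for transversality bookkeeping) to join the various lifts of each vertex of~$P$ by paths inside a single fiber of~$p$, and observe that $p|_{\tilde P}$ collapses each such fiber-tree to a point, hence is a homotopy equivalence. The only organizational difference is that you fix a distinguished base representative $S_a$ per vertex and route every edge-lift through it, whereas the paper just connects the edge-endpoint lifts directly into a tree; in either version one should note (as the paper does implicitly by calling the fiber a ``tree'') that the intermediate vertices produced by different applications of Lemma~\ref{lem.pathwithinisoclass} can be taken distinct, so that each $(p|_{\tilde P})^{-1}(v_a)$ really is contractible.
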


\begin{proof}
	Since $p$~is surjective on simplices, each edge of~$P$ may be lifted to an edge in~$\Sd(M,\phi)$ (but adjacent edges of~$P$ might not lift to adjacent edges of~$\Sd(M,\phi)$). Moreover, if we lift one edge at a time and always apply Proposition~\ref{prop.transverse} to the vertices in the lifts, we can guarantee that the union of the lifted edges is a transverse subcomplex of~$\Sd(M,\phi)$.
	
	Whenever two edges of~$P$ share a vertex that is lifted to two distinct vertices of~$\Sd(M,\phi)$, Lemma~\ref{lem.pathwithinisoclass} provides a path contained in~$p^{-1}(v)$ that joins them. Applying this lemma enough times (and always keeping everything transverse), we can construct, for each vertex~$v$ of~$P$, a tree in~$p^{-1}(v)$ connecting the various lifts of~$v$. Take $\tilde P$~to be the finite $1$-subcomplex of~$\Sd(M, \phi)$ comprised of the edge lifts and these trees (Figure~\ref{fig.polylift}). Since $p$~acts on~$\tilde P$ by collapsing each tree to a point, we conclude $p|_{\tilde P}$~is a homotopy equivalence.
\end{proof}

\begin{figure}[h]
	\centering
	\def \svgwidth{0.25\linewidth}
\begingroup%
  \makeatletter%
  \providecommand\color[2][]{%
    \errmessage{(Inkscape) Color is used for the text in Inkscape, but the package 'color.sty' is not loaded}%
    \renewcommand\color[2][]{}%
  }%
  \providecommand\transparent[1]{%
    \errmessage{(Inkscape) Transparency is used (non-zero) for the text in Inkscape, but the package 'transparent.sty' is not loaded}%
    \renewcommand\transparent[1]{}%
  }%
  \providecommand\rotatebox[2]{#2}%
  \newcommand*\fsize{\dimexpr\f@size pt\relax}%
  \newcommand*\lineheight[1]{\fontsize{\fsize}{#1\fsize}\selectfont}%
  \ifx\svgwidth\undefined%
    \setlength{\unitlength}{118.70998953bp}%
    \ifx\svgscale\undefined%
      \relax%
    \else%
      \setlength{\unitlength}{\unitlength * \real{\svgscale}}%
    \fi%
  \else%
    \setlength{\unitlength}{\svgwidth}%
  \fi%
  \global\let\svgwidth\undefined%
  \global\let\svgscale\undefined%
  \makeatother%
  \begin{picture}(1,1.23402435)%
    \lineheight{1}%
    \setlength\tabcolsep{0pt}%
    \put(0.41421882,0.3487035){\color[rgb]{0,0,0}\makebox(0,0)[lt]{\lineheight{1.25}\smash{\begin{tabular}[t]{l}$p$\end{tabular}}}}%
    \put(0,0){\includegraphics[width=\unitlength,page=1]{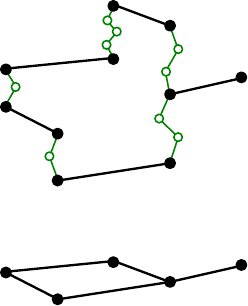}}%
    \put(0.46212396,0.44924064){\color[rgb]{0,0,0}\makebox(0,0)[lt]{\lineheight{1.25}\smash{\begin{tabular}[t]{l}\large \rotatebox{-90}{$\longrightarrow$}\end{tabular}}}}%
    \put(1.07447131,0.89036954){\color[rgb]{0,0,0}\makebox(0,0)[lt]{\lineheight{1.25}\smash{\begin{tabular}[t]{l}$\tilde P \subseteq \Sd(M,\phi)$\end{tabular}}}}%
    \put(1.07447131,0.12775234){\color[rgb]{0,0,0}\makebox(0,0)[lt]{\lineheight{1.25}\smash{\begin{tabular}[t]{l}$P \subseteq \S(M,\phi)$\end{tabular}}}}%
  \end{picture}%
\endgroup%

	\caption{Lifting a finite 1-subcomplex~$P$ in~$\S(M, \phi)$ to~$\Sd(M, \phi)$. We first lift edges using surjectivity of~$p$ (solid dots, thick edges), and then use Lemma~\ref{lem.pathwithinisoclass} to construct trees (hollow dots, thin edges) connecting distinct pre-images of all vertices.}
	\label{fig.polylift}
\end{figure}

\begin{proof}[Proof of Corollary~\ref{cor.2mfdcase}]
	We will show that the inclusion $P \into \S(M, \phi)$ of every finite $1$-subcomplex~$P$ is null-homotopic.
	By Lemma~\ref{lem.polylift}, there is a transverse $1$-subcomplex $\tilde P\subset \Sd(M, \phi)$ making the following diagram commute:
	\[\begin{tikzcd}
		\tilde P \arrow[r, hook] \arrow[d, "p|_{\tilde P}"] & \Sd(M, \phi) \arrow[d, "p"] \\
		P \arrow[r, hook]& \S(M, \phi)
	\end{tikzcd},\]
	and for which the restriction~$p|_{\tilde P}$ is a homotopy equivalence.
	
	Let $g\colon P \to \tilde P$~be a homotopy inverse (at the level of topological realizations), and consider its induced map on cones $\C g \colon \C P \to \C \tilde P$. By Proposition~\ref{prop.transimplyconn}, at the level of topological realizations of simplices, the inclusion $\tilde P\into \Sd(M, \phi)$ factors as 
	\[\tilde P \into \C \tilde P \overset{h}{\to} \Sd(M, \phi),\]
	for some continuous map~$h$. Hence, the following diagram (where all simplicial complexes are to be understood as their topological realizations) commutes up to homotopy:
	\[\begin{tikzcd}
		\tilde P  \arrow[r, hook] \arrow[rr, bend left, hook] & \C \tilde P \arrow[r, "h"] & \Sd(M, \phi) \arrow[d, "p"]\\
		P \arrow[r, hook] \arrow[u, "g"] \arrow[rr, bend right, hook] & \C P \arrow[u, "\C g"]&\S(M, \phi)
	\end{tikzcd}\]
	This shows that the inclusion $P \into \S(M, \phi)$ is homotopic to a map factoring through $P \into \C P$, and thus it is null-homotopic.
\end{proof}

We finish this section by briefly commenting on the only obstacle to extending Corollary~\ref{cor.2mfdcase} to manifolds~$M$ of higher dimension. It all boils down to proving Lemma~\ref{lem.pathwithinisoclass} in higher dimensions. In other words, one would need an affirmative answer to the following question:

\begin{quest}\label{quest.MillionDollarQuest}
	Let $S,T$ be a transverse pair of oriented properly embedded hypersurfaces in an oriented compact smooth manifold~$M$, and suppose $S, T$~are properly isotopic. Does there exist a sequence
	\[S=S_0, S_1, \ldots, S_k = T\]
	of oriented properly embedded hypersurfaces in~$M$, all in the same proper isotopy class, such that $S_{i-1} \cap S_{i} = \emptyset$ for each~$i\in \{1,\ldots,k\}$?
\end{quest}

Our proof in the $2$-dimensional case relies on the bigon criterion (Theorem~\ref{thm.bigoncrit}), whose straightforward generalization to dimensions greater than~$2$ is easily seen to be false. We have however not been able to use counterexamples to give a negative answer to Question~\ref{quest.MillionDollarQuest}. If such counterexamples do exist, we expect that answering this question would be a difficult task, since one would presumably need an invariant that distinguishes between codimension-$1$ submanifolds in the same isotopy class.

\section[Graphs in $2$-complexes]{Co-oriented regular graphs in $2$-complexes}\label{sec.turaev}

We have been dealing with hypersurfaces representing a fixed codimension-$1$ homology class, but through Poincaré Duality one may as well think of them as representing a $1$-dimensional cohomology class. Turaev has described a way of representing $1$-dimensional cohomology classes $\phi\in \h^1(X,\bd X)$ in certain $2$-dimensional CW-complexes~$X$ (relative to their ``boundary subspaces''~$\bd X$) by embedded graphs~$\Gamma$ satisfying some regularity conditions \cite{Tu02}. In this short section, we briefly comment on extensions of the previous results in this article to that context.

More concretely, Turaev treats finite CW-complexes~$X$ of dimension~$2$ that are locally homeomorphic to a cone over a graph, 
and the embedded graphs~$\Gamma$ are required to have closed tubular neighborhoods~$U \iso \Gamma \times [-1,1]$ disjoint from~$\bd X$ (see Sections 1.1~and~1.2 of his paper for details). Such an embedded graph~$\Gamma$ is called \textbf{regular}. A regular graph~$\Gamma$, together with the choice of a component of~$U \setminus \Gamma$ near each component of~$\Gamma$ (called a \textbf{co-orientation}), determines a continuous map $(X, \bd X) \to (\SS^1, \{-1\})$, and thus an element in~$\h^1(X, \bd X)$ (the construction of this map is detailed in Turaev's Section~1.2). Conversely, Turaev shows that all cohomology classes can be obtained in this fashion.

To begin, we ponder whether any two co-oriented regular graphs for a fixed element $\phi \in \h^1(X, \bd X)$ can be connected through sequentially disjoint graphs -- in other words, if the simplicial complex~$\Sd(X, \phi)$ (whose definition is the straightforward adaptation of Definition~\ref{dfn.Sdagger}) is connected. And indeed, all main steps in the proof of Proposition~\ref{prop.distbound} can be translated to this setting, provided one uses the appropriate notion of transversality -- see the proof of Lemma~1.4 in Turaev's article. The co-orientations play the role of the framings of hypersurfaces, and 
in fact, an analogue of the oriented sum construction is briefly described therein. The fact that any two vertices~$\Gamma_0, \Gamma_1$ of~$\Sd(X, \phi)$ are connected by a path then follows as in Theorem~\ref{thm.connected} by taking a disjoint parallel copy of $\Gamma_1$ and perturbing it to be transverse to~$\Gamma_0$ (Turaev uses a similar argument in proving his Lemma~1.4).

Regular co-oriented graphs are then used to define a norm on $\h^1(X, \bd X; \RR)$ in much the same way one uses surfaces to define the Thurston norm on~$\h_2(M, \bd M; \RR)$, for $M$~an irreducible and boundary-irreducible oriented compact smooth $3$-manifold. Explicitly, for a regular graph~$\Gamma \subset X$, he writes $\chi_-(\Gamma) := -\chi(\Gamma)$, defines the norm~$\Vert \phi\Vert_X$ of a class~$\phi\in \h^1(X, \bd X)$ to be the minimal~$\chi_-(\Gamma)$ over regular graphs~$\Gamma$ representing~$\phi$, and then extends~$\Vert \cdot \Vert_X$ to~$\h^1(X, \bd X;\RR)$. The regularity assumption on~$\Gamma$ implies that all its components have non-positive Euler-characteristic, making the definition of~$\chi_-$ slightly less cumbersome than in the $3$-manifold case. We call $\Vert \cdot \Vert_X$ the \textbf{Turaev norm}.

As in Section~\ref{sec.thurston}, we can again consider the subcomplex of~$\Sd(X, \phi)$ spanned by regular graphs that realize the Turaev norm of~$\phi$, and for which no union of components represents the zero class. A happy coincidence dictates we also denote this complex by~$\Td(X, \phi)$. When adapted to this setting, the proof of Proposition~\ref{prop.3mfddistbound} not only carries over, but actually becomes simpler: we no longer need to worry about ensuring that the oriented surgery produces no components in~$\Sigma_0$ of positive Euler characteristic, since all regular graphs satisfy $\chi \le 0$ \cite[p.~139]{Tu02}. Concretely, this makes the step where we used Observation~2 become immediate. Moreover, it precludes the need for an analogue of Lemma~\ref{lem.inessential} in adapting the proof of Theorem~\ref{thm.3mfdcase} to show that $\Td(X,\phi)$~is connected.

As our proof of Theorem~\ref{thm.simplyconn} requires no additional topological input, only a more methodical approach to counting intersections between oriented surgeries, Theorem~\ref{thm.simplyconn} can be adapted to show $\Sd(X, \phi)$~is simply connected. In fact, as we are in the $2$-dimensional setting, transversality rules out triple points, obviating the need for an analogue of Proposition~\ref{prop.2cpx}. 

Regarding the results in Section~\ref{sec.dropdagger}, analogues of Corollaries \ref{cor.nodagger}~and~\ref{cor.3mfdnodagger} should also hold for the simplicial complexes $\S(X,\phi)$ and $\T(X,\phi)$, respectively, where regular graphs are taken up to isotopy, with essentially the same arguments. We are however aware of no replacement for the bigon criterion for $2$-complexes that would yield an analogue of Lemma~\ref{lem.polylift}. This seems to be the only obstacle in adapting the proof of Theorem~\ref{cor.2mfdcase} to show that $\S(X, \phi)$~is simply connected.

\section{Tube-equivalence of Seifert surfaces}\label{sec.seifert}

In this section we use the oriented surgery construction to give an alternative proof of the fact that every two Seifert surfaces for a knot~$K$ in a smooth $3$-manifold are tube-equivalent. This theorem is well-established and there are many proofs of various flavors. A Morse-theoretical proof sketch can be found in the lecture notes of Gordon \cite[page 27]{Go78}, a proof using triangulations is in the book of Lickorish \cite[Theorem~8.2]{Li97}, and Scharlemann and Thompson \cite[Theorem 1]{ST88} prove the statement in~$\SS^3$ using minimal surface theory. We start by recalling the basics about tube-equivalences and Seifert surfaces. 

We work in the following setting. Let $M$ be a \textbf{rational homology $3$-sphere}, that is, $M$~is a smooth $3$-manifold for which $\h_{*}(M;\QQ) \iso \h_{*}(\SS^3;\QQ)$ (in particular, $M$~is closed). Moreover, let $K$ be an oriented knot in~$M$, let $\nu(K)$~be an open tubular neighborhood of~$K$, and denote by $E_K:= M\setminus\nu(K)$ the knot exterior.
\begin{dfn}
	A \textbf{Seifert surface} for $K$ is a properly embedded oriented connected surface~$S$ in~$E_K$ such that $\partial S$~is a longitude of~$K$ (with the orientation induced from~$K$).
\end{dfn}

Suppose $S$ is a Seifert surface for~$K$, and let $\alpha \subset E_K$~be an oriented embedded arc such that
\begin{itemize}
\item $\alpha \cap S = \bd \alpha$,
\item $\alpha$ intersects~$S$ transversely,
\item $\alpha$ travels from one side of~$S$ to the same side of~$S$, that is, at one of the endpoints of~$\alpha$ the velocity vector of~$\alpha$ agrees with the framing of~$S$, and at the other endpoint it disagrees.
\end{itemize}
One can modify $S$ by replacing a small neighborhood of~$\bd \alpha$ with the circle bundle of the normal bundle of~$\alpha$, thus producing a new Seifert surface~$S'$ for~$K$. We then say that $S'$~is obtained from~$S$ by \textbf{adding a tube}, and that $S$ is obtained from~$S'$ by \textbf{removing a tube}.

\begin{figure}[h]
	\centering
	\def \svgwidth{0.65\linewidth}
	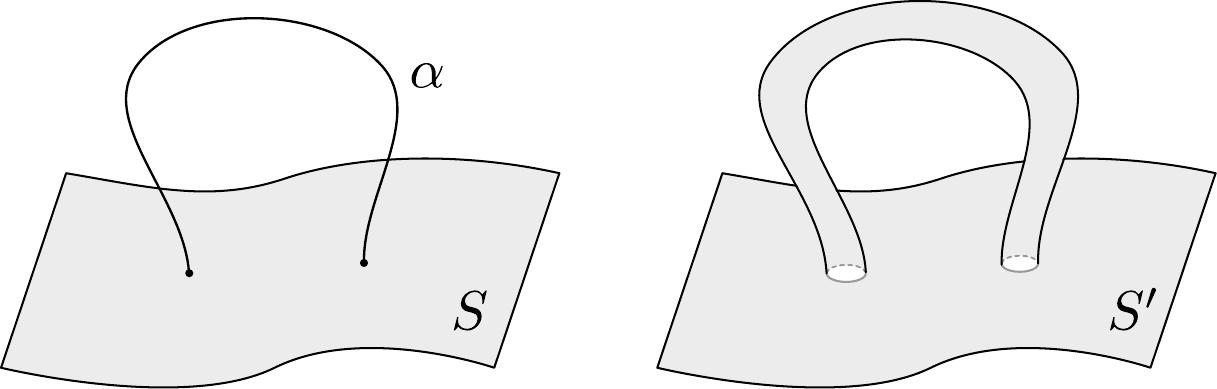
	\caption{The tubing construction. Here, $S'$ is obtained from~$S$ by adding a tube and $S$~is obtained from~$S'$ by removing a tube.}
	\label{fig.tube}
\end{figure}

The following notion of equivalence for Seifert surfaces often plays an important role in the construction of knot invariants.

\begin{dfn}
Two Seifert surfaces for~$K$ are \textbf{tube-equivalent} if one can be obtained from the other by a sequence of addition or tubes, removal of tubes, and proper isotopies.
\end{dfn}

\begin{thm}[Tube-equivalence of Seifert surfaces]\label{thm.tubeq}
	Any two Seifert surfaces for~$K$ are tube-equivalent.
\end{thm}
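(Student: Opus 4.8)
The plan is to derive Theorem~\ref{thm.tubeq} from Theorem~\ref{thm.connected}, applied to the knot exterior~$E_K$, by converting the ``disjointness'' produced there into ``tube-equivalence'' by means of a cobordism argument; the oriented surgery construction enters only through its use in the proof of Theorem~\ref{thm.connected}.

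First I would record the homological setup. Since $M$~is a rational homology sphere, the meridian generates~$\h_1(E_K;\QQ)\iso\QQ$, so by the ``half-lives-half-dies'' lemma there is a unique slope on~$\bd E_K$ that is rationally null-homologous in~$E_K$; if any Seifert surface exists, this slope is an integral longitude and is forced to be the boundary slope of every Seifert surface (if no Seifert surface exists, the theorem is vacuous). Moreover $\h_2(E_K;\QQ)=0$, so $\h_2(E_K)$ is torsion and the map $\h_2(E_K,\bd E_K)\to\h_1(\bd E_K)$ is injective; hence all Seifert surfaces for~$K$ have isotopic boundaries and represent one and the same class~$\phi\in\h_2(E_K,\bd E_K)$. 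Applying Theorem~\ref{thm.connected} to~$(E_K,\phi)$, and using the supplementary properties of the resulting path recorded after Theorem~\ref{thm:maintheorem} (with $\bd S,\bd T$ chosen, after a proper isotopy, as disjoint parallel longitudes), I obtain a sequence $S=\Sigma_0,\Sigma_1,\dots,\Sigma_m=T$ of properly embedded surfaces representing~$\phi$, consecutive ones disjoint, and with every boundary component of every~$\Sigma_i$ a longitude.

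It then remains to prove two claims. \textbf{(A)} Any properly embedded surface~$\Sigma\sub E_K$ representing~$\phi$ with all boundary components longitudes can be transformed, inside an arbitrarily small regular neighborhood of~$\Sigma$, into a genuine Seifert surface by finitely many moves of the following kinds: absorb a closed component into another component via a tube (as in Figure~\ref{fig.tube}, allowing the arc to join two distinct components), join two components with boundary by such a tube, and merge two oppositely-oriented parallel boundary longitudes by attaching a half-tube along the annulus they cobound in~$\bd E_K$. Performing this for each~$\Sigma_i$ inside a neighborhood disjoint from those of~$\Sigma_{i\pm1}$ (and taking the Seifert surfaces associated to~$\Sigma_0,\Sigma_m$ to be~$S,T$ themselves), consecutive associated surfaces remain disjoint, so Theorem~\ref{thm.tubeq} follows from claim~\textbf{(B)}: two \emph{disjoint} Seifert surfaces for~$K$ are tube-equivalent.

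For~\textbf{(B)}, since $S_0,S_1$~are disjoint, homologous, and separating (their union is null-homologous mod~$2$), they cobound a region~$W\sub E_K$ with $\bd W = S_0\cup(-S_1)\cup(\bd E_K\cap W)$; that is, $W$~is a cobordism from~$S_0$ to~$S_1$ rel the boundary annulus. A Morse function $W\to[0,1]$ with ends~$S_0,S_1$, product-like near that annulus, then presents~$S_1$ as the outcome of a sequence of elementary surgeries on the level surface: index-$1$ critical points add tubes, index-$2$ points compress, and index-$0$ and~$3$ points create and destroy spheres. I expect the main obstacle to be the ensuing handle-trading — one must rearrange and cancel critical points so that every intermediate level surface is a \emph{connected} Seifert surface with the correct boundary and every surgery is an addition or removal of a tube: cancelling index-$0$/index-$3$ pairs against one another, absorbing newborn spheres into later $1$-handles, and trading each separating $2$-handle against the later $1$-handle that re-joins its two outputs. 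Each individual step is elementary, but making this bookkeeping rigorous (so that no intermediate surface disconnects, changes boundary slope, or retains a stray closed component) is the technical heart of the argument, and it is precisely here that starting from \emph{disjoint} surfaces — the output of Theorem~\ref{thm.connected} — is essential, since the cobordism~$W$ is then comparatively rigid.
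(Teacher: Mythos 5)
Your strategy matches the paper's: reduce to the disjoint case via Theorem~\ref{thm.connected} applied to~$E_K$, then handle disjoint Seifert surfaces by a cobordism/Morse-theoretic argument. So this is not a genuinely different route, but there are two points worth flagging.

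Your Claim~(A) is over-engineered. The paper observes that, because each intermediate surface $S_i$ arises as a piece of an oriented surgery whose boundary is built from coherently-oriented copies of the longitude $\bd S$, \emph{every} component of $\bd S_i$ is orientation-preserving isotopic to the longitude. Combined with $[\bd S_i]=[\bd S]$ in $\h_1(\bd E_K)$, this forces $\bd S_i$ to be a \emph{single} longitude. Since $\h_2(E_K)=0$ when $M$ is a rational homology sphere, closed components are null-homologous, so one can simply discard them; the result is automatically connected with longitude boundary, i.e.\ a Seifert surface, and discarding components cannot ruin disjointness from the neighbours. Thus the moves you introduce for Claim~(A) — in particular the scenario with oppositely-oriented parallel boundary longitudes, and the ``half-tube along an annulus in $\bd E_K$'' (which is not a tube move as defined in the paper, though you do not actually need it to be) — address a situation that never arises from the construction.

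Your Claim~(B) is the paper's argument in substance: view the region $W$ between the two disjoint Seifert surfaces as a sutured cobordism, produce a Morse function product-like near the boundary annulus, and read tube additions and removals off the critical points. The paper gets there by arranging a Morse function with no index-$0$ or index-$3$ critical points (citing Juhász \cite[Proof of Theorem~2.13]{Ju06} for the handle decomposition of a sutured manifold), whereas you describe the handle-trading only informally and correctly flag it as the technically delicate part. That bookkeeping is precisely what the paper outsources; your sketch would need to be tightened along those lines to be complete, but the idea is the same.
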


\begin{proof} 
	We will apply the oriented surgery construction used in the proof of Proposition~\ref{prop.distbound} to show that for every two Seifert surfaces~$S, T$ for~$K$, there is a sequence of Seifert surfaces for~$K$ from~$S$ to~$T$ with each two consecutive ones being disjoint. Then we give a Morse-theoretical argument that establishes tube equivalence of \emph{disjoint} Seifert surfaces.
	
	Using the fact that $M$~is a rational homology $3$-sphere, standard arguments show that $\h_2(E_K, \bd E_K) \iso \ZZ$ (we emphasize that $\ZZ$-coefficients are to be understood). By definition of a Seifert surface, the composition
	\[\h_2(E_K, \bd E_K)  \to \h_1(\bd E_K) \to \h_1(K) \]
	of the connecting homomorphism and the map induced by the projection $\bd E_K \to K$ takes the class represented by each Seifert surface to the standard generator of~$\h_1 (K)$. Hence this composition is an isomorphism and all Seifert surfaces for~$K$ are homologous. It follows that $[\bd S] = [\bd T]$ in~$\h_1(\bd E_K)$, and since homologous curves on a torus are necessarily isotopic, we can, after an isotopy near~$\bd E_K$, assume that $\bd S \cap \bd T = \emptyset$.
	
	We now apply the oriented surgery construction to obtain a sequence of homologous surfaces $S=S_0,\ldots, S_n = T$ in~$E_K$, with $S_i$~disjoint from~$S_{i+1}$ for each $0\le i <n$. It is clear from the construction of the~$S_i$ as pieces of oriented surgeries, and from the fact that $\bd S \cap \bd T = \emptyset$, that each $\bd S_i$ consists of a single isotopic copy of~$\bd S$. Removing all closed components from each~$S_i$ makes it connected, and hence a Seifert surface for~$K$.
	
	To finish the proof, we give an argument for the case $S\cap T = \emptyset$. Since $S$ is connected and non-trivial in homology, we see that $E_K\cutalong S$ is connected, and since $T$ is connected and homologous to~$S$, we obtain that $E_K\cutalong (S\cup T)$~has precisely two connected components. We can think of one such component~$W$ as an oriented ``cobordism with corners'' between $S$~and~$T$ (the precise notion is that of a sutured manifold), and produce a Morse function on~$W$ without extrema, to obtain a handle decomposition with only $1$- and $2$-handles. For a more detailed description of such a construction, see for example the paper by Juhász~\cite[Proof of Theorem~2.13]{Ju06}. These handles correspond to addition and removal of tubes, providing a tube equivalence between $S$~and~$T$.
\end{proof}

\section{Secondary $\ell^2$-torsion}\label{sec:l2torsion}

In this section we give an application of Theorem~\ref{thm.3mfdcase} in defining an invariant of $2$-dimensional homology classes in irreducible and boundary-irreducible compact oriented connected $3$-manifolds with empty or toroidal boundary.

We start by recalling the basics of $\ell^2$-invariants; for proofs we refer to L\"uck's monograph \cite{Lue02}. Let $G$ be a countable group and let $\CC [G]$ be its group algebra over~$\CC$. One defines a scalar product~$(\cdot, \cdot)$  on $\CC [G]$ by setting, for~$g,h\in G$,
\[(g,h) :=\begin{cases}1 & \text{if $g=h$}\\0 &\text{if $g \neq h$} \end{cases},\]
and then extending sesquilinearly. This turns $\CC[G]$ into a pre-Hilbert space, whose completion we denote by~$\ell^2(G)$. The elements of this completion can be described as the (possibly infinite) $\CC$-linear combinations of elements of~$G$ whose coefficients are square-summable in absolute value.

Given a topological space~$W$ with a $G$-action, we consider the chain complex $\C_*^{(2)}(G\curvearrowright W):=\ell^2(G) \otimes_{\ZZ[G]} \C_* (W)$, where~$\C_*(W)$ is the singular chain complex of~$W$ with the standard left $G$-action, and $G$~acts on the right of~$\ell^2(G)$ by right multiplication. More generally, if $V \sub W$~is a $G$-invariant subspace, one can use the singular chain complex~$\C_*(W,V)$ of the pair, and write $\C_*^{(2)}(G\curvearrowright (W, V)):=\ell^2(G) \otimes_{\ZZ[G]} \C_* (W,V)$. The \textbf{$\ell^2$-homology groups} are then defined as usual, except that one mods out \emph{the closures} of the images of the differentials (these are sometimes referred to as \emph{reduced} $\ell^2$-homology groups):
\[\h_i^{(2)}(G\curvearrowright (W, V)):= \ker \partial_i^{(2)} / \overline {\operatorname{im}\partial^{(2)}_{i+1}}.\]

Suppose now that $Z \sub Y \sub X$~is a triple of spaces, such that both $X$~and~$Y$ are path-connected, with universal covering maps $p_X\colon  \tilde X \to X$ and $p_Y \colon \tilde Y \to Y$. We will compare the chain complexes
\begin{align*}
\C ^{(2)}_*(Y,Z) &:= \C_*^{(2)}( \pi_1(Y) \curvearrowright (\tilde Y, p_Y^{-1} (Z))),\\
\C ^{(2)}_*(Y \sub X,Z) &:= \C_*^{(2)}( \pi_1(X) \curvearrowright (p_X^{-1}(Y), p_X^{-1} (Z))),
\end{align*}
whose homologies (always modding out closures of images) we denote by $\h^{(2)}_*(Y,Z)$ and $\h ^{(2)}_*(Y \sub X,Z)$, respectively (notice that only the second one depends on~$X$).

Assume now that $X$~is a finite CW-complex and $Z, Y$~are subcomplexes. If all $\ell^2$-homology groups~$\h^{(2)}_*(Y,Z)$ vanish and $\pi_1(Y)$~satisfies an additional technical condition (for example being residually finite), then one can define a secondary invariant~$\tautwo(Y, Z) \in \RR$, called \textbf{$\ell^{2}$-torsion}  \cite[Definition 3.91]{Lue02}. Similarly, if the groups~$\h ^{(2)}_*(Y \sub X,Z)$ all vanish and $\pi_1(X)$ is residually finite, we can define~$\tautwo(Y \sub X,Z)\in \RR$. In general, a chain complex with well-defined $\ell^2$-torsion is said to be \textbf{$\ell^2$-acyclic}.
We will not need the precise definition of $\ell^2$-torsion, but only the following two properties, which are referred to as induction principle \cite[Theorem 3.35(8)]{Lue02} and multiplicativity of $\ell^2$-torsion \cite[Theorem 3.35(1)]{Lue02}.

\begin{thm}[Properties of $\ell^2$-torsion]\label{thm:l2IndandMult}
Let $X$ be a finite connected CW-complex and $Z\sub Y\sub X$ subcomplexes such that for every basepoint $y\in Y$, the induced map $\pi_1(Y,y)\to\pi_1(X,y)$ is injective. Then $\C_*^{(2)}(Y \sub X,Z)$ is $\ell^2$-acyclic if and only if $\C_*^{(2)}(Y ,Z)$ is $\ell^2$-acyclic, and in that case one has
\[ \tautwo(Y\sub X,Z) =\tautwo(Y,Z). \]
Furthermore, if $\C_*^{(2)}(Y,Z)$ and $\C_*^{(2)}(X,Z)$ are $\ell^{2}$-acyclic,then
\[ \tautwo(X,Z)= \tautwo(X,Y)\cdot \tautwo(Y,Z). \]
\end{thm}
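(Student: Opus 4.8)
The plan is to obtain both assertions by translating them into statements about finite Hilbert $\mathcal{N}(G)$-chain complexes, where they are precisely the induction principle \cite[Theorem~3.35(8)]{Lu02} and the multiplicativity in short exact sequences \cite[Theorem~3.35(1)]{Lu02} recalled above; the only substantive work is to recognize our chain complexes as being assembled by the standard algebraic constructions.

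For the first assertion, I would identify $\C_*^{(2)}(Y \subseteq X, Z)$ as induced from $\C_*^{(2)}(Y, Z)$ along the inclusion $\pi_1(Y) \hookrightarrow \pi_1(X)$. The covering $p_X^{-1}(Y) \to Y$ is the pullback of the universal cover $\tilde X \to X$, so a loop in $Y$ lifts to a loop in $p_X^{-1}(Y)$ exactly when it becomes trivial in $\pi_1(X)$; under the injectivity hypothesis each connected component of $p_X^{-1}(Y)$ is therefore a copy of the universal cover $\tilde Y$, and $\pi_1(X)$ acts on the set of components transitively with stabilizer the image of $\pi_1(Y)$. This yields an isomorphism of $\ZZ[\pi_1(X)]$-chain complexes
\[\C_*(p_X^{-1}(Y), p_X^{-1}(Z)) \cong \ZZ[\pi_1(X)] \otimes_{\ZZ[\pi_1(Y)]} \C_*(\tilde Y, p_Y^{-1}(Z)),\]
and tensoring over $\ZZ[\pi_1(X)]$ with $\ell^2(\pi_1(X))$ exhibits $\C_*^{(2)}(Y \subseteq X, Z)$ as the induction of $\C_*^{(2)}(Y, Z)$. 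Then \cite[Theorem~3.35(8)]{Lu02} applies verbatim, giving the equivalence of $\ell^2$-acyclicity together with $\tautwo(Y \subseteq X, Z) = \tautwo(Y, Z)$ (and residual finiteness of $\pi_1(X)$, whenever it is invoked to make $\tautwo$ meaningful, passes to the subgroup $\pi_1(Y)$, so the right-hand side is defined whenever the left-hand side is).

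For multiplicativity, I would start from the short exact sequence of free $\ZZ[\pi_1(X)]$-chain complexes attached to the triple $p_X^{-1}(Z) \subseteq p_X^{-1}(Y) \subseteq \tilde X$,
\[0 \to \C_*(p_X^{-1}(Y), p_X^{-1}(Z)) \to \C_*(\tilde X, p_X^{-1}(Z)) \to \C_*(\tilde X, p_X^{-1}(Y)) \to 0,\]
which splits in each degree because the quotient complex is free over $\ZZ[\pi_1(X)]$. Applying $\ell^2(\pi_1(X)) \otimes_{\ZZ[\pi_1(X)]} -$ then produces a short exact sequence of finite Hilbert $\mathcal{N}(\pi_1(X))$-chain complexes
\[0 \to \C_*^{(2)}(Y \subseteq X, Z) \to \C_*^{(2)}(X, Z) \to \C_*^{(2)}(X, Y) \to 0.\]
By hypothesis $\C_*^{(2)}(X, Z)$ is $\ell^2$-acyclic, and the first part of the theorem (applied to $\C_*^{(2)}(Y, Z)$, which is $\ell^2$-acyclic by assumption) shows that $\C_*^{(2)}(Y \subseteq X, Z)$ is $\ell^2$-acyclic; hence so is $\C_*^{(2)}(X, Y)$, and \cite[Theorem~3.35(1)]{Lu02} gives $\tautwo(X, Z) = \tautwo(X, Y) \cdot \tautwo(Y \subseteq X, Z)$. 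Substituting $\tautwo(Y \subseteq X, Z) = \tautwo(Y, Z)$ from the first part finishes the proof.

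I expect the main (and essentially only non-formal) obstacle to be the bookkeeping behind the displayed isomorphism of $\ZZ[\pi_1(X)]$-chain complexes — in particular, making precise that injectivity of $\pi_1(Y) \to \pi_1(X)$ is exactly what forces the components of $p_X^{-1}(Y)$ to be copies of $\tilde Y$ rather than larger covers of $Y$ — after which both halves are immediate from the cited results.
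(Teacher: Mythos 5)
Your proof is correct, and it follows the same route the paper intends: the paper itself offers no proof for this theorem, simply pointing to Lück's Theorems~3.35(8) and 3.35(1) as the "induction principle" and "multiplicativity", and your argument supplies precisely the reduction from the topological statement about triples of CW-complexes to those algebraic statements about Hilbert $\mathcal{N}(G)$-chain complexes that the paper's citations leave implicit. In particular, the covering-space bookkeeping you carry out (that $\pi_1$-injectivity of $Y\hookrightarrow X$ forces each component of $p_X^{-1}(Y)$ to be a copy of $\tilde Y$, with $\pi_1(X)$ acting transitively on components with stabilizer $\pi_1(Y)$, hence $\C_*(p_X^{-1}(Y),p_X^{-1}(Z))\cong\ZZ[\pi_1(X)]\otimes_{\ZZ[\pi_1(Y)]}\C_*(\tilde Y, p_Y^{-1}(Z))$) is exactly the content being invoked when the paper calls this "the induction principle", and the degreewise-split short exact sequence of the triple is exactly what puts one in the setting of Lück's multiplicativity theorem.
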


Now we return to $3$-manifolds. Let $M$ be an irreducible and boundary-irreducible compact oriented connected smooth $3$-manifold, and let $S$~be a Thurston norm-realizing surface in~$M$. We will cut~$M$ along~$S$ and denote the submanifolds of~$\bd(M\cutalong S)$ that come from~$S$ by $S_+$~and~$S_-$ depending, respectively, on whether they come from the positive or negative side of~$S$.
The first author showed that the $\ell^2$-torsion of the pair~$(M\cutalong S, S_-)$ is defined \cite[Remark 1.4]{He18}; we refer to the dissertation \cite[Chapter 4]{He19} for more information about $\ell^2$-torsion in this specific context.

\begin{thm}[Invariance of $\ell^2$-torsion for disjoint surfaces]\label{thm:independent}
	Let $M$ be an irreducible oriented compact connected smooth $3$-manifold with empty or toroidal boundary, and let $S, T$ be homologous Thurston norm-realizing surfaces that are disjoint. Then one has
	\[\tautwo(M\cutalong S, S_-)=\tautwo(M\cutalong T, T_-). \]
\end{thm}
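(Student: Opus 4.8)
The plan is to cut $M$ along $S \sqcup T$, observe that the two cut-open manifolds $M\cutalong S$ and $M\cutalong T$ are assembled from the \emph{same} two pieces (only glued along different surfaces), and then compute both $\ell^2$-torsions in terms of those pieces using the induction and multiplicativity principles of Theorem~\ref{thm:l2IndandMult}.

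First I would set up the decomposition. Since $S$ and $T$ are disjoint and homologous, any loop meeting $S$ transversely once must also meet $T$, so $S$ and $T$ separate the components of $M\cutalong(S\sqcup T)$ into two groups; collecting these (possibly after grouping, and keeping track of co-orientations) exhibits $M$ as $M = A\cup B$ with $A\cap B = S\sqcup T$, in such a way that regluing $A$ to $B$ along only the copy of $T$ recovers $M\cutalong S$, while regluing along only the copy of $S$ recovers $M\cutalong T$. The distinguished boundary surfaces can be arranged so that $S_-\subset\bd B$ and $T_-\subset\bd A$, while the copy of $T$ in $\bd A$ becomes $T_-$ inside $M\cutalong T$ and the copy of $S$ in $\bd B$ becomes $S_-$ inside $M\cutalong S$. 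This is transparent in a product collar model; in general it is a matter of orientation bookkeeping.

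Next I would record two inputs. First, $\pi_1$-injectivity: by Lemma~\ref{lem.incompressible} the Thurston norm-realizing surfaces $S$ and $T$ are incompressible, and since $M$ is irreducible, cutting it along incompressible surfaces yields pieces whose fundamental groups inject into that of the ambient manifold; hence the induction principle of Theorem~\ref{thm:l2IndandMult} lets us compute the $\ell^2$-torsion of each piece with respect to its own fundamental group. Second, $\ell^2$-acyclicity: all pairs appearing below, namely $(M\cutalong S, S_-)$, $(M\cutalong T, T_-)$, $(A, T_-)$ and $(B, S_-)$, are $\ell^2$-acyclic. The first two hold by \cite{He18}; the other two hold because $A$ and $B$, equipped with the horizontal boundary coming from the copies of $S$ and $T$ they contain, are \emph{taut} sutured manifolds (a surface of smaller complexity in the horizontal homology class, pushed into $M$, would contradict norm-minimality of $S$ or $T$), so the argument of \cite{He18} applies to them verbatim, up to a use of Poincaré--Lefschetz duality for $\ell^2$-homology to switch between the two horizontal boundary pieces when the sign conventions demand it.

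Finally the computation. After triangulating so that $A$, $B$, $S_-$, $T_-$ become subcomplexes, multiplicativity applied to the triple $S_-\subset B\subset M\cutalong S$ gives $\tautwo(M\cutalong S, S_-) = \tautwo(M\cutalong S, B)\cdot\tautwo(B, S_-)$; excision of the relative chain complexes on the universal cover identifies $\tautwo(M\cutalong S, B)$ with the torsion of $(A,\text{copy of }T)$, which by the induction principle equals $\tautwo(A, T_-)$. Hence $\tautwo(M\cutalong S, S_-) = \tautwo(A, T_-)\cdot\tautwo(B, S_-)$. Running the identical argument with the roles of $S$ and $T$ interchanged --- the triple $T_-\subset A\subset M\cutalong T$, where now $A$ and $B$ are glued along the copy of $S$ --- yields $\tautwo(M\cutalong T, T_-) = \tautwo(B, S_-)\cdot\tautwo(A, T_-)$, and the two products coincide, proving the theorem. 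The main obstacle is precisely the $\ell^2$-acyclicity of the pieces $(A, T_-)$ and $(B, S_-)$: this is the point where Thurston norm-minimality is genuinely used (it is what makes $A$ and $B$ taut) and where one must lean on the machinery of \cite{He18}, \cite{He19}; the separation-and-orientation bookkeeping of the first step is the secondary, purely topological, nuisance.
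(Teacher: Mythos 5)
Your proposal is correct and follows essentially the same route as the paper's proof: decompose $M$ along $S\sqcup T$ into two pieces (your $A,B$ are the paper's $M_1,M_0$), establish $\pi_1$-injectivity of the pieces via incompressibility, invoke tautness of the pieces as sutured manifolds to get $\ell^2$-acyclicity, and then express both $\tautwo(M\cutalong S,S_-)$ and $\tautwo(M\cutalong T,T_-)$ as the same product $\tautwo(A,T_-)\cdot\tautwo(B,S_-)$ using multiplicativity, excision, and the induction principle. The only (minor) difference is that the paper explicitly restricts the sketch to the case of connected $S,T$ and defers the reduction of the general case to \cite[Lemma~4.10]{He19}, whereas you attempt to absorb the disconnected case into the separation/grouping step at the start; the bipartite-graph argument from the proof of Theorem~\ref{thm.connected} is what justifies that grouping, so if you keep the disconnected case you should cite it.
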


We will sketch a proof of this theorem only in the case where $S$~and~$T$ are both connected. An inductive argument reduces the general case to this setting \cite[Lemma~4.10]{He19}.

\begin{proof}[Proof sketch for $S,T$ connected]
	If $S$~and~$T$ are null-homologous, then they are empty and there is nothing to show. Otherwise, we see as in the proof of Theorem~\ref{thm.tubeq} that $M$~is the disjoint union of two $3$-manifolds~$M_0, M_1$, with $\bd M_i = S \cup T \cup  (M_i \cap \bd M)$ for $i \in \{0,1\}$ (see Figure~\ref{fig.cutalongS}).
	
	\begin{figure}[h]
		
		\begin{tikzpicture}
		\begin{scope}[xshift=-3cm]
		\draw (0,0) circle (0.75) ;
		\draw (0,0) circle (1.5);
		
		\draw[dashed] (-1.5,0) -- (-0.75,0);
		\draw[dotted,thick] (1.5,0) -- (0.75,0);
		
		\node (N1) at (0,1.1) {$M_0$};
		\node (N2) at (0,-1.1) {$M_1$};
		
		\node (T) at (1.7,0) {$T$};
		\node (S) at (-1.7,0) {$S$};
		\end{scope}
		\begin{scope}[yshift=-0.35cm]
		
		\draw[dashed] (0,0) -- (0,0.75);
		\draw[dashed] (3,0) -- (3, 0.75);
		\draw[dotted,thick] (1.5,0) -- (1.5,0.75);
		\draw[-] (0,0) -- (3,0);
		\draw[-] (0,0.75) -- (3, 0.75);
		
		\node (NcutS) at (1.5,1.1) {$M\cutalong S$};
		\node (Sm) at (0,-0.3) {$\ S_-$};
		\node (T) at (1.5,-0.3) {$T$};
		\node (Sp) at (3,-0.3) {$S_+$};
		\node (N1) at (0.75,0.35) {$M_0$};
		\node (N2) at (2.25,0.35) {$M_1$};

		\end{scope}
		\end{tikzpicture}
		\caption{Schematic picture for the proof of Theorem~\ref{thm:independent}. On the left one sees the original manifold~$M$. On the right one sees the result after cutting~$M$ along~$S$.}
		\label{fig.cutalongS}
	\end{figure}
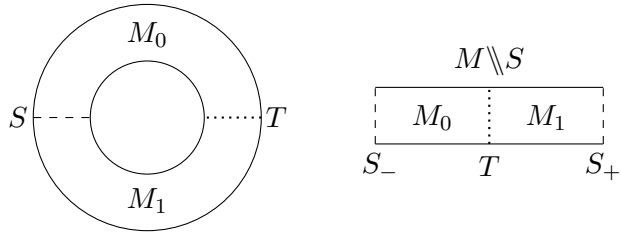
	
	Since Thurston norm-realizing surfaces are incompressible and $2$-sided, it follows from an application of the Loop Theorem \cite[Theorem~1.3.1]{AFW15} to~$M \cutalong T$ that the inclusion $T \into M$ is $\pi_1$-injective. Expressing $\pi_1(M)$ as an HNN-extension of $\pi_1(M \cutalong S)$ then makes it clear that the inclusion $T \into M\cutalong S$ is $\pi_1$-injective, and hence so are also the inclusions of~$T$ into the $M_i$. It now follows from basic facts about amalgams of groups \cite[Chapter~I, Section~1.2]{Se77} that the inclusions of the~$M_i$ into~$M\cutalong S$ are both $\pi_1$-injective, and thus the triples~$S_- \sub M_0 \sub M\cutalong S$ and $ T \sub M_1 \sub M\cutalong S$ satisfy the hypothesis of Theorem~\ref{thm:l2IndandMult}.
	
	Since the complexes $\C_*^{(2)}(M_0, S)$, $\C_*^{(2)}(M_1, T)$ and $\C_*^{(2)}(M\cutalong S, S_-)$ are $\ell^2$-acyclic (we again refer to the dissertation, where this is stated  in the language of taut sutured manifolds \cite[Corollary~3.6]{He19}), Theorem~\ref{thm:l2IndandMult} yields:
	
	\begin{align*}
		\tautwo(M\cutalong S,S_-) & =  \tautwo(M\cutalong S, M_0 ) \cdot \tautwo(M_0, S) & \text{(multiplicativity of $\ell^2$-torsion)}\\
		&= \tautwo(M_1 \sub M\cutalong S, T) \cdot \tautwo(M_0, S) & \text{(we justify this step below)}\\
		&= \tautwo(M_1, T) \cdot \tautwo(M_0, S) &\text{(induction principle)}.
	\end{align*}
	The second equality follows from the fact that the relevant chain complexes are homotopy-equivalent to isomorphic cellular chain complexes, and this preserves $\ell^2$-torsion \cite[Theorem~1.14]{Schi01}.
	
	On the other hand, cutting along~$T$ instead of~$S$ and repeating the argument yields
	\[ \tautwo(M\cutalong T,T_-) = \tautwo(M_1, T) \cdot \tautwo(M_0, S),\]
	from which the result follows.
\end{proof}

As an application of Theorem~\ref{thm.3mfdcase}, we can drop the disjointness assumption.

\begin{cor}[Invariance of $\ell^2$-torsion]\label{cor.l2invariant}
Let $M$ be an irreducible and boundary-irreducible compact oriented connected smooth $3$-manifold with empty or toroidal boundary. If $\phi\in \h_2(M, \bd M)$ is a $2$-dimensional homology class and $S$~is a Thurston norm-realizing surface representing~$\phi$, then the quantity~$\tautwo (M\cutalong S,S_-)$ is independent of~$S$, and is thus an invariant of~$\phi$.
\end{cor}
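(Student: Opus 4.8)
The plan is to combine connectedness of the complex~$\T(M, \phi)$ (Theorem~\ref{thm.3mfdcase}) with the invariance statement for disjoint surfaces (Theorem~\ref{thm:independent}); the corollary is essentially the bookkeeping that glues these two results together. The first point to record is that the quantity~$\tautwo(M \cutalong S, S_-)$ depends only on the proper isotopy class of~$S$, and hence defines a function on the vertex set of~$\T(M, \phi)$. Indeed, if $S'$~is properly isotopic to~$S$, the isotopy extension theorem provides an ambient diffeomorphism of~$M$ carrying~$S$ to~$S'$ and the positive side of~$S$ to the positive side of~$S'$; this induces a diffeomorphism of pairs $(M \cutalong S, S_-) \iso (M \cutalong S', S'_-)$, and $\ell^2$-torsion is a diffeomorphism invariant.

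Next, given two Thurston norm-realizing surfaces~$S, T$ representing~$\phi$, Theorem~\ref{thm.3mfdcase} tells us that their isotopy classes are joined by an edge-path $S = [S_0], [S_1], \ldots, [S_m] = T$ in~$\T(M, \phi)$. For each~$i$, the fact that $\{[S_i], [S_{i+1}]\}$~spans an edge means we may choose representatives of these isotopy classes that are disjoint; these representatives are still Thurston norm-realizing, since that property is itself isotopy-invariant. By the isotopy-invariance established in the previous step, we are free to use such disjoint representatives when evaluating our function at~$[S_i]$ and~$[S_{i+1}]$. Theorem~\ref{thm:independent}---whose hypotheses (irreducible, compact oriented connected, empty or toroidal boundary) are part of the standing assumptions of the corollary---then gives $\tautwo(M \cutalong S_i, (S_i)_-) = \tautwo(M \cutalong S_{i+1}, (S_{i+1})_-)$. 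Chaining these equalities along the path yields $\tautwo(M \cutalong S, S_-) = \tautwo(M \cutalong T, T_-)$, as claimed. (The case $\phi = 0$ is trivial, the only Thurston norm-realizing surface then being empty.)

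There is no genuinely hard step here: all the substance lies in Theorems~\ref{thm.3mfdcase} and~\ref{thm:independent}. The only subtlety worth flagging explicitly is the mismatch between the edge relation in~$\T(M, \phi)$, phrased in terms of isotopy classes admitting disjoint representatives, and Theorem~\ref{thm:independent}, phrased for honestly disjoint surfaces; this gap is precisely what the isotopy-invariance of~$\tautwo(M \cutalong S, S_-)$ noted in the first paragraph is there to bridge.
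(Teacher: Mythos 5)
Your argument is essentially identical to the paper's: Theorem~\ref{thm.3mfdcase} supplies a path in~$\T(M, \phi)$, and Theorem~\ref{thm:independent} is applied at each edge. You spell out more carefully than the paper does the (mild but real) point that~$\tautwo(M\cutalong S, S_-)$ must first be seen to depend only on the isotopy class of~$S$, so that disjoint representatives of consecutive vertices can be freely substituted; this is implicit in the paper's terser phrasing, and your explicit treatment is correct.
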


\begin{proof}
Let $T$~be a different choice of Thurston norm-realizing surface for~$\phi$. By Theorem~\ref{thm.3mfdcase} there is a path in the complex~$\Td(M, \phi)$ of Thurston norm-realizing surfaces from~$S$ to~$T$, and we can inductively apply Theorem~\ref{thm:independent} to every two consecutive surfaces in this path.
\end{proof}

We finish by mentioning that Ben Aribi, Friedl and the first author have related this invariant~$\tautwo (M\cutalong S,S_-)$ to the $\ell^2$-Alexander torsion of~$M$ and (the Poincaré dual of)~$\phi$ \cite[Proposition~4.4]{BFH18}. Moreover, it is known by work of Lück and Schick that if the interior of~$M$ admits a complete finite volume hyperbolic metric, then the $\ell^2$-torsion~$\tautwo(M) = \tautwo(M,\emptyset)$ is, up to a multiplicative constant, the hyperbolic volume \cite[Theorem~0.5]{LS99}. The first author has conjectured that a similar relation holds between $\tautwo(M\cutalong S, S_-)$ and the volume of~$M\cutalong S$, when a $S$~is a taut totally geodesic surface \cite[Conjecture~6.7]{He19}.

\printbibliography

\end{document}